\def\MR#1{}
\pgfplotsset{compat=1.17}
\crefname{subsection}{\S\!}{\S\!}
\Crefname{subsection}{Section}{Sections}
\crefname{subappendix}{\S\!}{\S\!}
\Crefname{subappendix}{Section}{Sections}
\crefname{equation}{}{}
\Crefname{equation}{}{}
\DeclareSymbolFont{symbols2}{LS1}{stixfrak}{m}{n}
\DeclareMathSymbol{\typecolon}{\mathbin}{symbols2}{"25}
\theoremstyle{plain}
\newtheorem{thm}{Theorem}
\newtheorem{lem}[thm]{Lemma}
\newtheorem{prop}[thm]{Proposition}
\newtheorem*{prop*}{Proposition}
\newtheorem*{thm*}{Theorem}
\theoremstyle{definition}
\newtheorem{defprop}[thm]{Definition-Proposition}
\newenvironment{exmp}
  {\pushQED{\qed}\examplex}
  {\popQED\endexamplex}
\theoremstyle{remark}
\newenvironment{rem}
  {\pushQED{\qed}\remarkx}
  {\popQED\endremarkx}
\newtheorem*{rem*}{Remark}
\newtheorem*{ack}{Acknowledgements}
\newcommand{\be}{\begin{equation}}    
\newcommand{\ee}{\end{equation}}    
\newcommand{\bmx}{\begin{pmatrix}}    
\newcommand{\emx}{\end{pmatrix}}    
\newcommand{\bmb}{\begin{matrix}}    
\newcommand{\bme}{\end{matrix}}
\newcommand{\bbb}{\!\begin{bmatrix}}    
\newcommand{\bbe}{\end{bmatrix}}    
\newcommand{\ttb}{\!\left[\!\!\left[\begin{matrix}}
\newcommand{\tte}{\end{matrix}\right]\!\!\right]}
\newcommand{\llb}{\hspace{-.15em}\left(\!\!\!\left(\begin{matrix}}
\newcommand{\lle}{\end{matrix}\right)\!\!\!\right)}
\newcommand{\ol}{\overline}    
\newcommand{\del}{\partial}    
\newcommand{\g}{{\mathfrak g}}
\renewcommand{\a}{{\mathfrak a}}
\renewcommand{\b}{{\mathfrak b}}
\newcommand{\B}{{\mathsf B}}
\newcommand{\mf}{\mathfrak}
\newcommand{\mc}{\mathcal}
\newcommand{\half}{\frac{1}{2}}
\newcommand{\nn}{\nonumber}
\newcommand{\8}{{\infty}}
\newcommand{\ZZ}{{\mathbb Z}}
\newcommand{\N}{{\mathbb N}}
\newcommand{\CC}{{\mathbb C}}
\renewcommand{\AA}{{\mathbb A}}
\newcommand{\QQ}{\mathbb{Q}}    
\newcommand{\ket}[1]{{\,\left|#1\right>}\,}
\newcommand{\id}{{\textup{id}}}    
\newcommand{\wh}{\widehat}
\newcommand{\Qc}[2]{\pi_i \circ Q \circ \pi_j}
\newcommand{\A}{\mathcal A}
\newcommand{\goi}[2]{=}    
\newcommand{\Hom}{\mathrm{Hom}}
\newcommand{\on}{.}
\newcommand{\btp}{\begin{tikzpicture}[baseline=0pt,scale=0.9,line width=0.25pt]}    
\newcommand{\etp}{\end{tikzpicture}}
\DeclareMathOperator{\ev}{ev}
\DeclareMathOperator{\Spec}{Spec}
\renewcommand{\O}{\mc O}
\newcommand{\VV}{{\mathbb V}}
\newcommand{\MM}{{\mathbb M}}
\newcommand{\vac}{\!\ket{0}\!}
\DeclareMathOperator{\Ind}{Ind}
\DeclareMathOperator{\End}{End}
\newcommand{\ox}{\mathbin\otimes}
\newcommand{\bul}{\bullet}
\newcommand{\atp}[2]{\underset{\substack{$ $ \\ \tikz{
\draw[-stealth] (0,0) -- (0,.13);} \\[-.2mm] {#1}}}{\smash{#2}}}
\newcommand{\into}{\hookrightarrow}
\newcommand{\onto}{\twoheadrightarrow}
\def\N{N}
\def\B{\mc B}
\newcommand{\op}{\mathrm{op}}
\newcommand{\x}{{\bm x}}
\newcommand{\coinv}{\mathsf F}
\DeclareMathOperator{\Sym}{Sym}
\newcommand{\C}{\mathcal C}
\newcommand{\extp}{\@ifnextchar^\@extp{\@extp^{\,}}}
\def\@extp^#1{\mathop{\bigwedge\nolimits^{\!#1}}}
\newcommand{\hextp}{\@ifnextchar^\@hextp{\@hextp^{\,}}}
\def\@hextp^#1{\mathop{\wh{\bigwedge\nolimits^{\!#1}}}}
\newcommand{\catname}[1]{\mathbf{#1}}
\newcommand{\Operad}[1]{\mathcal #1}
\newcommand{\LieOperad}{\Operad{Lie}}
\newcommand{\ComOperad}{\Operad{Com}}
\newcommand{\AssOperad}{\Operad{As}}
\newcommand{\CAlg}{\catname{Alg}^{\ComOperad}}
\newcommand{\LieAlg}{\catname{Alg}^{\LieOperad}}
\newcommand{\AssAlg}{\catname{Alg}^{\AssOperad}}
\newcommand{\dgCAlg}{\CAlg(\dgVect_\CC)}
\newcommand{\dgVect}{\catname{dgVect}}
\newcommand{\grVect}{\catname{grVect}}
\newcommand{\Vect}{\catname{Vect}}
\newcommand{\RavConf}{\mathrm{RavConf}}
\newcommand{\Conf}{\mathrm{Conf}}
\newcommand{\Mod}{\catname{Mod}}
\newcommand{\dd}{\mathrm{d}}
\newcommand{\Disc}{\mathrm{Disc}}
\newcommand{\Discp}{\mathrm{Disc}^\times}
\newcommand{\Unshf}[2]{\mathrm{Unshf}_{#1}^{#2}}
\DeclareMathOperator{\holim}{holim}
\newcommand{\Cech}{\check C}
\DeclareMathOperator{\Th}{Th}
\newcommand{\iglob}{I_{\mathrm{Global}}}
\newcommand{\iloc}{I_{\mathrm{Ravioli}}}
\newcommand{\gm}{\g_-}
\newcommand{\dfn}[1]{{\color{blue}\emph{#1}}}
\newcommand{\lox}{\otimes^{\mathbb L}}
\newcommand{\Sch}{\catname{Sch}}
\newcommand{\ijN}{_{\substack{1\leq i,j\leq N \\ i\neq j}}}
\newcommand{\ijNp}{_{1\leq i<j\leq N}}
\newcommand{\iotafar}{\iota_{\text{far}}}
\newcommand{\iotanear}{\iota_{\text{near}}}
\newcommand{\iotacob}{\iota_{\!\!\!\substack{\textup{base}\\{\textup{change}}}}}
\newcommand{\sql}{\{\!\{}
\newcommand{\sqr}{\}\!\}}
\newcommand{\Partial}{\mathrm{Partial}}
\newcommand{\Rav}{\mathrm{Rav}}
\renewcommand{\a}{\mf a}
\DeclareMathOperator{\colim}{colim}
\author{Luigi Alfonsi, Hyungrok Kim and Charles Young}
\address{
Department of Physics, Astronomy and Mathematics, University of Hertfordshire, College Lane, Hatfield AL10 9AB, UK.}  \email{l.alfonsi@herts.ac.uk, h.kim2@herts.ac.uk, c.young8@herts.ac.uk} 
\title[Raviolo vertex algebras, cochains and conformal blocks]{Raviolo vertex algebras, cochains\\ and conformal blocks}
\begin{document}

\begin{abstract}
Raviolo vertex algebras were introduced recently by Garner and Williams in \cite{GarneWilliRavioloVertexAlgebras2023}. Working at the level of cochain complexes, in the present paper we construct spaces of conformal blocks, or more precisely their duals, coinvariants, in the raviolo setting. We prove that the raviolo state-field map correctly captures the limiting behaviour of coinvariants as marked points collide.
\end{abstract}


\maketitle
\setcounter{tocdepth}{2}
\tableofcontents

\section{Introduction}
Vertex algebras capture 
the physicists' notion of operator product expansions and the state-field correspondence in chiral conformal field theory in one complex dimension. Since their introduction \cite{BorchVertexAlgebrasKacMoody1986} they have become a powerful and ubiquitous tool in mathematical physics and representation theory. Textbook references include \cite{KacVertexAlgBook,LepowLiIntroductionVertexOperator2004,FrenkelBenZvi}. 

The definition of vertex algebras appears to be closely tied to the special properties of complex dimension one, and specifically of the formal disc $D$ and the punctured formal disc $D^\times = D \setminus\{0\}$,
\[ D = \Spec\CC[[z]],\qquad D^\times = \Spec \CC((z)). \]
Roughly speaking, the punctured formal disc $D^\times$ describes the possible collision geometries of two marked points in the complex plane (one fixed at the origin, the other movable). See \cref{fig: discs and rav}.

What happens in higher dimensions? 
It has long been expected by experts that the language of factorization algebras (as developed in the algebro-geometric setting in \cite{FrancGaitsChiralKoszulDuality2012} following \cite{BDChiralAlgebras} -- cf. also \cite{DeSoKacLieConformalAlgebra2009,BakalDeSoHeluaKacOperadicApproachVertex2019a,BakalDeSoKacComputationCohomologyVertex2021} -- and in the smooth setting in \cite{CG1,CG2}) in principle allows vertex algebras to generalize to higher dimensions. 
In this direction, see especially \cite{zotero-2777}, \cite[\S4]{WilliamsThesis} and \cite{GWHigherKM}, and also \cite{SWW}, all broadly in the smooth setting and using (pre)factorization algebras constructed using the Dolbeault resolution of the holomorphic structure sheaf; and in the complex-algebraic setting see \cite{FaontHenniKapraHigherKacMoody2019}, \cite{HenniKapraGelFandFuchsCohomology2023}, \cite{KapraInfinitedimensionalDgLie2021}. 
Writing down explicit axioms in closed form for higher-dimensional vertex algebras remains a challenge, however.

Recently though, Garner and Williams (\cite{GarneWilliRavioloVertexAlgebras2023} and see also \cite{GarneRaghaWilliHiggsCoulombBranches2023}) have considered a particularly tractable instance of this general problem, namely the case of theories with one real topological dimension and one complex holomorphic dimension, i.e.~theories on spacetimes modelled on \(\mathbb R\times\mathbb C\), such as twists of three-dimensional supersymmetric Yang-Mills theory \cite{elliott2020taxonomy}. Such a spacetime structure can be neatly captured by a transversely holomorphic foliation \cite{DK79,Raw79,Asu10}: that is, a foliation of the spacetime three-manifold by curves such that the leaf space has the structure of a Riemann surface.

In that topological-holomorphic setting, marked points are allowed to collide in the complex plane, but only if, when they do, they are separated in the topological direction. The upshot is that pairwise collisions are no longer described by the formal punctured disc $D^\times$, but rather by the \dfn{formal raviolo} \[\Rav := D \sqcup_{D^\times} D,\] the scheme obtained by gluing together two copies of the formal disc along their common copy of the formal punctured disc. Again, see \cref{fig: discs and rav}, and \cref{the-formal-raviolo} below.

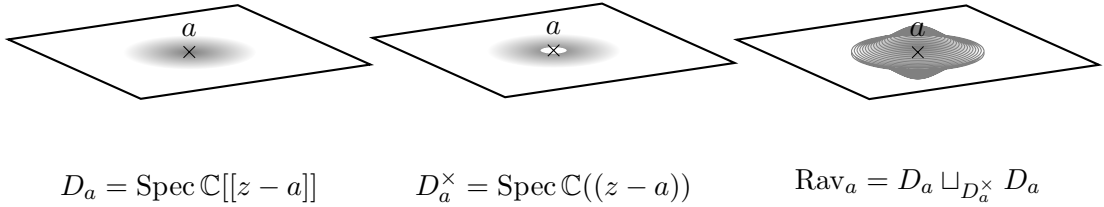
\begin{figure}
  \[
  \begin{tikzpicture}
    \begin{axis}[scale=0.75,axis equal image, hide axis,view={60}{15},clip mode = individual,zmin=-1,zmax=1,width=8cm,height=6cm,xmin=-2,xmax=2,ymin=-2,ymax=2,local bounding box=Da]
      \addplot3 [name path = outer,data cs=polarrad,domain=0:2*pi,draw=none,smooth] (\x,1,0);
      \addplot3 [name path = inner,data cs=polarrad,domain=0:2*pi,draw=none,smooth] (\x,0,0);
      \addplot3 [shading=radial, fill=white, fill opacity=0.5] fill between [of=outer and inner];
      \draw[black,thick] (axis cs:-2,-2,0) -- (axis cs:2,-2,0) -- (axis cs:2,2,0) -- (axis cs:-2,2,0) -- cycle;
      \node[cross out,draw=black, fill=white, inner sep =2,label=above:$a$] at (axis cs:0,0,0) {} ;
    \end{axis}
    \path (Da.south) node[below]{$D_a = \Spec\CC[[z-a]]$};
  \end{tikzpicture}
  \begin{tikzpicture}
    \begin{axis}[scale=0.75,axis equal image, hide axis,view={60}{15},clip mode = individual,zmin=-1,zmax=1,width=8cm,height=6cm,xmin=-2,xmax=2,ymin=-2,ymax=2,local bounding box=Dax]
      \addplot3 [name path = outer,data cs=polarrad,domain=-pi:pi,draw=none,smooth] (\x,1,0);    
      \addplot3 [name path = inner,data cs=polarrad,domain=-pi:pi,draw=none,smooth] (\x,.2,0);
      \addplot3 [shading=radial, fill=white, fill opacity=0.5] fill between [of=outer and inner];
      \draw[black,thick] (axis cs:-2,-2,0) -- (axis cs:2,-2,0) -- (axis cs:2,2,0) -- (axis cs:-2,2,0) -- cycle;
      \node[cross out,draw=black, fill=white, inner sep =2,label=above:$a$] at (axis cs:0,0,0) {} ;
    \end{axis}
    \path (Dax.south) node[below]{$D^\times_a = \Spec\CC((z-a))$};
  \end{tikzpicture}
  \begin{tikzpicture}
    \begin{axis}[scale=0.75,axis equal image, hide axis,view={60}{15},clip mode = individual,
      zmin=-1,zmax=1,width=8cm,height=6cm,xmin=-2,xmax=2,ymin=-2,ymax=2,local bounding box=Rav]
      \addplot3 [name path = lower,data cs=polarrad,fill=white,gray,domain=-pi:pi,y domain=0:1,shading=radial,smooth] 
      (\x,\y,{-0.4*exp(-3*\y^2)});
      \addplot3 [name path = upper,data cs=polarrad,fill=blue,gray,domain=-pi:pi,y domain=0:1,shading=radial,smooth] 
      (\x,\y,{0.4*exp(-3*\y^2)});
      \draw[black,thick] (axis cs:-2,-2,0) -- (axis cs:2,-2,0) -- (axis cs:2,2,0) -- (axis cs:-2,2,0) -- cycle;
      \node[cross out,draw=black, fill=white, inner sep =2,label=above:{\fontsize{12}{14}\selectfont $a$}] at (axis cs:0,0,0) {} ;
    \end{axis}
    \path (Rav.south) node[below]{$\Rav_a = D_a \sqcup_{D^\times_a} D_a$};
  \end{tikzpicture}
  \]
  \caption{\label{fig: discs and rav} Sketch of copies of the formal disc $D$, the formal punctured disc $D^\times$, and the formal raviolo $\Rav$ associated to a point $a$ in the complex plane.}
  \end{figure}

What's so nice about this setting is that, on the one hand, it is a sufficiently mild generalization that it is still possible to write down explicit axioms for the resulting \dfn{raviolo vertex algebras} in a form closely parallel to the usual case -- see \cite{GarneWilliRavioloVertexAlgebras2023} -- while on the other hand it is sufficiently different that it exhibits many of the features expected in the higher setting. Indeed, the formal raviolo $\Rav$ is no longer an affine scheme, unlike the punctured disc $D^\times$. Its structure sheaf has higher cohomology, and one consequence is that in the raviolo vertex algebras of \cite{GarneWilliRavioloVertexAlgebras2023} the usual lowering operators/negative modes disappear from degree zero, and reappear in cohomogical degree one.

\bigskip

Now let us describe the contents of the present paper. We do essentially two things: first, we work at the level of cochain complexes rather than their cohomologies, and second, we introduce notions of configuration space and rational conformal blocks in the raviolo setting. 

In \cite{GarneWilliRavioloVertexAlgebras2023}, the sheaf cohomology $H^\bul(\Rav,\O)$ is regarded as a graded commutative algebra. It plays the role, there, that commutative algebra of functions on the punctured formal disc, $\Gamma(D^\times,\O) = H^0(D^\times,\O) \cong \CC((z))$, plays in the case of standard vertex algebras -- i.e., roughly speaking, it is what organizes the positive and negative modes in the state-field correspondence.
However, the cohomology $H^\bul(\Rav,\O)$ comes endowed with additional higher structure which is  lost in this picture (as the authors of \cite{GarneWilliRavioloVertexAlgebras2023} remark). One way to keep track of that higher structure is to work, instead, with the \dfn{derived global sections} $R\Gamma^\bul(\Rav,\O)$ of the structure sheaf. This latter comes with the structure of a differential graded (dg) commutative algebra, which, via homotopy transfer, encodes all the higher products on its cohomology,
\[ H^\bul(\Rav,\O) \equiv H^\bul(R\Gamma(\Rav,\O)).\]
Thus, for us it will be (a certain model of) $R\Gamma^\bul(\Rav,\O)$ which plays the role of $\Gamma(D^\times,\O)=\CC((z))$ in the usual case. For simplicity we focus exclusively on  the raviolo analogues of the Kac-Moody vertex algebras at level zero. 
We write down in \cref{sec: the raviolo vacuum module in cochain complexes} the definition of the raviolo vacuum module and its state-field map at the level of cochain complexes (i.e. dg vector spaces) rather than graded vector spaces. One immediate consequence is that for us there are lowering operators \emph{both} in degree one (representing cohomology classes) \emph{and} in degree zero; see the discussion in \cref{sec: raviolo state-field map}.

Then our second and main goal is to introduce rational conformal blocks (or more precisely, their duals, rational coinvariants) in the raviolo setting. 
To do that, after reviewing the standard definition of rational coinvariants in \cref{sec: usual rational conformal blocks}, we define in \cref{sec: configuration spaces of ravioli} a notion of \dfn{ravioli configuration space}, $\RavConf_N$, which plays the role of the usual configuration space 
\[ \Conf_N = \AA_\CC^N \setminus \text{diagonals} \] 
for standard vertex algebras. Mirroring the passage from the punctured disc $D^\times$ to the formal raviolo $\Rav$, the ravioli configuration space will be defined by gluing together $N!$ copies of $\AA_\CC^N$ along the complements of their diagonals. The resulting (non-separated) scheme looks locally like $\AA_\CC^N$ everywhere except on diagonals, just like $\Conf_N$. But the diagonals themselves, rather than being removed, instead appear with multiplicity $>1$ (as one expects since, whenever marked points coincide, one has to keep track of their ordering in the topological/leaf direction). 
We introduce a model of the derived space of sections 
\[ \A_N \simeq R\Gamma(\RavConf_N,\O) \]
of the structure sheaf on this configuration space. It is a dg commutative algebra, and it plays the role of the commutative algebra of functions $\B_N := \Gamma(\Conf_N,\O) =\CC[z_i,(z_i-z_j)^{-1}]_{\substack{i,j=1; i\neq j}}^N$ on configuration space in the usual case. 

Our model $\A_N$ is so chosen that it is possible to write down (see \cref{sec: rational ravioli coinvariants}) explicit raviolo analogues of all the constructions reviewed in \cref{sec: usual rational conformal blocks} for the usual case. 
We arrive at the definition of the space -- more precisely, the dg $\A_N$-module -- of \dfn{ravioli coinvariants}
\[ \coinv(\g;\mc A_N; M_1,\dots,M_N).  \] 
(See \cref{sec: induced modules and coinvariants}.)

The main result of the paper is then \cref{thm: raviolo Y map} in \cref{sec: main result}, which shows that the state-field map for the raviolo vacuum module, defined in \cref{sec: the raviolo vacuum module in cochain complexes}, correctly captures the limiting behaviour of ravioli coinvariants as two marked points, each with copies of the vacuum module attached, are brought close together. 

As we discuss at greater length in \cref{sec: raviolo state-field map} below, the limiting behaviour of conformal blocks as two or three marked points collide is arguably what motivates the usual vertex algebra axioms (notably, Borcherds identity), and at any rate those axioms can certainly be reconstructed by considering such limits. \cref{thm: raviolo Y map} establishes an analogous setup in the raviolo case.

\bigskip

The proof of the main theorem, \cref{thm: raviolo Y map}, is given in \cref{sec: proof of thm raviolo Y map}. Finally, in an appendix, 
we recall some background material on semisimplicial sets and the Thom-Sullivan functor.

\bigskip
\begin{ack}
The authors gratefully acknowledge the financial support of the Leverhulme Trust, Research Project Grant number RPG-2021-092. The authors thank Leron Borsten and Charles Strickland-Constable for helpful discussions. CY would like to thank Alexander Schenkel and James Waldron for helpful discussions.
\end{ack}

\section{The raviolo vacuum module in cochain complexes}\label{sec: the raviolo vacuum module in cochain complexes}
\subsection{The formal raviolo}\label{the-formal-raviolo}
The \dfn{formal raviolo},
\be \Rav \coloneq D \sqcup_{D^\times} D, \nn\ee 
is the $\CC$-scheme obtained by gluing two copies of the formal disc $D = \Spec\CC[[z]]$ along their common copy of the formal punctured disc $D^\times = \Spec \CC((z))$.\footnote{$\Rav=D \sqcup_{D^\times} D$ is an infinitesimal analogue of the affine-line-with-a-doubled-origin, $\AA_\CC^1 \sqcup_{\Spec\CC[z,z^{-1}]} \AA_\CC^1$, which is usually pictured like this 
\be
\begin{tikzpicture}[yscale=0.5]
\draw (-4, 0.1) -- (-2,0.1) .. controls (0,0.1) and (-1,1) .. 
(0,1) node[fill,circle,inner sep=.5mm,label=above:$0$] (0) {} 
.. controls (1,1) and (0,0.1) .. (2,0.1) -- (4,0.1);
\draw (-4, -0.1) -- (-2,-0.1) .. controls (0,-0.1) and (-1,-1) .. 
(0,-1) node[fill,circle,inner sep=.5mm,label=below:$0'$] (0) {}  
.. controls (1,-1) and (0,-0.1) .. (2,-0.1) -- (4,-0.1);
\end{tikzpicture}.
\nn\ee
Both are prototypical examples of \emph{nonseparated} schemes obtained by gluing; see for example \cite[I-44]{EisenbudHarris}. Note that $D^\times$ is open in $D$.}

For our purposes, it is best to visualize the affine line $\AA_\CC^1 = \Spec \CC[z]$ as a copy of the complex \emph{plane}. The formal disc, formal punctured disc, and formal raviolo at some given closed point $a\in \CC$ may be pictured as in \cref{fig: discs and rav}. 

\subsection{Functions on the formal raviolo}\label{sec: functions on the formal raviolo}
By definition, then, the formal raviolo $\Rav$ is the pushout in the category of $\CC$-schemes
\be \begin{tikzcd} D^\times \rar\dar & D \dar \\ D \rar & \Rav \end{tikzcd} \nn\ee
or, equivalently, the coequalizer in $\CC$-schemes
\be 
\Rav = \colim\left( \begin{tikzcd} D^\times \rar[shift left]\rar[shift right] & D\sqcup D
\end{tikzcd} \right).  
\nn\ee
This latter is a useful way to think of $\Rav$ because it presents it explicitly as the colimit of a diagram corresponding to a semisimplicial object in affine schemes. (The notion of a semisimplicial object is recalled in \cref{sec: review of thom-sullivan}.)
Namely it is the colimit of the Čech nerve $\Cech(\mathscr U) = \left( \begin{tikzcd} U_1\cap_\Rav U_2 \rar[shift left]\rar[shift right] & U_1\sqcup U_2 \end{tikzcd}\right)$ of the open cover $\mathscr U= \{U_1,U_2\}$ of $\Rav$ by two copies of the formal disc $U_1 \cong U_2 \cong D$ whose intersection in $\Rav$ is, by definition, a copy of the punctured disc, $U_1\cap_\Rav U_2 \cong D^\times$. 
On applying the global sections functor $\Gamma(-,\O)$, one obtains the semi\emph{co}simplicial object $\Gamma(\Cech(\mathscr U),\mc O)$ in commutative algebras. 
Commutative algebras embed in differential graded (dg) commutative algebras. The derived global sections of the structure sheaf $\O$ on $\Rav$ are then given, by definition, by taking homotopy limit in dg commutative algebras,
\begin{align} R\Gamma(\Rav,\mc O) &\coloneq \holim_{\mathbf{dgCAlg}_\CC} \Gamma(\Cech(\mathscr U),\mc O) \nn\\&= \holim_{\mathbf{dgCAlg}_\CC}\left(\CC((z)) \leftleftarrows \CC[[z]]\times\CC[[z]]\right).\nn\end{align}
As we shall recall in more detail in \cref{sec: derived sections and the Thom-Sullivan functor} and \cref{sec: review of thom-sullivan}, the Thom-Sullivan construction provides a means of computing such homotopy limits.  
Namely, we let \(\CC\sql z\sqr\) denote the dg commutative algebra
\begin{align} \CC\sql z \sqr
      &\coloneq \bigl\{\omega \in \CC((z)) \ox \CC[v,\dd v]: \omega|_{v=0}\in \CC[[z]]\text{ and } \omega|_{v=1}\in \CC[[z]] \bigr\}.\nn\end{align} 
and we then have
\[\CC\sql z\sqr \simeq R\Gamma(\Rav,\mc O).\]
This will be our model, in dg commutative algebras, of the derived global sections of the structure sheaf of $\Rav$.\footnote{Any other model of $R\Gamma(\Rav,\O)$ will be related to this one by a zig-zag of quasi-isomorphisms. In particular this should be true of (the local analogue of) the dg commutative algebra $\A$ of \cite[\S1.2]{GarneWilliRavioloVertexAlgebras2023}.} 

Informally one should think of $\CC\sql z\sqr$ as the ``functions on the formal raviolo $\Rav$'', just as the Laurent series $\CC((z))$ are the ``functions on the formal punctured disc $D^\times$''. The fact that $\CC\sql z\sqr$ is nontrivially differential graded is a reflection of the fact that $\Rav$ is not an affine scheme.
In particular the cohomology of $\CC\sql z \sqr$ computes the sheaf cohomology of the structure sheaf $\O$ of $\Rav$. As a graded vector space, this cohomology is given by
\be H^\bul(\CC\sql z\sqr) \cong_{\mathbf{\grVect}_\CC} \begin{cases}\CC[[z]] & \bul=0\\  
 z^{-1} \CC[z^{-1}] & \bul = 1 \\ 0 & \text{otherwise.} 
                            \end{cases}\nn\ee
(The classes in degree one have representatives in $z^{-1} \CC[z^{-1}] \dd v$. Indeed, such one-forms are closed, obviously, but fail to be exact in $\CC\sql z \sqr$ because of the boundary conditions. For example the would-be primitive $z^{-1} v$ is not in $\CC \sql z \sqr$ since it is not regular in $z$ when pulled back to $v=1$.)

The cohomology $H^\bul(\CC\sql z \sqr)$ comes with the structure of a graded commutative algebra. It is isomorphic, as a graded commutative algebra, to the algebra called \(\mc K\), or \(\CC\langle\langle z \rangle\rangle\), in \cite{GarneWilliRavioloVertexAlgebras2023}:
\be  H^\bul(\CC\sql z \sqr) \cong_{\catname{grAlg}_\CC} \mc K \equiv \CC\langle\langle z\rangle\rangle.\nn\ee

The cohomology has, however, \emph{more} structure than that of a graded commutative algebra.
Indeed, $H^\bul(\CC\sql z \sqr)$  gets the structure of a \dfn{minimal $C_\8$-algebra}, coming from homotopy transfer of the dg commutative algebra structure on $\CC\sql z\sqr\simeq R\Gamma(\Rav,\mc O)$ itself. (See e.g. \cite{LodayValleAlgebraicOperads2012} for a discussion of homotopy transfer, and specifically \cite{ChengGetzler} and \cite[\S13.1.9]{LodayValleAlgebraicOperads2012} for the dg commutative case.) In practice, what that means is that $H^\bul(\CC\sql z \sqr)$ is endowed with a family $(c_k)_{k\geq 2}$ of \dfn{higher products}, the first of which, $c_2$ is the binary product of the graded commutative algebra structure. 
One way to keep track of this extra structure is to work in dg commutative algebras rather than passing to their cohomologies.

\subsection{Splitting}\label{sec: splitting}
Let us define the dg commutative algebras
\begin{align} \CC\sql z\sqr_- &\coloneq \left\{\omega \in z^{-1} \CC[z^{-1}]\ox \CC[v,\dd v]: \omega|_{v=0} = 0 \text{ and } \omega|_{v=1} = 0 \right\} \nn\\
\CC\sql z\sqr_+ &\coloneq \CC[[z]] \ox \CC[v,\dd v].\nn\end{align}
Then evidently there are maps of dg commutative algebras
\[ \CC\sql z\sqr_- \into \CC\sql z \sqr \hookleftarrow \CC\sql z\sqr_+ \]
such that at the level of dg vector spaces
\[ \CC\sql z\sqr = \CC\sql z\sqr_- \oplus \CC\sql z\sqr_+. \]
Moreover there are strong deformation retracts of dg vector spaces
\footnote{The map $\iota$ is given by the embedding of $\CC[[w-z_k]] = \CC[[w-z_k]]\ox \CC \xrightarrow{\id \ox 1} \CC[[w-x_k]] \ox \CC[v,\dd v]=\CC\sql w-z_k\sqr_+$, i.e. as ``as constant 0-forms in the $v$ direction''.  The map $(-)|_{\half}$ is given by pulling back forms to (say) $v=\half$. A suitable homotopy $h$ is given by ``$h(\omega)(v) = \int_{\half}^v \omega$'', by which we mean, more precisely, the following: we have $\omega = f(v) + F(v) \dd v$ for some \(f(v), F(v) \in \CC[[w-z_k]] \ox \CC[v]\), and we define $h(f(v) + F(v) \dd v) \coloneq \int_{0}^v  F(v') dv'$.}
\[ \begin{tikzcd}
  \CC\sql z\sqr_+ \arrow[r, shift left=0.75ex, "(-)|_{\half}"] \arrow[loop left,"h"] & \CC[[z]] \cong H^0(\CC\sql z\sqr) \arrow[l, shift left=0.75ex, "\iota"]
  \end{tikzcd}\]
and
\footnote{A suitable homotopy $k$ is given by is given by $k(f(v) + F(v) \dd v) \coloneq \int_{0}^v  F(v') dv' - v \int_0^1 F(v')dv'$. Cf. e.g. \cite[Prop. 16]{AlfonYoungHigherCurrentAlgebras2023a}.}
\[ \begin{tikzcd}
  \CC\sql z\sqr_- \arrow[r, shift left=0.75ex, "\int_0^1"] \arrow[loop left,"k"] & z^{-1}\CC[z^{-1}][1]  \cong H^1(\CC\sql z\sqr)[1]. \arrow[l, shift left=0.75ex, "(-)\dd v"]
  \end{tikzcd}\]

\subsection{Vacuum module}\label{sec: vacuum-module}
Let us now pick a finite-dimensional simple Lie algebra \(\g\) over \(\CC\), for example \(\mf{sl}_2\). We get the dg Lie algebra, i.e. the Lie algebra in dg vector spaces over $\CC$, given by
\[\g \ox \CC\sql z \sqr.\] 
It is the \dfn{raviolo loop algebra}, the raviolo analogue of the usual loop algebra $\g \ox \CC((z))$. It has dg Lie subalgebras \(\g \ox \CC\sql z\sqr_-\) and \(\g \ox \CC\sql z\sqr_+\). We shall think of elements of $\g\ox\CC\sql z\sqr_-$ as \dfn{lowering operators} or \dfn{negative modes}, and of elements of $\g \ox\CC\sql z\sqr_+$ as \dfn{raising operators} or \dfn{positive modes}.
The PBW theorem holds for dg Lie algebras and hence, in view of the splitting above, we have that
\[ U( \g \ox \CC\sql z\sqr) \cong U(\g \ox \CC\sql z \sqr_-) \ox U(\g \ox \CC\sql z \sqr_+) \] 
as dg vector spaces and moreover as $U(\g \ox \CC\sql z \sqr_-), U(\g \ox \CC\sql z \sqr_+)$-bimodules. 
In particular $U( \g \ox \CC\sql z\sqr)$ is free as a  $U(\g \ox \CC\sql z \sqr_-), U(\g \ox \CC\sql z \sqr_+)$-bimodule. 

Let \(\mathscr V\) denote the module over \(\g \ox \CC\sql z\sqr\) induced
\footnote{One can check that 
$\mathscr V$ models the derived tensor product $U(\g \ox \CC\sql z \sqr) \lox_{U(\g \ox \CC\sql z\sqr_+)} \CC \vac$ where \[\lox: D({}_{U(\g \ox \CC\sql z \sqr)}\Mod_{U(\g \ox \CC\sql z\sqr_+)}) \times D({}_{U(\g \ox \CC\sql z \sqr_+)}\Mod) \to D({}_{U(\g \ox \CC\sql z \sqr)}\Mod).\] 
} 
from the trivial one-dimensional module \(\CC\vac\) over \(\g \ox \CC\sql z \sqr_+\): \[ \mathscr V \coloneq U(\g \ox \CC\sql z \sqr) \ox_{U(\g \ox \CC\sql z\sqr_+)} \CC \vac .\]

Following the usual convention for vertex algebras, we call vectors in \(\mathscr V\) \dfn{states}. The representation $\mathscr V$ is the \dfn{raviolo vacuum Verma module (at level zero)}. 

By the PBW decomposition above, there is an isomorphism
\[ \mathscr V \cong  U(\g \ox \CC\sql z \sqr_-) \ox_\CC \CC \vac\] of left \(U(\g \ox \CC\sql z \sqr_-)\)-modules in dg vector spaces.

\subsection{State-field map}\label{sec: raviolo state-field map}
Let \(\mathscr V((x))\) denote the dg vector space of formal Laurent series in $x$ with coefficients in \(\mathscr V\), which one thinks of as a completion of the tensor product \(\mathscr V\ox \CC((x))\). Let \(\mathscr V\sql x \sqr\) denote the following dg vector space, \[\mathscr V\sql x\sqr
      \coloneq \bigl\{\omega \in  \mathscr V((x)) \ox \CC[u,\dd u]: \omega|_{u=0}\in \mathscr V[[x]]\text{ and } \omega|_{u=1}\in \mathscr V[[x]] \bigr\},\]
which we similarly think as a completion of the tensor product \(\mathscr V \ox \CC\sql x \sqr\).
Now we define a \dfn{state-field map}, namely a (degree-zero) map of dg vector spaces\footnote{Here by $\Hom_{\dgVect_\CC}$ we mean the internal Hom in dg vector spaces, just as in the usual definition of the state-field map -- cf. \cref{sec: vacuum verma module in complex dimension one} -- $\Hom_{\Vect_\CC}$ really means the internal Hom in vector spaces.} 
\[ Y(-;x) : \mathscr V \to \Hom_{\mathbf{dgVect}_\CC}(\mathscr V, \mathscr V\sql x\sqr).\]
We do so recursively, as follows. First, we set \[Y(\vac; x) \coloneq \id_\mathscr V.\]
Then for all homogeneous \(X\in \g \ox \CC\sql x\sqr_-\) and all homogeneous states \(B\in \mathscr V\) we define 
\[Y\left(X B ; x \right) \coloneq X_+(x)  Y\left( B ; x \right) + (-1)^{|X||B|} Y\left( B ; x \right) X_-(x)  ,\] 
where it remains to say what \(X_+(x)\) and \(X_-(x)\) are. 
To do so, it is enough to define, for all \(a\in \g\) and \(p(v, \dd v) \in \CC[v,\dd v]\), first
\begin{align} 
  \left( a \ox \frac{p(v,\dd v)}{z} \right)_+(x)  &\coloneq \sum_{k=0}^\8 \left( a\ox \frac{p(v,\dd v)}{z^{k+1}} \right) x^k\nn\\
       \left( a \ox \frac{p(v,\dd v)}{z} \right)_-(x)  &\coloneq \sum_{k=0}^{\8} \left( a\ox z^k \right) \frac{p(1-u,-\dd u)}{x^{k+1}}
\nn\end{align} 
and then, for any \(f(z) \in \CC((z))\),
\[\left( a \ox \frac{\del}{\del z} f(z) p(v,\dd v) \right)_\pm(x) \coloneq \frac{\del}{\del x} \left( a \ox f(z) p(v,\dd v) \right)_\pm(x) .\]
More explicitly, one has the following, by induction. 
\begin{lem}[Explicit formula for the state-field map]\label{lem: state-field recursion}
Given a collection of homogeneous lowering operators $X^i \in \g \ox \CC\sql z\sqr_-$, $i=1,\dots,n$, we have 
\begin{align}
  &Y_\Rav( X^1 \dots X^n \vac,x) = \nn\\
  &\sum_{m=0}^n   \sum_{(\mu,\nu) \in \Unshf m n} 
  (-1)^{n-m + \chi(|X^1|,\dots,|X^n|,\mu,\nu)}
  X^{\mu_1}_+(x) \dots X^{\mu_m}_+(x) X^{\nu_{n-m}}_-(x) \dots X^{\nu_1}_-(x) .\nn
\end{align}
The sum is over unshuffles, i.e. permutations $(\mu_1,\dots,\mu_m,\nu_1,\dots,\nu_{n-m})$ of $(1,\dots,n)$ such that $\mu_1<\dots<\mu_m$ and $\nu_1<\dots<\nu_{n-m}$, and $(-1)^{\chi(|X^1|,\dots,|X^n|,\mu,\nu)}$ is the Koszul sign of an unshuffle of the $X^i$, defined such that
\( (-1)^{\chi(|X^1|,\dots,|X^n|,\mu,\nu)} X^1 \dotsm X^n =  X^{\mu_1} \dotsm X^{\mu_m}
   X^{\nu_{n-m}} \dotsm X^{\nu_1} \)
in the symmetric algebra $\Sym(\g \ox \CC\sql w-z_N\sqr_-)$.  
\qed\end{lem}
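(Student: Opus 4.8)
The plan is to prove the formula by induction on the number $n$ of lowering operators, feeding the recursive definition of $Y(-;x)$ into the PBW factorization $X^1\cdots X^n\vac = X^1\cdot B$ with $B := X^2\cdots X^n\vac$. For the base case $n=0$ the claim reduces to $Y(\vac;x)=\id_{\mathscr V}$, which is the initial clause of the definition; on the right-hand side only the $m=0$ term survives, with empty unshuffle, trivial Koszul sign, and empty operator product equal to $\id_{\mathscr V}$, so the two sides agree.

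For the inductive step I would apply the defining recursion to $X=X^1$ and $B=X^2\cdots X^n\vac$, giving
\[ Y(X^1\cdots X^n\vac;x) = X^1_+(x)\,Y(B;x) + (-1)^{|X^1|\,|B|}\,Y(B;x)\,X^1_-(x), \]
and then substitute the inductive hypothesis for $Y(B;x)$, which is the same sum but ranging over unshuffles of $\{2,\dots,n\}$. The key combinatorial observation is that the unshuffles $(\mu,\nu)$ in $\Unshf m n$ of $\{1,\dots,n\}$ split into two disjoint families according to whether the index $1$ lies in the raising block $\mu$ or the lowering block $\nu$; since $1$ is the minimal index and each block is strictly increasing, it is forced to occur as $\mu_1$ in the first family and as $\nu_1$ in the second. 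Prepending $1$ to $\mu$ (respectively to $\nu$) then sets up a bijection between each family and the full set of unshuffles of $\{2,\dots,n\}$, and I expect that under these bijections the first recursion term $X^1_+(x)\,Y(B;x)$ reproduces exactly the $\mu_1=1$ family, with $X^1_+(x)$ appearing leftmost, while the second term reproduces the $\nu_1=1$ family, with $X^1_-(x)$ landing in the rightmost slot, consistent with the reversed ordering $X^{\nu_{n-m}}_-(x)\cdots X^{\nu_1}_-(x)$ in the statement.

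The hard part will be the sign bookkeeping, and this is where I would expect the real content of the proof to lie. When $1$ is prepended to $\mu$, the element $X^1$ already sits at the front of both $X^1\cdots X^n$ and the target word, so no extra Koszul sign is produced, $m$ increases by one and $n-m$ is unchanged, matching the sign-free first recursion term. When $1$ is instead assigned to $\nu$, the element $X^1=X^{\nu_1}$ must be transported from the front of $X^1\cdots X^n$ to the rightmost slot of the target word in $\Sym(\g\ox\CC\sql w-z_N\sqr_-)$, passing all of $X^2,\dots,X^n$ and generating the Koszul sign $(-1)^{|X^1|\,|B|}$ that appears in the recursion. The delicate point is to verify that this interacts correctly with the prefactor $(-1)^{n-m}$ and with the intrinsic signs built into the definition of $X_-(x)$ through the substitution $p(v,\dd v)\mapsto p(1-u,-\dd u)$, which flips the one-form components of lowering operators. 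Concretely, I would track how one lowering operator at a time is moved into reversed, right-aligned position and confirm that the accumulated Koszul sign $(-1)^\chi$ together with $(-1)^{n-m}$ reproduces the declared total sign $(-1)^{n-m+\chi}$; pinning down the $n=1$ and $n=2$ cases against the recursion first would fix the conventions, after which the bijective argument propagates the identity to all $n$.
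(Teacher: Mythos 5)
Your overall strategy --- induction on $n$ via the defining recursion, splitting the unshuffles of $\{1,\dots,n\}$ according to whether the index $1$ lands in the $\mu$-block or the $\nu$-block, and identifying each family bijectively with the unshuffles of $\{2,\dots,n\}$ --- is exactly the intended argument: the paper offers nothing beyond ``by induction'' for this lemma. Your $\mu$-side analysis (no extra Koszul sign, $m\mapsto m+1$, $n-m$ unchanged) is correct, as is your observation that placing $1$ in $\nu$ produces the Koszul sign $(-1)^{|X^1||B|}$ and puts $X^1_-(x)$ in the rightmost slot.

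However, the sign verification you defer is not a formality: it fails, and it fails for the statement as printed rather than for your plan. Carry out the $n=1$ check you propose: since $|\vac|=0$, the recursion gives
\begin{equation*}
Y_\Rav(X^1\vac;x) \;=\; X^1_+(x)\,\id + \id\,X^1_-(x) \;=\; X^1_+(x)+X^1_-(x),
\end{equation*}
whereas the stated formula gives $X^1_+(x)-X^1_-(x)$, because the $m=0$ term carries $(-1)^{n-m}=(-1)^1$ and $\chi=0$. The same discrepancy recurs at every inductive step: when $1$ is placed in $\nu$ (so $m=m'$, $\nu=(1,\nu')$), the recursion supplies the prefactor $(-1)^{|X^1||B|}(-1)^{(n-1)-m'+\chi'}$, while the claimed formula demands $(-1)^{n-m'+\chi'+|X^1||B|}$ (using $\chi=\chi'+|X^1||B|$, which you identified correctly); these differ by $-1$, once for each element ever placed in the $\nu$-block. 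Consequently your induction closes only for the formula with prefactor $(-1)^{\chi}$ alone, with no factor $(-1)^{n-m}$. Note this is an inconsistency internal to the paper, not a defect of your method: the paper's own worked example concludes $\iota_{z_N\to z_{N-1}}f = (-1)^{|B|}\bigl[\,\cdots\ox\bigl(X_+(x)+X_-(x)\bigr)B\bigr]$, which together with the main theorem forces $Y_\Rav(X\vac;x)=X_+(x)+X_-(x)$, i.e.\ the recursion-consistent ($(-1)^\chi$) version. The spurious $(-1)^{n-m}$ appears to be imported from the analogous classical unshuffle identity in the proof of the classical theorem, where it is genuinely needed because there each near-site expansion $\iotacob\iotanear X_{-k}$ of a global element differs by exactly one sign from the corresponding part of $Y$; in the raviolo case that sign is already accounted for ($\iotacob\iotanear X = -X_-(x)$ with the paper's definition of $X_-$), so it must not reappear in the formula expressed in terms of $X_\pm(x)$. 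So your plan of pinning down conventions at $n=1,2$ is the right move, but what it reveals is that either the lemma's $(-1)^{n-m}$ must be deleted or $X_-(x)$ must be redefined with an extra overall minus; the induction you outline proves the former, corrected statement, not the statement as written.
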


Some remarks are called for about this definition. 

First, one should compare it to that of the usual state-field map for the vacuum Verma module $\VV$ over the loop algebra \(\g \ox \CC((x))\). (Cf. \cref{sec: vacuum verma module in complex dimension one} below.) In some informal but intuitively helpful sense, the former collapses to the latter if one ignores all of the factors $p(v,\dd v)$. 

To understand the role of the polynomial differential form $p(v,\dd v)$, let us consider in turn the examples \[p(v,\dd v) = \dd v\qquad\text{and}\qquad p(v,\dd v) = v(1-v).\] 
The lowering operators 
\[a \ox \frac{\dd v}{z^k}, \qquad a\in \g, \quad k\geq 1,\] 
represent non-trivial cohomology classes in degree one. What we call $\dd v/z^k$ corresponds to what \cite{GarneWilliRavioloVertexAlgebras2023} call $\Omega^{k-1} \propto \lambda^{k-1} \omega$. On such cohomology classes, our definition here coincides with that of \cite[\S2.3]{GarneWilliRavioloVertexAlgebras2023}. (We wrote only a special case of the normal-ordered product above. Cf. \cite[Defn. 2.1.3]{GarneWilliRavioloVertexAlgebras2023}.)
In addition to those lowering operators, there are also lowering operators in degree zero, for example 
\[ a \ox \frac{v(1-v)}{z^k}, \qquad a\in \g,\quad  k\geq 1,\] 
which are neither closed nor exact. This is the first instance of what we meant above by working in dg vector spaces rather than their cohomologies. 

In view of \cite{GarneWilliRavioloVertexAlgebras2023}, we expect that this definition of the state-field map will make the raviolo vacuum module $\mathscr V$ above into an example of what should probably be called a raviolo vertex algebra in dg vector spaces.  It should be possible to spell out suitable axioms for such a structure, following \cite{GarneWilliRavioloVertexAlgebras2023} in the case of raviolo vertex algebras in graded vector spaces and standard references for vertex algebras, \cite{KacVertexAlgBook,FrenkelBenZvi,LepowLiIntroductionVertexOperator2004}.\footnote{Note also that (standard) vertex algebras internal to the category of dg vector spaces have been studied in \cite{CaradJiangLinDifferentialGradedVertex2023,CaradJiangLinDifferentialGradedVertex2023a}, with rather different motivations. Vertex algebras internal to the category of $\ZZ/2\ZZ$-graded vector spaces, i.e. super vertex algebras, are ubiquitous in the literature; see (but also contrast, because the reference deals with supersymmetric vertex algebras in different sense)\cite{HeluaKacSupersymmetricVertexAlgebras2007}. } 

However, in the present paper we want to do something slightly different. 
Let us adopt the standard perspective that the reason the usual vertex algebra axioms (locality, Borcherds identity, etc) are the way they are is that they formalize the behaviour of what physicists would call operator product expansions (OPEs) in chiral conformal field theories (CFTs). 
More precisely, they capture the limiting behaviour of conformal blocks as two or more insertion points, associated to copies of the vacuum module, are brought close together. 
That relationship between vertex algebras and conformal blocks is known to hold 
for algebraic curves in very great generality -- see \cite{FrenkelBenZvi} and references therein, following especially \cite{TsuchUenotYamadConformalFieldTheory}. But in particular it holds in the prototypical setting of conformal blocks in genus zero, i.e. on the complex plane or the Riemann sphere. 

The crucial point, for us, is that in that latter genus zero setting it is well known how to define conformal blocks \emph{without reference} to vertex algebras. Namely, conformal blocks are defined as the duals of rational coinvariants, as we are about to recall in detail, following \cite{FeigiFrenkResheGaudinModelBethe1994} and \cite[\S13.3]{FrenkelBenZvi}.\footnote{This is what we mean by ``conformal blocks''. The term has a number of closely related meanings in the mathematics and theoretical physics literature. See for example the discussion in \cite[\S8]{CostePaqueCelestialHolographyMeets2022}.} 

Consequently, if the axioms of vertex algebras were mysteriously lost, one principled way to \emph{recover} them would be by studying the limiting behaviour of such rational coinvariants. 

The goal of the present paper is to establish the analogous relationship between vertex algebras and coinvariants in the raviolo case. Namely, we shall define a  notion of coinvariants in the raviolo case, and then we shall show that the state-field map we defined above for the raviolo vacuum module emerges naturally from the behaviour of these raviolo coinvariants in the limit in which marked points collide.

\section{Rational coinvariants and conformal blocks}\label{sec: usual rational conformal blocks}
In this section we review the standard definition of rational coinvariants/conformal blocks on the complex plane with punctures. We focus exclusively on conformal blocks associated to an untwisted affine Kac-Moody algebra; that is, in physics language, the chiral sector of a WZW model. Moreover for simplicity we consider only the case of level zero, i.e. we shall work with loop algebras and not their central extensions.\footnote{It is worth recalling that level zero is in certain important senses a generic value. The non-generic value is the \emph{critical level}, $k=-h^\vee$ in standard normalizations, at which for example the usual Sugawara conformal vector of the vacuum Verma module becomes singular.} 

\subsection{Fixed punctures}\label{sec: fixed punctures in one complex dimension}
In this subsection we work over the complex numbers $\CC$. 

Let $a_1,\dots,a_N \in \CC$ be pairwise distinct complex numbers. We think of them as \emph{marked points} or \emph{punctures} in the complex plane.

Let $\CC[w,(w-a_i)^{-1}]'_{1\leq i\leq N}$ denote the (non-unital) commutative $\CC$-algebra of rational expressions in $w$ singular at most at the points $a_1,\dots,a_N$ and vanishing as $w\to \8$. It is a subalgebra of the unital commutative $\CC$-algebra 
\begin{equation}  \Gamma(\AA_\CC^1\setminus \{a_1,\dots,a_N\},\O) = \CC[w,(w-a_i)^{-1}]_{1\leq i\leq N} \nn\end{equation} 
of sections of the structure sheaf $\O$ of the affine line $\AA_\CC^1 = \Spec \CC[w]$ over the Zariski open subset $\AA_\CC^1\setminus \{a_1,\dots,a_N\}$. 

For each puncture $a_i$, $1\leq i\leq N$, we have the algebra of formal series and of formal Laurent series, 
\begin{equation}  \CC[[w-a_i]] = \Gamma(\Disc_1(a_i),\O)\qquad\text{and}\quad\CC((w-a_i)) = \Gamma(\Discp_1(a_i),\O) \label{def: disc1}\end{equation}which are to be thought of as the algebras of regular functions on, respectively, the formal disc $\Disc_1(a_i) = \Spec \CC[[w-a_i]]$ and the formal punctured disc $\Discp_1(a_i) = \Spec\CC((w-a_i)) = \Disc_1(a_i)\setminus\ol{a_i}$ at the closed point $a_i$ of $\AA_\CC^1$. 

There are embeddings of commutative $\CC$-algebras
\begin{subequations}
\label{lrembeddings}
\begin{equation} \CC[w,(w-a_i)^{-1}]'_{1\leq i\leq N} \hookrightarrow \bigoplus_{i=1}^N \CC((w-a_i)) \hookleftarrow \bigoplus_{i=1}^N \CC[[w-a_i]]  \end{equation}
-- on the left, by Laurent-expanding at each of the marked points; on the right, by the canonical embedding summand by summand -- such that, at the level of vector spaces, there is an isomorphism
\begin{equation} \bigoplus_{i=1}^N \CC((w-a_i)) \cong_\CC  \CC[w,(w-a_i)^{-1}]'_{1\leq i\leq N} \oplus \bigoplus_{i=1}^N \CC[[w-a_i]]  . \end{equation}
\end{subequations}
Let us now pick a simple Lie algebra $\g$ over $\CC$. We get Lie algebras over $\CC$
\begin{equation} \b \coloneq \bigoplus_{i=1}^N \g \ox \CC((w-a_i)) \nn\end{equation}
\begin{equation} \b_+ \coloneq \bigoplus_{i=1}^N \g \ox \CC[[w-a_i]],\qquad \b_- \coloneq \g \ox \CC[w,(w-a_i)^{-1}]'_{1\leq i\leq N} \nn\end{equation} 
and embeddings of Lie algebras over $\CC$
\begin{subequations}\label{manin a}
\begin{equation} \b_- \hookrightarrow  \b \hookleftarrow \b_+ \end{equation}
which again give rise to an isomorphism of the underlying vector spaces,
\begin{equation} \b \cong_\CC \b_- \oplus \b_+ .\end{equation}
\end{subequations}

Let $M_i$, $1\leq i\leq N$ be $\g$-modules in the category of $\CC$-vector spaces. 
We make each $M_i$ into a module over the Lie algebra $\g\ox \CC[[w-a_i]]$ by declaring $X \ox 1$ acts as $X$ and $X\ox (w-a_i)^k$ acts as $0$ for all $k\geq 0$ and all $X\in \g$. In other words, we pull back $M_i$ along the map of Lie algebras $\g\ox \CC[[w-a_i]]\to \g\ox \CC[[w-a_i]]\big/ \g\ox (w-a_i)\CC[[w-a_i]] \cong \g $. We may then construct the induced $\g \ox \CC((w-a_i))$ module
\begin{equation} \MM_i \coloneq \Ind_{\g \ox \CC[[w-a_i]]}^{\g \ox \CC((w-a_i))} M_i \coloneq U(\g \ox \CC((w-a_i))) \ox_{U(\g \ox \CC[[w-a_i]])} M_i. \label{def: MMi} \end{equation}
Equivalently, one sees that
\begin{equation} \MM \coloneq \bigotimes_{i=1}^N \MM_i = \Ind_{\b_+}^\b M,\quad\text{where}\quad M\coloneq \bigotimes_{i=1}^N M_i \nn\end{equation}
On pulling back by the embedding $\b_-\into \b$, $\MM$ is, in particular, a module over $\b_-$. 

The space of \dfn{rational coinvariants (at level zero)} associated to these data $a_1,\dots,a_N$; $\g$; $M_1,\dots,M_N$ is then by definition 
\begin{equation} \coinv(\g;a_1,\dots,a_N;M_1,\dots,M_N) \coloneq \MM \big/ \b_- \coloneq \MM \big/(\b_- \on \MM) \cong_\CC \CC \ox_{U(\b_-)} \MM \nn\end{equation}
and the space of \dfn{rational conformal blocks (at level zero)} is by definition the dual space
\begin{equation} \Hom_\CC(\MM \big/\b_-,\CC) \cong_\CC \Hom_{\Mod_\CC(\b_-)}(\MM,\CC). \nn\end{equation}
Here we make $\CC$ into the trivial $\b_-$ module. Equivalently it is the $U(\b_-)$-module obtained by pulling back by the counit map $U(\b_-) \to \CC$.

It follows from \cref{manin a} together with the PBW theorem that there is an isomorphism $U(\b) \cong U(\b_-) \ox_\CC U(\b_+)$ of $(U(\b_-),U(\b_+))$-bimodules. Consequently the space of coinvariants is canonically isomorphic to underlying vector space of the $\g$-module $M=M_1\ox\dots\ox M_N$ from which we began:
\begin{align} \MM \big/ \b_- = \CC \ox_{U(\b_-)} \MM 
&\cong_\CC \CC \ox_{U(\b_-)} U(\b) \ox_{U(\b_+)} M \nn\\ 
&\cong_\CC \CC \ox_{U(\b_-)} U(\b_-) \ox_\CC U(\b_+) \ox_{U(\b_+)} M \nn\\&\cong_\CC \CC \ox_\CC M = M. \nn
\end{align} 
The richness of the space of coinvariants really emerges when one allows the marked points $a_1,\dots,a_N$ to vary.\footnote{One sense in which this is true is that the rational coinvariants/conformal blocks obey the celebrated KZ equations. It would be interesting to investigate whether the coinvariants we introduce below in the ravioli setting obey some analogous equations.} 
We turn to this now.

\subsection{Movable punctures and configuration space}\label{sec: movable punctures in one complex dimension}
Our fixed set of distinct marked points $a_1,\dots,a_N \in \CC$ from the previous subsection is now to be thought of just one choice of closed point of the \dfn{configuration space} 
\begin{equation} \Conf_N \coloneq \AA_\CC^N \setminus \bigcup\ijNp \ol{(z_i=z_j)} \label{def: CN}\end{equation}
obtained by starting with the affine scheme 
$\AA_\CC^N = \Spec \CC[z_1,\dots,z_N]$
and removing (the Zariski closures $\ol{(z_i=z_j)}$ of the generalized points $(z_i=z_j)$ of) all the diagonal hyperplanes. 

Following the approach of \cite[\S13]{FrenkelBenZvi}, one can think that going from fixed to movable marked points is a matter of changing the ground ring from $\CC$ to the $\CC$-algebra $\B_N \coloneq \Gamma(\Conf_N, \O)$ of regular functions on configuration space. Since $\Conf_N$ is the complement in $\AA_\CC^N$ of the zero locus of the function 
\begin{equation} \Delta_N \coloneq \prod\ijNp (z_i-z_j) \nn\end{equation} 
we have by definition -- see e.g. \cite[\S I]{EisenbudHarris} -- that
\begin{subequations}\label{def: BN}
\begin{equation} \B_N = \CC[z_1,\dots,z_N][\Delta_N^{-1}] \end{equation} 
is the localization of $\CC[z_1,\dots,z_N] = \Gamma(\AA_\CC^N,\O)$ obtained by adjoining an inverse to $\Delta_N$, and 
\begin{equation} \Conf_N = \Spec \B_N\nn\end{equation} 
is again an affine scheme. Of course, once we can invert $\Delta_N$, we can invert any $(z_i-z_j)$, so that, in  more suggestive notation, 
\begin{equation} \B_N = \CC[z_i,(z_i-z_j)^{-1}]\ijN. \end{equation}
\end{subequations}

Now we should ask what, in this setting, the analogues of the embeddings of $\CC$-algebras in \cref{lrembeddings} should be. First, in place of $\CC[w,(w-a_i)^{-1}]_{1\leq i\leq N} = \Gamma(\AA_\CC^1\setminus\{a_1,\dots,a_N\},\O)$ we should consider
\begin{equation} \B_{N+1} = \CC[z_1,\dots, z_N,w]\left[\left(\Delta_N \prod_{i=1}^N (w-z_i)\right)^{-1}\right] = \Gamma(\Conf_{N+1},\O). \label{def: BNplus1}\end{equation}
This is a $\B_N$-algebra in the obvious way. Let $\B_{N+1}'$ denote the (non-unital) subalgebra consisting of those functions that vanish as $w\to \8$. 

Next, we want the analogue of the disc $\Disc_1(a_i) = \Spec\CC[[w-a_i]]$ and the punctured disc $\Discp_1(a_i) = \Spec\CC((w-a_i))$ near a closed point $a_i\in \AA_\CC^1$. Recall that $\CC[[w-a_i]]$ is the completion of $\CC[w]$ with respect to the ideal $(w-a_i)\CC[w]$, and $\CC((w-a_i))$ is then the localization of $\CC[[w-a_i]]$ obtained by adjoining an inverse to $w-a_i$. 
Here, since the ground ring is now $\B_N$, we consider the completion of $\B_{N}[w]$ with respect to $(w-z_i) \B_{N}[w]$, i.e. the ring $\B_N[[w-z_i]]$, and then its localization $\B_N((w-z_i))$. 

We arrive at the following analogue of \cref{lrembeddings}: there are embeddings of commutative algebras in $\B_N$-modules
\begin{equation} \B_{N+1}' \hookrightarrow \bigoplus_{i=1}^N \B_N((w-z_i)) \hookleftarrow \bigoplus_{i=1}^N \B_N[[w-z_i]] \label{movablelrembeddings}\end{equation}
such that, at the level of $\B_N$-modules, there is an isomorphism
\begin{equation} \bigoplus_{i=1}^N \B_N((w-z_i)) \cong_{\B_N} \B_{N+1}' \oplus \bigoplus_{i=1}^N \B_N[[w-z_i]]  . \label{movabledecomp}\end{equation}
Let $\g$ be a simple Lie algebra over $\CC$ as before. By extension of scalars we obtain the Lie algebra over $\B_N$, i.e. the Lie algebra in $\B_N$-modules, given by
\begin{equation} \prescript{}\B \g \coloneq \B_N\ox \g.\nn\end{equation} 

We have also Lie algebras in $\B_N$-modules given by
\begin{gather*}
{\prescript{}{\B}{\b}} \coloneq \bigoplus_{i=1}^N \g \ox \B_N((w-z_i)) \\
{\prescript{}{\B}{\b}_+} \coloneq \bigoplus_{i=1}^N \g \ox \B_N[[w-z_i]],\qquad {\prescript{}{\B}{\b}_-} \coloneq \g \ox \B_{N+1}'
\end{gather*} 
and embeddings of Lie algebras in $\B_N$-modules
\begin{subequations}\label{manin b}
\begin{equation} {\prescript{}{\B}{\b}_-} \hookrightarrow  {\prescript{}{\B}{\b}} \hookleftarrow {\prescript{}{\B}{\b}_+} \end{equation}
which give rise to an isomorphism of the underlying $\B_N$-modules,
\begin{equation} {\prescript{}{\B}{\b}} \cong {\prescript{}{\B}{\b}_-} \oplus {\prescript{}{\B}{\b}_+} .\end{equation}
\end{subequations}

Let $M_i$, $1\leq i\leq N$ be $\g$-modules as before. By extension of scalars we get $\prescript{}\B \g$-modules $\prescript{}\B M_i\coloneq\B_N \ox M_i$, i.e. $\prescript{}\B \g$-module objects in the category of $\B_N$-modules. 
We make each $\prescript{}\B M_i$ into a module over the Lie algebra $\g\ox \B_N[[w-z_i]]$ by pulling back along the map of Lie algebras $\g\ox \B_N[[w-z_i]]\to \g\ox \B_N[[w-z_i]]\big/ \g\ox (w-z_i)\B_N[[w-z_i]] \cong \prescript{}\B \g $. We may then construct the induced $\g \ox \B_N((w-z_i))$-module
\begin{align} \prescript{}{\B}\MM_i &\coloneq \Ind_{\g \ox \B_N[[w-z_i]]}^{\g \ox \B_N((w-z_i))} \prescript{}\B M_i\nn\\
 &\coloneq U_{\B_N}(\g \ox \B_N((w-z_i))) \ox_{U_{\B_N}(\g \ox \B_N[[w-z_i]])} \prescript{}\B M_i. \label{def: BMMi}
\end{align}
(Here we write $U_{\B_N}(-): \LieAlg(\Mod_{\B_N}) \to \AssAlg(\Mod_{\B_N})$ for the functor whose action on objects is to take a Lie algebra over $\B_N$ to its universal envelope, an associative algebra over $\B_N$.)

The tensor product of these modules,
\begin{equation} \prescript{}{\B}\MM \coloneq \bigotimes_{i=1}^N{}_{\B_N} \prescript{}{\B}\MM_i,\nn\end{equation}
is equivalently the induced module
\begin{equation} \prescript{}{\B}\MM = \Ind_{{\prescript{}{\B}{\b}_+}}^{\prescript{}{\B}{\b}} \prescript{}\B M \coloneq U_{\B_{N}}(\b) \ox_{U_{\B_{N}}({\prescript{}{\B}{\b}_+})} \prescript{}\B M, \quad\text{where}\quad \prescript{}\B M\coloneq \bigotimes_{i=1}^N{}_{\B_N} \prescript{}\B M_i .\nn\end{equation}
On pulling back by the embedding ${\prescript{}{\B}{\b}_-}\into {\prescript{}{\B}{\b}}$, $\prescript{}{\B}\MM$ is, in particular, a module over ${\prescript{}{\B}{\b}_-}$. 

The space of \dfn{rational coinvariants (at level zero)} associated to these data $\g$; $M_1,\dots,M_N$ is then by definition the $\B_N$-module
\begin{align} \coinv(\g;\B_N;M_1,\dots,M_N) \coloneq  \prescript{}{\B}\MM \big/ {\prescript{}{\B}{\b}_-} &\coloneq \prescript{}{\B}\MM \big/({\prescript{}{\B}{\b}_-} \on \prescript{}{\B}\MM) \nn\\ 
&\cong_{\B_N} \B_N \ox_{U_{\B_N}({\prescript{}{\B}{\b}_-})} \prescript{}{\B}\MM \label{BN coinvariants}\end{align}
and the space of \dfn{rational conformal blocks (at level zero)} is by definition the dual
\begin{equation} \Hom_{\B_N}(\prescript{}{\B}\MM \big/ {\prescript{}{\B}{\b}_-},\B_N) \cong_{\B_N} \Hom_{{\prescript{}{\B}{\b}_-}}({\prescript{}{\B}\MM},\B_N). \nn\end{equation}
Here we make $\B_N$ into the trivial ${\prescript{}{\B}{\b}_-}$-module. Equivalently it is the $U_{\B_N}({\prescript{}{\B}{\b}_-})$-module obtained by pulling back by the counit map $U_{\B_N}({\prescript{}{\B}{\b}_-}) \to \B_N$.

Once more, the data of the triple of Lie algebras \cref{manin b} together with the PBW theorem imply that there is an isomorphism, $U_{\B_N}(\b) \cong U_{\B_N}({\prescript{}{\B}{\b}_-}) \ox_{\B_N} U_{\B_N}({\prescript{}{\B}{\b}_+})$, now of $(U_{\B_N}({\prescript{}{\B}{\b}_-}),U_{\B_N}({\prescript{}{\B}{\b}_+}))$-bimodules in $\B_N$-modules, and hence that the space of coinvariants is canonically isomorphic to $\prescript{}\B M = \B_N \ox M$:
\begin{align} \prescript{}{\B}\MM \big/ {\prescript{}{\B}{\b}_-} &= \B_N \ox_{U_{\B_N}({\prescript{}{\B}{\b}_-})} \prescript{}{\B}\MM \nn\\
&\cong_{\B_N} \B_N \ox_{U_{\B_N}({\prescript{}{\B}{\b}_-})} U_{\B_N}(\b) \ox_{U_{\B_N}({\prescript{}{\B}{\b}_+})} \prescript{}\B M \nn\\ 
&\cong_{\B_N} \B_N \ox_{U_{\B_N}({\prescript{}{\B}{\b}_-})} U_{\B_N}({\prescript{}{\B}{\b}_-}) \ox_{\B_N} U_{\B_N}({\prescript{}{\B}{\b}_+}) \ox_{U_{\B_N}({\prescript{}{\B}{\b}_+})} \prescript{}\B M \nn\\ &\cong \B_N \ox_{\B_N} \prescript{}\B M = \prescript{}\B M = \B_N \ox M. \label{floating coinvariants}
\end{align}

\subsection{Taking coinvariants}\label{sec: taking coinvariants in dimension one}
For any $\CC$-point $(a_1,\dots,a_N) \in \Conf_N$ we can apply the evaluation map 
\begin{equation} \ev_{a_1,\dots,a_N} : \B_N \to \CC \nn\end{equation}
to recover the space of coinvariants for this particular choice of fixed marked points, as we had it in \cref{sec: fixed punctures in one complex dimension}. That is, the constructions above are all suitably functorial, so that there is a map
\begin{equation} \ev_{a_1,\dots,a_N} : \coinv(\g;\B_N;M_1,\dots,M_N) \to \coinv(\g;a_1,\dots,a_N;M_1,\dots,M_N). \nn\end{equation}
In checking this, one notes in particular the following lemma.
\begin{lem}\label{lem: MM base change}
For each $i$, $\prescript{}{\B}\MM_i \cong \B_N \ox_\CC \MM_i$. Hence $\prescript{}{\B}\MM \cong \B_N \ox_\CC \MM$. 
\end{lem}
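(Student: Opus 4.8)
The plan is to recognise both $\prescript{}{\B}\MM_i$ and $\B_N\ox_\CC\MM_i$ as induced modules and to exploit the fact that, after the appropriate PBW reduction, induction only ``sees'' the strictly-negative modes at the $i$-th puncture, on which base change along $\CC\to\B_N$ is completely transparent.

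First I would record the local triangular decompositions at the $i$-th puncture. Over $\CC$, the vector-space splitting $\CC((w-a_i)) = \CC[[w-a_i]]\oplus(w-a_i)^{-1}\CC[(w-a_i)^{-1}]$ exhibits $\g\ox\CC((w-a_i))$ as the direct sum of the two Lie subalgebras $\g\ox\CC[[w-a_i]]$ and $\g^-_i\coloneqq\g\ox(w-a_i)^{-1}\CC[(w-a_i)^{-1}]$ (that $\g^-_i$ is closed under the bracket is the computation $[X\ox t^{-m},Y\ox t^{-n}]=[X,Y]\ox t^{-m-n}$ with $m,n\geq 1$); over $\B_N$ one has likewise $\g\ox\B_N((w-z_i)) = (\g\ox\B_N[[w-z_i]])\oplus\prescript{}{\B}\g^-_i$ with $\prescript{}{\B}\g^-_i\coloneqq\g\ox(w-z_i)^{-1}\B_N[(w-z_i)^{-1}]$. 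Applying the PBW theorem to each decomposition gives bimodule isomorphisms $U(\g\ox\CC((w-a_i)))\cong U(\g^-_i)\ox_\CC U(\g\ox\CC[[w-a_i]])$ and $U_{\B_N}(\g\ox\B_N((w-z_i)))\cong U_{\B_N}(\prescript{}{\B}\g^-_i)\ox_{\B_N}U_{\B_N}(\g\ox\B_N[[w-z_i]])$. Substituting these into the definitions \eqref{def: MMi} and \eqref{def: BMMi} collapses the tensor products over the positive subalgebras, leaving $\MM_i\cong U(\g^-_i)\ox_\CC M_i$ and $\prescript{}{\B}\MM_i\cong U_{\B_N}(\prescript{}{\B}\g^-_i)\ox_{\B_N}\prescript{}\B M_i$.

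The key observation is then that $\prescript{}{\B}\g^-_i$ really is the honest extension of scalars of $\g^-_i$: since $(w-z_i)^{-1}\B_N[(w-z_i)^{-1}]$ consists of \emph{polynomials} in $(w-z_i)^{-1}$ with coefficients in $\B_N$ and no constant term, renaming the formal variable gives an isomorphism $\prescript{}{\B}\g^-_i\cong\B_N\ox_\CC\g^-_i$ of Lie algebras in $\B_N$-modules. Because the universal envelope commutes with extension of scalars, $U_{\B_N}(\B_N\ox_\CC\g^-_i)\cong\B_N\ox_\CC U(\g^-_i)$. Combining this with $\prescript{}\B M_i=\B_N\ox_\CC M_i$ and the elementary identity $(\B_N\ox_\CC V)\ox_{\B_N}(\B_N\ox_\CC W)\cong\B_N\ox_\CC(V\ox_\CC W)$ yields $\prescript{}{\B}\MM_i\cong\B_N\ox_\CC(U(\g^-_i)\ox_\CC M_i)\cong\B_N\ox_\CC\MM_i$, which is the first claim. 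The ``hence'' follows by applying this factor by factor to $\prescript{}{\B}\MM=\bigotimes_{i=1}^N{}_{\B_N}\prescript{}{\B}\MM_i$ and using the same identity to pull the single copy of $\B_N$ outside all $N$ factors, giving $\prescript{}{\B}\MM\cong\B_N\ox_\CC\bigotimes_{i=1}^N\MM_i=\B_N\ox_\CC\MM$.

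The one genuine subtlety — and the step I would be most careful about — is exactly why one cannot argue more crudely that ``induction commutes with base change'' applied directly to the full loop algebras. The obstruction is that $\g\ox\B_N((w-z_i))$ is a \emph{completion} and is strictly larger than the naive base change $\B_N\ox_\CC(\g\ox\CC((w-a_i)))$, since a Laurent series over $\B_N$ may have infinitely many terms of positive degree. It is precisely the PBW reduction to the strictly-negative modes that sidesteps this, because the negative part is polynomial rather than power-series, so no completion intervenes and base change is exact there. I would therefore invoke the PBW bimodule splittings \emph{before} any identification with an extension of scalars, and check that $\prescript{}{\B}\g^-_i\cong\B_N\ox_\CC\g^-_i$ is an isomorphism of Lie algebras, not merely of $\B_N$-modules, so that it passes to universal envelopes.
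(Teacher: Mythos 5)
Your proof is correct and takes essentially the same route as the paper's: the paper likewise defines the polynomial negative-mode subalgebras $\gm$ and $\prescript{}\B \gm$, observes $\prescript{}\B \gm \cong \B_N \ox_\CC \gm$ (``since these are just Lie algebras of polynomials''), and uses the local splitting together with PBW to exhibit $\prescript{}{\B}\MM_i \cong U_{\B_N}(\prescript{}\B \gm) \ox_{\B_N} \prescript{}\B M_i$ and $\MM_i \cong U(\gm)\ox_\CC M_i$, from which the result follows. Your closing remark about why the argument must route through the polynomial negative modes rather than the completed loop algebra makes explicit a point the paper leaves implicit, but it is the same proof.
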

\begin{proof}
Let us define 
\begin{equation} \gm \coloneq (w-z_N)^{-1}\CC[(w-z_N)^{-1}]\quad\text{and}\quad \prescript{}\B \gm \coloneq (w-z_N)^{-1}\B_N[(w-z_N)^{-1}].\nn\end{equation} 
These are Lie algebras over $\CC$ and $\B_N$ respectively. Certainly we have 
\begin{equation}  \prescript{}\B \gm \cong  \B_N \ox_\CC \gm \label{gm isom}\end{equation} 
(since these are just Lie algebras of polynomials) and hence that $U_{\B_N}(\prescript{}\B \gm) \cong \B_N \ox_\CC U(\gm)$. 
We have the ``local'' Lie algebra splitting, i.e. the embeddings of Lie algebras in $\B_N$-modules
\begin{equation} \prescript{}\B \gm  \hookrightarrow \g \ox \B_N((w-z_N)) \hookleftarrow \g \ox \B_N[[w-z_N]] \nn\end{equation}
such that as $\B_N$-modules
\begin{equation} \prescript{}\B \gm \oplus \g \ox \B_N[[w-z_N]] \cong \g \ox \B_N((w-z_N)).\nn\end{equation}
Therefore $\prescript{}{\B}\MM_i$ is free as a module over $U_{\B_N}(\gm)$:
\begin{equation} \prescript{}{\B}\MM_i \cong U_{\B_N}(\prescript{}\B \gm) \ox_{\B_N} \prescript{}\B M_i \nn\end{equation} 
The analogous statements hold for $\gm$ and $\MM_i$. The result follows.
\end{proof}

On the other hand, we can now consider fixing vectors $m_i \in \MM_i$, for $1\leq i\leq N$, while letting the marked points vary. More precisely, the unit map $1: \CC \to \B_N$ induces embeddings (of vector spaces) 
\begin{equation} \MM_i \into \B_N \ox \MM_i \cong \prescript{}{\B}\MM_i;\quad m \mapsto 1 \ox m\label{MMi into BMMi}\end{equation} 
for each $i$, and hence $\MM\into \B_N \ox \MM \cong \prescript{}{\B}\MM$.
We may take the class of the vector $m_1\ox \dots \ox m_N \in \MM \cong 1\ox \MM \into \B_N \ox \MM \cong \prescript{}{\B}\MM$ in the space of coinvariants in \cref{floating coinvariants}. We shall write this class as
\begin{equation} \bigl[ \atp{z_1}{m_1} \ox \dots \ox \atp{z_N}{m_N} \bigr].\nn\end{equation}
It is an element of $\B_N \ox M$, i.e. an $M = \bigotimes_{i=1}^N M_i$-valued rational function of $z_1,\dots,z_N$ singular at most on the diagonals $z_i-z_j$, $1\leq i<j\leq N$. 

We call applying this map of $\B_N$-modules 
\begin{equation}\MM \to \B_N \ox M;\quad m_1 \ox \dots \ox m_N \mapsto \bigl[ \atp{z_1}{m_1} \ox \dots \ox \atp{z_N}{m_N} \bigr],\label{eq: taking coinvariants}\end{equation}
the operation of \dfn{taking coinvariants}. 


\subsection{The usual vacuum Verma module and state-field map}\label{sec: vacuum verma module in complex dimension one}
Let now
\begin{equation} \VV \coloneq \Ind_{\g\ox \CC[[s]]}^{\g \ox \CC((s))} \CC\vac \nn\end{equation}
denote the module over the loop algebra $\g \ox \CC((s))$ induced from the trivial one-dimensional module $\CC\vac$ over $\g\ox \CC[[s]]$ generated a vector $\vac$.
This module $\VV$ is called the \dfn{vacuum Verma module (at level zero)}. 

The vacuum Verma module $\VV$ 
comes equipped with a linear map 
\begin{equation} Y(-, s) : \VV \to \Hom_{\Vect_\CC}(\VV, \VV((s))) ,\qquad A \mapsto Y(A, s) = \sum_{n \in \ZZ} A_{(n)} s^{-n-1} ,\label{Ymap}\end{equation}
called the \dfn{state-field map}. Vectors in $\VV$ are called \dfn{states}, and one can think that $Y(-,s)$ sends each state $A\in \VV$ to the formal sum of its \dfn{modes} $A_{(n)} \in \End_{\Vect_\CC}(\VV)$. The state-field map satisfies certain axioms (notably Borcherds identity) which can be found in standard references including  \cite{KacVertexAlgBook,LepowLiIntroductionVertexOperator2004,FrenkelBenZvi} and which make $\VV$ into a \dfn{vertex algebra}. 
From our present perspective the important point is that one way to motivate these axioms is by studying the limit of rational coinvariants as points collide, as we now describe. 

Let us now specialize our discussion of rational coinvariants above to the case in which $M_{N-1} = \CC\vac$ and $M_N = \CC\vac$. 
In that case
\begin{equation} M = \bigotimes_{i=1}^{N-2} M_i \ox \CC \ox \CC = \bigotimes_{i=1}^{N-2} M_i\qquad\text{and}\qquad \MM = \bigotimes_{i=1}^{N-2} \MM_i \ox \VV_{N-1} \ox \VV_N. \nn\end{equation}
Now, by identifying local coordinates $w-z_i$, we may identify each of the Lie algebras $\g\ox \CC((w-z_i))$ with a single copy $\g\ox \CC((s))$,\footnote{This identification is sometimes left implicit, but is an important assumption. Other ways of picking preferred local coordinates and hence identifying copies of $\VV$ at different points are possible and can be important in applications: for example when one wants to go to what physicists would call the cylinder geometry.} 
and thereby identify their vacuum Verma modules with a single abstract copy of $\VV$:
\begin{equation} \VV\cong \VV_i \coloneq \Ind_{\g\ox \CC[[w-z_i]]}^{\g \ox \CC((w-z_i))} \CC\vac. \nn\end{equation}
Pick vectors $m_i\in \MM_i$ for $1\leq i\leq N-2$, and states $A,B\in \VV$ in this abstract copy of $\VV$. On taking coinvariants, we get the $M$-valued rational function
\begin{align} 
\bigl[ \atp{z_1}{m_1} \ox \dots \ox \atp{z_{N-2}}{m_{N-2}} \ox \atp{z_{N-1}}{B} \ox \atp{z_N}{A} \bigr] \in \B_N \ox M. \label{coinvariant}
\end{align}

The usual state-field map captures the behaviour of this function in the limit in which the marked point $z_N$ becomes close to marked point $z_{N-1}$, while the points $z_1,\dots,z_{N-1}$, the vectors $m_1,\dots,m_{N-2}$, and the states $A, B$ are all held fixed.

\begin{thm}[Relation of the state-field map $Y$ to rational coinvariants]\label{prop: Y map}
For all $A,B\in \VV$ and $m_i\in \MM_i$, $1\leq i\leq N-2$, we have 
\begin{align} &\iota_{z_{N}\to z_{N-1}}
\bigl[ \atp{z_1}{m_1} \ox \dots \ox \atp{z_{N-2}}{m_{N-2}} \ox \atp{z_{N-1}}{B} \ox \atp{z_N}{A} \bigr] \nn\\
&\quad\qquad
= 
\bigl[ \atp{z_1}{m_1} \ox \dots \ox \atp{z_{N-2}}{m_{N-2}} \ox  Y(A;z_N-z_{N-1}) \!\!\atp{z_{N-1}}{B} \bigr]. 
\nn\end{align}
\end{thm}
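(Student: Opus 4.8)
The plan is to prove the identity by induction on the number of negative modes defining the state $A$, using the recursive (normal-ordering) description of the state-field map $Y$ and a single structural input. By $\B_N$-linearity of the coinvariants (see \cref{floating coinvariants,eq: taking coinvariants}) and of $Y$ in \cref{Ymap}, and by the PBW presentation $\VV\cong U\!\left(\g\ox(w-z_N)^{-1}\CC[(w-z_N)^{-1}]\right)\vac$, it suffices to treat $A = X^1\dotsm X^r\vac$ with each $X^j = a_j\ox(w-z_N)^{-k_j}$ a negative mode at $z_N$, leaving $B\in\VV_{N-1}$ general. The one structural input is that such a mode is in fact a \emph{global} section: $(w-z_N)^{-k}$ is regular away from $z_N$ and vanishes as $w\to\8$, so $a\ox(w-z_N)^{-k}\in\g\ox\B_{N+1}' = {\prescript{}{\B}{\b}_-}$ and therefore acts by zero in the coinvariants. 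By the Leibniz rule this yields the \emph{redistribution relation}
\[ \bigl[\dots\ox\atp{z_{N-1}}{B}\ox\atp{z_N}{(a\ox(w-z_N)^{-k})C}\bigr] = -\sum_{i=1}^{N-1}\bigl[\dots\ox\atp{z_i}{g^{(i)}}\ox\dots\ox\atp{z_N}{C}\bigr], \]
where $g^{(i)}$ is the action of $a\ox(w-z_N)^{-k}$ through its Laurent expansion at $z_i$ (\cref{movablelrembeddings}); since the only pole is at $z_N$, each expansion at $z_i$ ($i\neq N$) is a power series, i.e.\ a sum of raising operators.

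The base case $A=\vac$ is the propagation-of-vacua statement that an extra vacuum insertion leaves the coinvariant unchanged, which matches $Y(\vac;x)=\id$. For the inductive step I peel off the outermost mode $X^1 = a\ox(w-z_N)^{-k}$ of $A = X^1 C$ using the redistribution relation and split the sum into $i=N-1$ and $i\leq N-2$. The $i=N-1$ term feeds the expansion of $a\ox(w-z_N)^{-k}$ at $z_{N-1}$, a sum of raising operators acting on $B$ whose coefficients are explicit powers of $(z_N-z_{N-1})$; pulling these powers out and applying the inductive hypothesis with $B$ replaced by $a_{(j)}B$ reproduces exactly the annihilation term $Y(C;s)\,(\partial_s^{k-1}a(s))_-$ applied to $B$ in the recursion $Y(a_{(-k)}C;s)=\tfrac1{(k-1)!}\bigl((\partial_s^{k-1}a(s))_+\,Y(C;s)+Y(C;s)(\partial_s^{k-1}a(s))_-\bigr)$, where $a(s)_\pm$ denote the creation/annihilation parts of $a(s)=\sum_n a_{(n)}s^{-n-1}$.

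The $i\leq N-2$ terms are where the real work lies, and matching them to the creation term $(\partial_s^{k-1}a(s))_+\,Y(C;s)$ is the main obstacle. Here the emergent insertion at slot $i$ is $(z_i-z_N)^{-k}(a\cdot m_i)$, whose dependence on $z_N$ is \emph{not} through explicit powers of $(z_N-z_{N-1})$, so the inductive hypothesis does not apply directly. The remedy is to commute $\iota_{z_N\to z_{N-1}}$ past the redistribution by first expanding
\[ \frac{1}{(z_i-z_N)^{k}} = \sum_{p\geq 0}\tbinom{k+p-1}{p}\,(z_N-z_{N-1})^{p}\,(z_i-z_{N-1})^{-k-p}, \]
turning each slot-$i$ insertion into $(z_i-z_{N-1})^{-k-p}(a\cdot m_i)$, which is $z_N$-independent, times an explicit power of $(z_N-z_{N-1})$; the prefactor $(z_i-z_{N-1})^{-k-p}$ lies in the new ground ring $\B_{N-1}$, so the inductive hypothesis applies $\B_{N-1}$-linearly. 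Reassembling the result, the creation modes produced by $Y$ at $z_{N-1}$ redistribute onto the lower slots with denominators $(z_i-z_{N-1})$, and agreement with the directly expanded left-hand side reduces to the convolution identity $\sum_{q+m=p}\tbinom{k+q-1}{q}\tbinom{l+m-1}{m}=\tbinom{k+l+p-1}{p}$ and its multi-mode analogues --- precisely the associativity statement that expanding at $z_N$ and then in $(z_N-z_{N-1})$ agrees with expanding directly at $z_{N-1}$. Tracking this bookkeeping uniformly in the number of modes of $C$, together with the commutation of $\iota_{z_N\to z_{N-1}}$ just described, closes the induction.
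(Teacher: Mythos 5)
Your proposal is correct and rests on exactly the same structural inputs as the paper's own proof: that negative modes at $z_N$ extend to global sections in $\g\ox\B_{N+1}'$ and hence can be redistributed onto the other marked points inside coinvariants, propagation of vacua as the base case, the commutation of the expansion maps $\iota_{w\to z_i}$ and $\iota_{z_N\to z_{N-1}}$ for $i\leq N-2$ (your binomial/convolution identities are just this commutation made explicit), and smoothness to keep the coefficient of each power of $z_N-z_{N-1}$ a finite sum. The only difference is organizational --- the paper swaps all $n$ modes at once via the unshuffle formula \cref{swapping} and then matches the closed normal-ordered expression for $Y$, whereas you peel one mode per inductive step so as to mirror the recursion defining $Y$ --- and the two bookkeepings are equivalent.
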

We give a proof of this very standard fact below, in \cref{sec: proof: Y map}.
Note that while the left-hand side here is manifestly in $\B_{N-1}((z_N-z_{N-1})) \ox M$, since it is the expansion of an element of $\B_N\ox M$, a priori the right-hand side is merely an element of $(\B_{N-1}\ox M)((z_N-z_{N-1}))$: indeed what is meant by the right-hand side is the series obtained by computing the coinvariant in $\coinv(\g;\B_{N-1};M_1,\dots,M_{N-2},\CC) \cong \B_{N-1} \ox M$ order by order in $z_N-z_{N-1}$.

Now let us close this digression on the usual rational conformal blocks associated to  Kac-Moody algebras at level zero, and return to the  raviolo case.

\section{Ravioli configuration space}\label{sec: configuration spaces of ravioli}

As we just saw, 
the usual rational coinvariants/conformal blocks form a module over the commutative algebra
\begin{align} \B_N &\coloneq \Gamma(\Conf_N,\O) = \CC[z_1,\dots,z_N][\mathrm{Discr}_N^{-1}]\nn\\ &= \CC[z_i,(z_i-z_j)^{-1}]\ijN \nn\end{align} 
of global sections of the structure sheaf on the configuration space of $N$ distinguishable pairwise distinct marked points in the complex plane,
\begin{equation} \Conf_N \coloneq \AA_\CC^N \setminus \bigcup\ijNp \ol{(z_i=z_j)} \nn.\end{equation}

\begin{rem}\label{rem: bundles}
In more geometrical language, the module of conformal blocks is the $\Gamma(\Conf_N,\O)$-module of global sections of a trivial vector bundle over configuration space, whose typical fibre we described in \cref{sec: fixed punctures in one complex dimension}; see \cite[\S13.3]{FrenkelBenZvi}. Cf. also e.g. \cite{VarchenkoKZlectures}, \cite{EFK}.
\end{rem}

In this section our goal is to introduce a suitable notion of \dfn{configuration space in the ravioli setting}, \[\RavConf_N,\] and then to construct a model 
\[ \A_N \simeq R\Gamma(\RavConf_N,O), \]
in dg commutative algebras, of the derived global sections of its structure sheaf.
This is a prelude to defining ravioli analogues of rational coinvariants/conformal blocks, which we shall do in the next section.

In the ravioli setting, we again want to describe configurations of $N$ distinguishable marked points in the complex plane. However, we now want to allow  them to coincide, but with the stipulation that, whenever two points \emph{do} coincide, we want to keep track of which point is ``on top'' of which. 

Formally then, what we shall do is to glue together $N!$ copies of the affine scheme $\AA_\CC^N$ along the complements of the diagonal hyperplanes, as follows. 

Let $S_N$ denote the group of permutations of the set $[1,N] = \{1,\dots,N\}$. We shall identify $S_N$, as a set, with the set of \dfn{total orders} on $[1,N]$, by associating $\sigma\in S_N$ with the total order $\prec_\sigma$ on $[1,N]$ defined by
\[ \sigma(1) \prec_\sigma \sigma(2) \prec_\sigma \dots \prec_\sigma \sigma(N).\]
Let $\Partial_{[1,N]}$ denote the set of all \dfn{partial orders} $\prec$ on the set $[1,N]=\{1,\dots,N\}$. We make $\Partial_{[1,N]}$ itself into a partially ordered set (or, equivalently, a skeletal preorder) in which there is an arrow $\prec \to \prec'$ if and only if $\prec'$ refines $\prec$ in the obvious sense.
Given a partial order \(\prec\in \Partial_{[1,N]}\), let \(\mc O(U_\prec)\) denote the commutative algebra
\[\mc O(U_\prec) \coloneq \CC[z_1,\dots,z_N]\left[\prod_{\substack{i,j\in [1,N]\\ i\neq j, i\not\prec j, j\not\prec i}} 
\frac{1}{z_i-z_j}\right]\in \CAlg(\Vect_\CC).\]
That is, $\O(U_\prec)$ is the localization of the polynomial algebra $\CC[z_1,\dots,z_N]$ in which $z_i-z_j$ is invertible precisely for those distinct $i$ and $j$ that are $\prec$-incomparable. 
If $\prec'$ refines $\prec$ then there is a canonical inclusion of $\CC$-algebras
$\mc O(U_{\prec'}) \to \mc O(U_\prec)$, and hence a map of affine $\CC$-schemes $U_{\prec}\to U_{\prec'}$. This defines a functor\footnote{and hence a diagram in $\CC$-schemes. In fact $\RavConf_N$ as we are about to define it is the colimit in $\CC$-schemes of this diagram, $\RavConf_N=\colim_{\prec\in \Partial_{[1,N]}} U_\prec$.}
\[ U:\Partial_{[1,N]} \to \CAlg(\Vect_\CC)^\op\equiv \mathbf{AffSch}_\CC \into \mathbf{Sch}_\CC\] 

\subsection{Definition of $\RavConf_N$ by gluing Čech data}\label{the-thom-sullivan-functor}
Recall that if we are \emph{given} a $\CC$-scheme $X$, we can by definition always cover it with a collection $\mathscr U = \{U_i\}_{i\in I}$ of affine patches $U_i\in \catname{AffSch}_\CC$ indexed by some totally ordered index set $(I,<)$. The Čech nerve of this cover is the semisimplicial object in $\CC$-schemes given by the diagram
\[ \Cech(\mathscr U) = \left( 
\begin{tikzcd} 
\dotsb \quad 
\rar[->,shift left=4pt]\rar[->,shift left=0pt]\rar[->,shift right = 4pt]& 
\bigsqcup_{\substack{i,j\in I\\ i<j}} U_{i,j}  
\rar[->,shift left=2pt]\rar[->,shift right=2pt]& 
\bigsqcup_{i\in I} U_i
\end{tikzcd}\right)\] 
where \[U_{i,j}\coloneq U_i\cap_X U_j\] is the intersection in $X$ of the affine patches $U_i$ and $U_j$. 
(We recall the meaning of semisimplicial objects in a category in \cref{sec: review of thom-sullivan}.)
This is a diagram in $\CC$-schemes whose colimit is the original scheme $X$:
\[ X = \colim \Cech(\mathscr U) .\] 
Moreover we may always find a Leray cover, and the Čech cohomology of $\O$ with respect to such a cover computes the sheaf cohomology of $\O$. This is the case in particular if all the intersections $U_{i,j,\dots,k}$ are themselves affine.

With this in mind, let us define the \dfn{ravioli configuration space $\RavConf_N$} as follows. It is covered by the collection of $N!$ affine schemes
\[ \mathscr U \coloneq \left\{ U_{\prec_\sigma} \cong \AA_\CC^N: \sigma \in S_N \right\}.\]
We glue these affine patches together as follows. 
Given a collection of partial orders $\prec_1,\dots,\prec_k \in \Partial_{[1,N]}$,
let \[\prec_1\wedge\dots\wedge \prec_k\] denote their finest common coarsening, or in other words their meet, or categorical product, in \(\Partial_{[1,N]}\). 
All meets exist in \(\Partial_{[1,N]}\); at worst the meet may be the initial object, namely the partial order in which no two elements are comparable. 
Given any distinct $\sigma_1,\sigma_2\in S_N$ we \emph{define} the intersection in $\RavConf_N$ of the affine patches $U_{\prec_{\sigma_1}}$ and $U_{\prec_{\sigma_2}}$ to be $U_{\prec_{\sigma_1}\wedge\prec_{\sigma_2}}$:
\begin{equation} U_{\prec_{\sigma_1}}\cap_{\RavConf_N} U_{\prec_{\sigma_2}} \coloneq U_{\prec_{\sigma_1}\wedge\prec_{\sigma_2}},\nn\end{equation}
with the inclusions $U_{\prec_{\sigma_1}\wedge\prec_{\sigma_2}} \into U_{\prec_{\sigma_1}}$ and $U_{\prec_{\sigma_1}\wedge\prec_{\sigma_2}} \into U_{\prec_{\sigma_2}}$ being the canonical inclusions we noted in the definition of the functor $U$ above. This gluing data satisfies the triple overlap condition, and we define $\RavConf_N$ to be the resulting $\CC$-scheme. (See e.g. \cite[\S I.2.4 and Corollary I-14]{EisenbudHarris} for a discussion of the gluing construction.) Indeed, given any $\sigma_1,\dots,\sigma_k \in S_N$, the intersection of $U_{\prec_{\sigma_1}},\dots,U_{\prec_{\sigma_k}}$ in $\RavConf_N$ is then a copy of the affine scheme $U_{\prec_{\sigma_1}\wedge\dots\wedge \prec_{\sigma_k}}$:
\[ U_{\prec_{\sigma_1}}\cap_{\RavConf_N} \dots \cap_{\RavConf_N} U_{\prec_{\sigma_k}} \coloneq U_{\prec_{\sigma_1}\wedge\dots\wedge \prec_{\sigma_k}}.\]
Thus, we pick and fix arbitrarily any total order $<$ on $S_N$, and define the semisimplicial object in affine $\CC$-schemes given by the diagram
\[ \Cech(\mathscr U) = \left( 
\begin{tikzcd}
\dots \rar[->,shift left=4pt]\rar[->,shift left=0pt]\rar[->,shift right = 4pt]&
\bigsqcup_{\substack{\sigma_1,\sigma_2 \in S_N\\ \sigma_1<\sigma_2}} U_{\prec_{\sigma_1}\wedge \prec_{\sigma_2}} \rar[->,shift left=2pt]\rar[->,shift right=2pt]&
\bigsqcup_{\sigma\in S_N } U_{\prec_\sigma}
\end{tikzcd}\right);\]
this diagram has a colimit in the category of $\CC$-schemes, and we define $\RavConf_N$ to be that colimit:
\begin{equation} \RavConf_N \coloneq \colim_{\Sch_\CC} \Cech(\mathscr U) .\nn
\end{equation}

Informally then, $\RavConf_2$ is a copy of $\AA_\CC^2$ but with a doubled diagonal; $\RavConf_3$ is a copy of $\AA_\CC^3$ but with all pairwise diagonals doubled and the main diagonal $z_1=z_2=z_3$ having multiplicity $3!=6$; and so on.\footnote{We emphasize that, like the formal raviolo $\Rav$ itself, this configuration space $\RavConf_N$ is not separated as a scheme for any $N\geq 2$; its underlying topological space is not Hausdorff.}

\subsection{Derived sections and the Thom-Sullivan functor}\label{sec: derived sections and the Thom-Sullivan functor}
Our goal is now to give a model in dg commutative algebras of the derived global sections $R\Gamma(\O,\RavConf_N)$ of the structure sheaf on $\RavConf_N$.

Given any finite Leray cover $\mathscr U = \{U_i\}_{i\in I}$ of a $\CC$-scheme $X$, the derived global sections $R\Gamma(X,\O)$ of its structure sheaf $\O$ is the dg commutative algebra defined, up to zig-zags of quasi-isomorphisms, as the homotopy limit in dg commutative algebras
\[ R\Gamma(X,\O) = \holim\Gamma(\Cech(\mathscr U),\O) .\] 
The \dfn{Thom-Sullivan functor} $\Th^\bul$ provides one way of computing any such homotopy limit, i.e. the homotopy limit of any diagram given by a semicosimplicial object in dg commutative algebras. We recall the definition of this functor in \cref{sec: review of thom-sullivan} and refer the reader to \cite{HinicSchecDeformationTheoryLie1994}, \cite[Appendix A]{KapraInfinitedimensionalDgLie2021} or e.g. \cite{AlfonYoungHigherCurrentAlgebras2023a} for further discussion.

The model of the homotopy limit which the Thom-Sullivan functor produces can be understood as consisting of polynomial differential forms on a single $(|I|-1)$-simplex, valued in $\O(\bigcap_{i\in I} U_i)$, together with polynomial differential forms on every face of that simplex. Each face is labelled by some subset $S\subset I$, and the form on that face is valued in $\O(\bigcap_{i\in S} U_i)$. (These intersections are taken in $X$.) These forms are required to satisfy the natural compatibility conditions under pullbacks. 

Whenever, as is true in our case, the maps $\O(\bigcap_{i\in S} U_i)\to \O(\bigcap_{i\in I} U_i)$, $S \subset I$, are all embeddings of commutative algebras, then these compatibility conditions mean that the forms on the faces of the $(|I|-1)$-simplex are actually determined by the form in the bulk of the simplex itself. The model of the homotopy limit is then a dg commutative algebra of polynomial differential forms on the $(|I|-1)$-simplex, valued in $\O(\bigcap_{i\in I} U_i)$, subject to boundary conditions:
\begin{align} \Th^\bul( \Gamma(\Cech(\mathscr U),\O) ) &= \bigl\{ \omega \in \O(\bigcap_{i\in I} U_i) \ox \CC[u_i,\dd u_i]_{i\in I}\big/ \langle \sum_{i\in I} u_i - 1, \sum_{i\in I} \dd u_i \rangle \nn\\
  &\qquad\hskip-2mm: \omega|_{\{u_i=0 \forall i\in I \setminus S\}} \in \O(\bigcap_{i\in S} U_i) \ox \CC[u_i,\dd u_i]_{i\in S}\big/ \langle \sum_{i\in S} u_i - 1, \sum_{i\in S} \dd u_i \rangle\nn\\&\qquad\qquad\qquad\text{ for all nonempty subsets } S\subset I \bigr\}.\nn\end{align} 
(Here $\omega|_{\{u_i=0 \forall i\in I \setminus S\}}$ denotes the pullback.)

Thus, in our case, we arrive at the following. Define the dg commutative algebra $\A_N$ by
\begin{align} \A_N &\coloneq \bigl\{ \omega \in \B_N \ox \CC[u_\sigma, \dd u_\sigma]_{\sigma \in S_N}\big/ \langle \sum_{\sigma\in S_N} u_\sigma - 1, \sum_{\sigma\in S_N} \dd u_\sigma \rangle \nn\\
    &\qquad: \omega|_{\{u_\sigma=0 \forall \sigma\in S_N \setminus S\}}\in  
    \O(U_{\bigwedge_{\sigma\in S}\prec_{\sigma} }) \ox \CC[u_i,\dd u_i]_{i\in S}\big/ \langle \sum_{i\in S} u_i - 1, \sum_{i\in S} \dd u_i \rangle \nn\\&\qquad\text{ for all nonempty subsets } S\subset S_N  \bigr\}.\nn\end{align}
\begin{thm}
  This $\A_N$ is a model, in dg commutative algebras, of the derived global sections of the structure sheaf on the ravioli configuration space $\RavConf_N$:
  \[ \A_N \simeq R\Gamma(\RavConf_N,\O) .\]\qed
\end{thm}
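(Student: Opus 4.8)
The plan is to show that the cover $\mathscr U=\{U_{\prec_\sigma}\}_{\sigma\in S_N}$ is a Leray cover, and then to read off $\A_N$ as the Thom-Sullivan model of the resulting Čech diagram, using the explicit description of $\Th^\bul$ recalled in \cref{sec: derived sections and the Thom-Sullivan functor}.

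First I would verify the Leray condition. By the gluing construction of $\RavConf_N$ in \cref{the-thom-sullivan-functor}, every finite overlap $U_{\prec_{\sigma_1}}\cap_{\RavConf_N}\dots\cap_{\RavConf_N}U_{\prec_{\sigma_k}}$ is \emph{by definition} the affine scheme $U_{\prec_{\sigma_1}\wedge\dots\wedge\prec_{\sigma_k}}$. Since the structure sheaf $\O$ has vanishing higher cohomology on any affine scheme, the cover $\mathscr U$ is Leray, and hence the homotopy limit of the Čech diagram computes the derived global sections,
\[ R\Gamma(\RavConf_N,\O)\simeq\holim_{\mathbf{dgCAlg}_\CC}\Gamma(\Cech(\mathscr U),\O). \]
This is the one step where the non-separatedness of $\RavConf_N$ could in principle be an obstacle, since overlaps of affine opens in a non-separated scheme need not be affine; here they are affine by fiat, so the criterion still applies.

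Next I would evaluate this homotopy limit through the Thom-Sullivan functor. To descend from the general Thom-Sullivan model to the simplified ``boundary-condition'' form, one needs every structure map $\O(\bigcap_{\sigma\in S}U_{\prec_\sigma})\to\O(\bigcap_{\sigma\in S_N}U_{\prec_\sigma})$, for $\emptyset\neq S\subset S_N$, to be an embedding of commutative algebras. Each is an inclusion of one localization of the integral domain $\CC[z_1,\dots,z_N]$ into a finer localization, hence injective, so the hypothesis holds and the simplified formula applies with index set $I=S_N$.

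Finally I would identify the algebra on the full intersection. The meet $\bigwedge_{\sigma\in S_N}\prec_\sigma$ of all $N!$ total orders is the trivial partial order, in which no two distinct elements are comparable: for any $i\neq j$ there is a $\sigma$ with $i\prec_\sigma j$ and a $\sigma'$ with $j\prec_{\sigma'}i$, so no comparable pair survives. Hence $\O(U_{\bigwedge_{\sigma\in S_N}\prec_\sigma})=\CC[z_1,\dots,z_N][(z_i-z_j)^{-1}:i\neq j]=\B_N$, exactly the coefficient algebra appearing in $\A_N$. Substituting $I=S_N$, the full-intersection algebra $\B_N$, and the face-intersection algebras $\O(\bigcap_{\sigma\in S}U_{\prec_\sigma})=\O(U_{\bigwedge_{\sigma\in S}\prec_\sigma})$ into the general Thom-Sullivan expression reproduces the definition of $\A_N$ verbatim, giving $\A_N=\Th^\bul(\Gamma(\Cech(\mathscr U),\O))\simeq R\Gamma(\RavConf_N,\O)$. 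I expect the only genuinely substantive points to be the Leray verification in the non-separated setting and the bookkeeping of these identifications; everything else is a direct transcription of the formula already in hand.
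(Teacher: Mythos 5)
Your proposal is correct and takes essentially the same approach as the paper: the theorem is stated there without a separate proof precisely because the preceding discussion — the cover of $\RavConf_N$ by the affine patches $U_{\prec_\sigma}$ whose overlaps are affine by construction, the Čech nerve, the Thom-Sullivan computation of the homotopy limit, and the simplified boundary-condition formula valid when all restriction maps are embeddings — constitutes the proof. Your three steps (Leray verification in the non-separated setting, injectivity of the localization maps of $\CC[z_1,\dots,z_N]$, and identification of the full-intersection algebra with $\B_N$ via the trivial meet of all $N!$ total orders) simply make explicit the verifications the paper leaves tacit.
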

Because this algebra $\A_N$ will play a central role for us, it is worth noting the following equivalent description.
For any distinct $i,j\in [1,N]$, let $S_N^{ij}\subset S_N$ denote the set of total orders on $[1,N]$ in which $i$ precedes $j$:
 \begin{equation} S_N^{ij} \coloneq \{ \sigma \in S_N: i\prec_\sigma j \}.
  \label{eq: S_N^{ij}}\end{equation}
 \begin{lem}\label{lem: AN def}
  The definition of $\A_N$ above is equivalent to 
\begin{align} 
  \A_N &= \bigl\{ \omega \in \B_N \ox \CC[u_\sigma, \dd u_\sigma]_{\sigma \in S_N}\big/ \langle \sum_{\sigma\in S_N} u_\sigma - 1, \sum_{\sigma\in S_N} \dd u_\sigma \rangle \nn\\
  &\qquad:\text{for all distinct $i,j\in [1,N]$, the pullback $\omega|_{\{u_\sigma=0 \forall \sigma\in S_N^{ij}\}}$}
  \nn\\&\qquad\qquad\text{is regular in $z_i-z_j$ } \bigr\}.\nn
\end{align}
\end{lem}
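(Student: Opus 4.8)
The plan is to show the two displayed sets coincide by proving the two inclusions separately; the forward inclusion is essentially tautological, while the reverse inclusion carries the content. First I would set up the dictionary between the two pictures. The face appearing in the second description, cut out by setting $u_\sigma=0$ for all $\sigma\in S_N^{ij}$, has surviving coordinates indexed by $S_N\setminus S_N^{ij}=\{\sigma: j\prec_\sigma i\}$; that is, it is exactly the face $S=\{\sigma: j\prec_\sigma i\}$ of the first description. Since every $\sigma$ in this $S$ satisfies $j\prec_\sigma i$, the pair $\{i,j\}$ is comparable in the meet $\bigwedge_{\sigma\in S}\prec_\sigma$, so $z_i-z_j$ is \emph{not} inverted in $\O(U_{\bigwedge_{\sigma\in S}\prec_\sigma})$. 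Consequently the first-description membership condition on this face entails, in particular, regularity in $z_i-z_j$. As the second description imposes precisely these regularity conditions (one per ordered pair, each at the corresponding face), its conditions form a sub-collection of those of the first description, which gives the inclusion of the first set into the second at once.

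For the reverse inclusion I would argue locally in the face poset. Fix a nonempty $S\subseteq S_N$ and distinct indices $i,j$ that are unanimous in $S$, say $j\prec_\sigma i$ for every $\sigma\in S$; then $S\subseteq\{\sigma: j\prec_\sigma i\}=S_N\setminus S_N^{ij}$. The pullback to the face $S$ factors through the pullback to the larger face $S_N\setminus S_N^{ij}$ by further setting the remaining $u_\sigma$ to zero, and this further pullback is a homomorphism of $\B_N$-algebras: it acts only on the $\CC[u_\sigma,\dd u_\sigma]$ tensor factor and leaves the $z$-variables untouched. Regularity in $z_i-z_j$ is the condition of lying in the $\B_N$-subalgebra in which every difference except $z_i-z_j$ is inverted, tensored with the polynomial de Rham factor, and such a subalgebra is manifestly preserved by any map acting only on the $u$-factor. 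Hence, starting from the hypothesis (the second description) that the pullback to $S_N\setminus S_N^{ij}$ is regular in $z_i-z_j$, I conclude that the pullback to $S$ is regular in $z_i-z_j$ as well.

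It remains to assemble these one-difference-at-a-time statements into the single membership condition of the first description. The pullback to $S$ is regular in $z_i-z_j$ simultaneously for \emph{every} pair $\{i,j\}$ unanimous in $S$. Working coefficientwise over a basis of the flat factor $\CC[u_\sigma,\dd u_\sigma]_{\sigma\in S}/\langle\sum_{\sigma\in S}u_\sigma-1,\sum_{\sigma\in S}\dd u_\sigma\rangle$, each coefficient lies in $\B_N$ and, for each unanimous pair, in the subalgebra not inverting $z_i-z_j$. Because $\CC[z_1,\dots,z_N]$ is a unique factorization domain in which the linear forms $z_k-z_l$ are pairwise non-associate irreducibles for distinct pairs $\{k,l\}$, writing each coefficient as a reduced fraction shows that its denominator can involve only those $z_k-z_l$ that are inverted in every one of these subalgebras — that is, only the differences for pairs that are \emph{not} unanimous in $S$, equivalently incomparable in $\bigwedge_{\sigma\in S}\prec_\sigma$. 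This is exactly the assertion that the pullback lies in $\O(U_{\bigwedge_{\sigma\in S}\prec_\sigma})\ox\CC[u_i,\dd u_i]_{i\in S}/\langle\ldots\rangle$, i.e. the first-description condition on the face $S$. Since $S$ was arbitrary, the second set is contained in the first, completing the argument.

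I expect the only genuinely non-formal step to be this last one: the passage from a family of single-pole regularity conditions to membership in the single localization $\O(U_{\bigwedge_{\sigma\in S}\prec_\sigma})$, which rests on the UFD/coprimality property of $\CC[z_1,\dots,z_N]$ together with the pairwise non-proportionality of the linear forms $z_k-z_l$. Everything else is bookkeeping about which simplicial faces correspond to which meets of total orders, plus the observation that restriction in the simplicial coordinates preserves regularity in the $z$-variables.
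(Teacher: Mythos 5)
Your proof is correct, and its skeleton --- identify the lemma's faces with the faces $S_N\setminus S_N^{ij}=S_N^{ji}$, dispatch the easy inclusion, then propagate the pairwise regularity conditions to an arbitrary face $S$ --- is the same as the paper's. The difference lies in how the key step is justified, and it is a difference that matters. The paper asserts the identity $S=\bigcap_{i\prec_S j}S_N^{ij}$, equivalently that the face attached to $S$ equals the intersection of the faces attached to the $\prec_S$-comparable pairs; this is false in general: with $N=3$ and $S=\{(123),(231)\}$ the meet $\prec_S$ records only $2\prec_S 3$, so $\bigcap_{i\prec_S j}S_3^{ij}=S_3^{23}=\{(123),(213),(231)\}\supsetneq S$. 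Only the containment $S\subseteq\bigcap_{i\prec_S j}S_N^{ij}$ holds, and that containment is exactly what your argument uses: the pullback to the face of $S$ factors through the pullback to each larger face $\{u_\sigma=0\ \forall\sigma\in S_N^{ij}\}$, and the further pullback touches only the $u$-variables, hence preserves regularity in $z_i-z_j$. You also make explicit the final assembly step --- that simultaneous regularity in $z_i-z_j$ for all $\prec_S$-comparable pairs is equivalent to membership in $\O(U_{\prec_S})$, via the UFD property of $\CC[z_1,\dots,z_N]$ and the pairwise non-associate irreducibility of the linear forms $z_k-z_l$ --- whereas the paper treats this identification as immediate (and in fact even the paper's route would need your restriction-preserves-regularity observation to pass from the separate faces to their intersection). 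So your proposal follows the paper's overall route, but the two observations you supply are precisely what is needed to make the argument airtight, and in supplying them you quietly repair the one inaccurate claim in the paper's own proof.
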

\begin{proof} 
The idea is that by imposing the boundary conditions at these particular faces of the simplex, we are in fact imposing all the boundary conditions in the definition of $\A_N$, because all other faces on which the boundary conditions are non-empty arise as intersections of these faces. 

To see this in detail, it is enough to check that, for every nonempty subset $S\subset S_N$ of the set of total orders on $[1,N]$, if $\omega$ obeys the boundary conditions given in the lemma, then it obeys the boundary condition given in the definition of $\A_N$ at the face $\{u_\sigma=0 \forall \sigma\in S_N \setminus S\}$ corresponding to $S$.
To that end, pick any such $S$ and let $\prec_S$ denote the finest common coarsening,
\[\prec_S\,\, \coloneq \bigwedge_{\sigma\in S} \prec_\sigma,\]
or in other words, the partial order in which $i\prec_S j$ if and only if $i\prec_\sigma j$ for all $\sigma\in S$. The set $S$ is then the intersection (in $S_N$) of the subsets $S_N^{ij}$ as $i,j$ range over all pairs of distinct elements of $[1,N]$ such that $i\prec_S j$: 
\[ S= \bigcap_{i\prec_S j} S_N^{ij}.\]
It follows that our hyperplane $\{ u_\sigma = 0 \forall \sigma\in S_N \setminus S\}$ is the intersection (in $\AA_\CC^{N!}$) of the hyperplanes $\{ u_\sigma = 0 \forall \sigma\in S_\N \setminus S_N^{ij}\}$ as $i,j$ range over all pairs of distinct elements of $[1,N]$ that are comparable with respect to $\prec_S$:
\begin{equation} \{ u_\sigma = 0 \forall \sigma\in S_N \setminus S\} = \bigcap_{i\prec_S j} \{ u_\sigma = 0 \forall \sigma\in S_N\setminus S_N^{ij}\} .\nn\end{equation}
Now, of course, $S_N\setminus S_N^{ij} = S_N^{ji}$, and $z_i-z_j = -(z_j-z_i)$. So we see that by imposing the boundary conditions in the statement of the lemma, we are thereby imposing the condition that the pullback of $\omega$ to the face $\{ u_\sigma = 0 \forall \sigma\in S_N \setminus S\}$ is regular in $z_i-z_j$ for all $i\prec_S j$. This is the boundary condition on that face in the definition of $\A_N$, as required.
\end{proof}


\section{Rational ravioli coinvariants}\label{sec: rational ravioli coinvariants}
Having defined the ravioli configuration space $\RavConf_N$ and a model $\A_N$ of the derived global sections of its structure sheaf, we now define the ravioli analogues of rational coinvariants/conformal blocks from \cref{sec: usual rational conformal blocks}. As far as possible, we shall follow the same construction of spaces of coinvariants with movable marked points we reviewed starting in \cref{sec: movable punctures in one complex dimension}, with $\RavConf_N$ playing the role of $\Conf_N$ and $\A_N$ playing the role of $\B_N$. 


In what follows, the dg commutative algebra \[\A_N\simeq R\Gamma(\RavConf_N,\O)\] will play the role of the base ring (i.e. we shall work in dg $\A_N$-modules) in the same way that the commutative algebra \[\B_N = \Gamma(\Conf_N,\O)\] was the base ring (i.e. we worked in $\B_N$-modules) in the usual setting in \cref{sec: movable punctures in one complex dimension} of rational coinvariants with $N$ movable marked points.


Now, the commutative algebra $\B_{N+1} \cong \B_{N}[w,(w-z_i)^{-1}]_{i=1}^N$ was naturally a module over $\B_N$. 
We want something similar in the ravioli setting. Namely we would like to show that $\A_{N+1}$ is a commutative algebra in dg $\A_N$-modules in some natural way.

To that end, let 
\begin{equation}\pi_N:S_{N+1} \to S_N;\quad \sigma \mapsto  (\sigma(1),\sigma(2),\dots,\widehat{N+1},\dots,\sigma(N+1))\label{def: piN}\end{equation} 
denote the surjective map of sets which sends total orders on $[1,N+1]$ to total orders on $[1,N]$ by forgetting about the position of $N+1$. We define a map of dg commutative algebras
\[ \iota_N: \CC[u_\sigma,\dd u_\sigma]_{\sigma\in S_{N}} \to \CC[u_\sigma,\dd u_\sigma]_{\sigma\in S_{N+1}}\]
by setting 
\begin{equation} \iota_N(u_\sigma) \coloneq  \sum_{\tau\in \pi_N^{-1}(\sigma)} u_\tau . \label{eq: iota_N}\end{equation}
For example, when $N=2$, we have
\[ \iota_2(u_{(12)}) = u_{(123)} + u_{(132)} + u_{(312)}, \qquad 
\iota_2(u_{(21)}) = u_{(213)} + u_{(231)} + u_{(321)}.\]

\begin{lem}\label{lem: AN action on AN+1} There is an injective map of dg commutative algebras 
\[\iota_N: \A_N \into \A_{N+1},\] 
(overloading notation somewhat) given by the tensor product of $\iota_N$ above with the obvious embedding $\B_N \into \B_{N+1}$.
\end{lem}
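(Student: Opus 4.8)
The plan is to verify three things about $\iota_N$: that it descends to the quotient algebras, that its image satisfies the boundary conditions defining $\A_{N+1}$ (so that it genuinely lands in $\A_{N+1}$), and that it is injective. That $\iota_N$ respects the dg commutative algebra structure will be immediate, since by construction it is the tensor product of the algebra embedding $\B_N\into\B_{N+1}$ (a degree-zero algebra map, not meeting the $u$-variables on which the differential acts) with the map on polynomial de Rham forms sending $u_\sigma\mapsto\sum_{\tau\in\pi_N^{-1}(\sigma)}u_\tau$ extended as a dg algebra map; hence it is multiplicative and commutes with $\dd$.

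For well-definedness I would first note that the fibres $\pi_N^{-1}(\sigma)$, $\sigma\in S_N$, partition $S_{N+1}$, since $\pi_N$ of \cref{def: piN} is surjective. Consequently $\iota_N\bigl(\sum_{\sigma\in S_N}u_\sigma\bigr)=\sum_{\tau\in S_{N+1}}u_\tau$ and likewise $\iota_N\bigl(\sum_{\sigma\in S_N}\dd u_\sigma\bigr)=\sum_{\tau\in S_{N+1}}\dd u_\tau$, so the ideal $\langle\sum_\sigma u_\sigma-1,\sum_\sigma\dd u_\sigma\rangle$ of the source is carried into the corresponding ideal of the target; being an algebra map, $\iota_N$ therefore descends to the quotient.

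The heart of the argument, and the step I expect to require the most care, is preservation of the boundary conditions, for which I would use the economical description of $\A_{N+1}$ from \cref{lem: AN def}: it suffices to check, for every pair of distinct $i,j\in[1,N+1]$, that $\iota_N(\omega)$ pulled back to the face $\{u_\tau=0\ \forall\tau\in S_{N+1}^{ij}\}$ is regular in $z_i-z_j$. The two cases differ sharply. When both $i,j\le N$, forgetting $N+1$ does not alter the relative order of $i$ and $j$, so $S_{N+1}^{ij}=\pi_N^{-1}(S_N^{ij})$; a short computation on the generators $u_\sigma$ then shows that restricting $\iota_N(\omega)$ to this face equals applying to $\omega|_{\{u_\sigma=0\,\forall\sigma\in S_N^{ij}\}}$ the map on the surviving variables induced by $\pi_N\colon S_{N+1}^{ji}\to S_N^{ji}$ together with $\B_N\into\B_{N+1}$. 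Since $\omega\in\A_N$ makes the inner pullback regular in $z_i-z_j$, and since neither the $u$-variable substitution nor the embedding $\B_N\into\B_{N+1}$ (which only adjoins $w$ and the inverses $(w-z_k)^{-1}$) introduces a pole along $z_i-z_j$, regularity is preserved. When instead one index is $N+1$, regularity in $z_i-w$ is automatic: the image of $\iota_N$ has $\B$-coefficients lying in $\B_N\subset\B_{N+1}$, which do not involve $w$ at all, so no pole along $w-z_i$ can arise on any face.

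Finally, for injectivity I would identify the polynomial-forms factor of $\iota_N$ with the pullback $p^*$ along the affine surjection of simplices $p\colon\Delta^{\lvert S_{N+1}\rvert-1}\to\Delta^{\lvert S_N\rvert-1}$ induced by $\pi_N$, since indeed $p^*u_\sigma=\sum_{\tau\in\pi_N^{-1}(\sigma)}u_\tau=\iota_N(u_\sigma)$. Choosing any set-theoretic section of $\pi_N$ yields an affine section $s$ of $p$ with $p\circ s=\id$, whence $s^*\circ p^*=\id$ and $p^*$ is injective on polynomial de Rham forms. As $\B_N\into\B_{N+1}$ is also injective and we tensor over the field $\CC$, the map $\iota_N$ is injective on the ambient algebras, and \emph{a fortiori} on the subalgebra $\A_N$.
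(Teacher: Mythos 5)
Your proof is correct and follows essentially the same route as the paper's: first the ideal $\langle\sum_\sigma u_\sigma-1,\sum_\sigma\dd u_\sigma\rangle$ is carried into the corresponding ideal, and then the boundary conditions are preserved because pulling back to the face $\{u_\tau=0\ \forall\tau\in S_{N+1}^{ij}\}$ commutes with the substitution $u_\sigma\mapsto\sum_{\tau\in\pi_N^{-1}(\sigma)}u_\tau$ (the paper phrases this via $\omega=p\ox\lambda$ with the pullback of $\lambda$ vanishing wherever $p$ is singular, you phrase it via regularity as in \cref{lem: AN def} --- the same combinatorial fact $\pi_N^{-1}(S_N^{ij})=S_{N+1}^{ij}$ underlies both), while the conditions involving $w$ are vacuous since the coefficients lie in $\B_N\subset\B_{N+1}$. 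One small point in your favour: the paper's proof never explicitly addresses injectivity, whereas your section argument $s^*\circ p^*=\id$, coming from any set-theoretic section of $\pi_N$, supplies it cleanly.
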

\begin{proof}
First observe that $\iota_N$ maps the dg ideal $\langle \sum_{\sigma\in S_N} u_\sigma - 1, \sum_{\sigma\in S_N} \dd u_\sigma \rangle$ to the dg ideal $\langle \sum_{\tau\in S_{N+1}} u_\tau - 1, \sum_{\tau\in S_{N+1}} \dd u_\tau \rangle$, so it defines a map between the polynomial differential forms on the $(N!-1)$-simplex and those on the $((N+1)!-1)$-simplex. Now we need to check that this map respects the defining boundary conditions of $\A_{N+1}$. It is enough to consider elements $\omega\in \A_N$ of the form
\[ \omega = p \ox \lambda, \quad p\in \B_{N}, \quad \lambda \in \CC[u_\sigma,\dd u_\sigma]_{\sigma\in S_{N}}.\]

Obviously, $p$ is not singular in $z_i-z_{N+1}$ for any $i$. Pick any distinct $i,j\in [1,N]$ and suppose that $p$ is singular in $z_i-z_j$. We must check that $\iota_N(\lambda)$ vanishes on pullback to the face of the $(N+1)!-1$-simplex given by $\{u_\tau = 0 \forall \tau \in S_{N+1}^{ij}\}$. On that zero locus, we have that $\iota_N(u_\sigma) = 0$ for every $\sigma \in S_N^{ij}$. That is, the operation of pulling back to the zero locus of all the $u_\tau$ with $\tau \in S_{N+1}^{ij}$ factors through the operation of pulling back first to the zero locus of the images $\iota_N(u_\sigma)$ with $\sigma \in S_N^{ij}$. That latter pullback commutes with the dg algebra map $\iota_N$, i.e. $\iota_N(\lambda)|_{\{\iota_N(u_\sigma) = 0 \forall \sigma \in S_N^{ij}\}} = \iota_N(\lambda|_{\{u_\sigma = 0 \forall \sigma \in S_N^{ij}\}})$. And finally, the pullback $\lambda|_{\{u_\sigma = 0 \forall \sigma \in S_N^{ij}\}}$ vanishes, since by assumption $\omega$ obeys the defining boundary conditions of $\A_N$.
\end{proof}

For example, when $N=2$, we have the following well-defined element of $\A_2$:
\[ \frac{u_{(12)} u_{(21)}}{z_1-z_2} \in \A_2, \]
and its image is a well-defined element of $\A_3$:
\[ \frac{(u_{(123)} + u_{(132)} + u_{(312)})(u_{(213)} + u_{(231)} + u_{(321)})}{z_1-z_2} \in \A_3 .\]

In this way, $\A_{N+1}$ has the structure of a commutative algebra not just in dg vector spaces but in dg $\A_N$-modules. 

In passing, let us note that the map $\iota_N: \A_N \into \A_{N+1}$ of \cref{lem: AN action on AN+1} has a natural family of generalizations. 
Given any subset $J\subset [1,N]$ we have a corresponding surjection 
\begin{equation} \pi_{J\subset [1,N]}: S_{N} \onto S_{|J|} \nn\end{equation}
between the sets of total orders (given by forgetting about the positions of elements of $[1,N]\setminus J$) and hence a map of dg commutative algebras
\begin{equation} 
\iota_{J\subset [1,N]}:\CC[u_\sigma,\dd u_\sigma]_{\sigma\in S_{|J|}} \to \CC[u_\sigma,\dd u_\sigma]_{\sigma\in S_{N}}\label{def: iota j}
\end{equation}
On tensoring this with the obvious embedding of commutative algebras
\[ \B_{|J|} \xrightarrow\cong \CC[z_i,(z_i-z_j)^{-1}]_{i,j\in J, i\neq j} \into \B_N \]   
we obtain an embedding of dg commutative algebras
\begin{equation} \iota_{J\subset [1,N]}:\A_{|J|} \into \A_N. \end{equation}
For example  we have the three maps 
\[ \iota_{\{1,2\}\subset [1,3]}, \iota_{\{1,3\}\subset [1,3]}, \iota_{\{2,3\}\subset [1,3]}: \A_2 \into \A_3. \]

\begin{exmp}
Such maps are a rich source of interesting elements of $\A_N$. Let $i,j\in [1,N]$ be distinct. Then the elements
\[ \frac{\dd \iota_{\{i,j\}}(u_{(12)})}{z_i-z_j} \]
belong to $\A_N$. Compare \cref{example: OS-like elements} below.
\end{exmp}

\subsection{Expansion maps}\label{sec: expansion-maps}

Recall that we write  
\[\iota_{w\to z_s}: \B_{N+1} \to \B_N((w-z_s)) \] 
for the map of commutative algebras in $\B_N$-modules given by taking the Laurent expansion in small \(w-z_s= z_{N+1}-z_s\) with $z_1,\dots,z_N$ held fixed. (To keep track of the distinguished role of the last coordinate, we shall continue to write 
\(w \coloneq z_{N+1}\).)
One thinks of $\B_N((w-z_s))$ as a certain completion of the tensor product
\be \B_N \ox_\CC \CC((w-z_s)) \cong \Gamma(\Conf_N,\O)\ox_\CC \Gamma(D^\times,\O).\nn\ee
In a similar spirit, let us now introduce a commutative algebra in dg $\A_N$-modules $\A_N\sql w-z_s\sqr$ which we think of as a certain completion of the tensor product
\be \A_N \ox_\CC \CC\sql w-z_s\sqr \simeq R\Gamma(\RavConf_N,\O)\ox_\CC R\Gamma(\Rav,\O).\nn\ee
Namely, we define
\begin{align} \MoveEqLeft\A_N\sql w-z_s\sqr \coloneq \nn\\& \bigl\{ \omega \in \B_N((w-z_s)) \ox \CC[v,\dd v] \ox \CC[u_\sigma, \dd u_\sigma]_{\sigma \in S_N}\big/ \langle \sum_{\sigma\in S_N} u_\sigma - 1, \sum_{\sigma\in S_N} \dd u_\sigma \rangle \nn\\
  &\qquad:\text{for all distinct $i,j\in [1,N]$, the pullback $\omega|_{\{u_\sigma=0 \forall \sigma\in S_N^{ij}\}}$  }\nn\\&\qquad\qquad\text{is regular in $z_i-z_j$, } \nn\\&\qquad \qquad \text{ and both $\omega|_{v=0}$ and $\omega|_{v=1}$ are regular in $w-z_s$} \bigr\}.\nn\end{align}  

We think of $\A_N\sql w-z_s \sqr$ as the dg commutative algebra of functions on the formal raviolo near the $s$th of $N$ distinguishable movable marked points, in the same way that $\B_N((w-z_s))$ is the commutative algebra of functions on the punctured formal disc near the $s$th of $N$ distinguishable movable marked points, cf. \cite[\S13.2]{FrenkelBenZvi}.  

Now we shall define a map of commutative algebras in dg $\A_N$-modules
\[ \A_{N+1} \to \A_N\sql w-z_s\sqr\]
given by Laurent expanding in $w-z_s$ and simultaneously pulling back to a certain judiciously chosen curved copy of $\Delta_\CC^1\times \Delta_\CC^{N!-1}$ inside the algebro-geometric $((N+1)!-1)$-simplex
\[ \Delta_\CC^{(N+1)!-1} \coloneq \Spec\left( \CC[u_\sigma]_{\sigma\in S_{N+1}}\big/ \langle \sum_{\sigma\in S_{N+1}} u_\sigma - 1 \rangle\right)  \into \AA_\CC^{(N+1)!}.\]
Namely, we first define a map of affine schemes
\[ \AA_\CC^1 \times \AA_\CC^{N!} = \Spec\left(\CC[v] \ox \CC[u_\sigma]_{\sigma\in S_{N}}\right) \to \AA_\CC^{(N+1)!} = \Spec \CC[u_\sigma]_{\sigma\in S_{N+1}} \]
or equivalently a map of commutative algebras
\[ \CC[u_\sigma]_{\sigma\in S_{N+1}} \to \CC[v] \ox \CC[u_\sigma]_{\sigma\in S_{N}}\]
by sending
\begin{equation}
  u_\sigma \mapsto \begin{cases} 
    (1-v) u_{(\dots,s,\dots)} & \text{if } \sigma = (\dots ,N+1,s,\dots) \\
    v u_{(\dots,s,\dots)} & \text{if } \sigma = (\dots ,s,N+1,\dots) \\
    0 & \text{otherwise.} \end{cases} \label{eq: pullback to product of simplices} \end{equation}
Since 
\[ \sum_{\sigma\in S_{N+1}} u_\sigma \mapsto \sum_{\sigma\in S_{N}} \left( v u_\sigma + (1-v) u_\sigma  \right)  = \sum_{\sigma\in S_{N}} u_\sigma, \]
this map induces a map of affine schemes
\[ p_{N+1\to s}: \Delta_\CC^1 \times \Delta_\CC^{N!-1} \to \Delta_\CC^{(N+1)!-1}.\]
One should think of this as embedding a curved copy of $\Delta_\CC^1\times \Delta_\CC^{N!-1}$ into $\Delta_\CC^{(N+1)!-1}$. We get also the map $p_{N+1\to s}^*$ of dg commutative algebras from the polynomial differential forms on $\Delta_\CC^{(N+1)!-1}$ to the polynomial differential forms on $\Delta_\CC^1 \times \Delta_\CC^{N!-1}$,
\begin{align} p_{N+1\to s}^* &: \CC[u_\sigma,\dd u_\sigma]_{\sigma\in S_{N+1}}\big/ \langle \sum_{\sigma\in S_{N+1}} u_\sigma - 1, \sum_{\sigma\in S_{N+1}} \dd u_\sigma \rangle\nn\\&\qquad\qquad
\to \CC[v,\dd v] \ox \CC[u_\sigma,\dd u_\sigma]_{\sigma\in S_{N}}\big/\langle \sum_{\sigma\in S_{N}} u_\sigma - 1, \sum_{\sigma\in S_{N}} \dd u_\sigma \rangle, \nn\end{align}
given again by \cref{eq: pullback to product of simplices}.

We can now define an analogue of the Laurent-expansion map $\iota_{w\to z_s}$ for $\A_N$. (Here we shall overload the notation $\iota_{w\to z_s}$ somewhat.) 
\begin{defprop}\label{defprop: iota}
There is a map of commutative algebras in dg $\A_N$-modules  
\[ \iota_{w\to z_s} : \A_{N+1} \to \A_N\sql w-z_s\sqr\]
given by $\iota_{w\to z_s} \ox p_{N+1\to s}^*$. That is, we take the formal Laurent expansion in small \(w-z_s\) with $z_1,\dots,z_N$ held fixed, and we pull back to the copy of $\Delta_\CC^1 \times \Delta_\CC^{N!-1}$ in $\Delta_\CC^{(N+1)!-1}$ defined by the map $p_{N+1\to s}$ above. 
\end{defprop}
\begin{proof}
We certainly have a map of commutative algebras in dg vector spaces
\[ \A_{N+1} \to \B_N((w-z_s)) \ox \CC[v,\dd v] \ox \CC[u_\sigma, \dd u_\sigma]_{\sigma \in S_N}\big/ \langle \sum_{\sigma\in S_N} u_\sigma - 1, \sum_{\sigma\in S_N} \dd u_\sigma \rangle .\]
What has to be checked is first that this map respects the defining boundary conditions of $\A_{N}\sql w-z_s\sqr$ and second that it is a map of $\A_{N}$-modules. 

Let $\omega \in \A_{N+1}$ be any element. We must show that $\iota_{w\to z_s}(\omega)$ obeys the defining boundary conditions of $\A_{N}\sql w-z_s\sqr$. 

Consider first $(\iota_{w\to z_s}(\omega))|_{v=0}$. On the  preimage of the zero locus of $v$ under the map $p^*_{N+1\to s}$, we have that $u_\tau$ vanishes for all $\tau \in S_{N+1}$ except for those of the form $(\dots,N+1,s,\dots)$. Thus, in particular, on that preimage we have that $u_\tau = 0$ for all $\tau \in S_{N+1}^{s,N+1}$. Therefore the pullback of $\omega$ to that preimage is regular in $w-z_s$ by virtue of the defining boundary conditions of $\A_{N+1}$, and hence  $(\iota_{w\to z_s}(\omega))|_{v=0}$ is regular in $w-z_s$, as required.

The argument for $(\iota_{w\to z_s}(\omega))|_{v=1}$ is similar. 

Next consider $(\iota_{w\to z_s}(\omega))|_{u_{\sigma}=0 \forall \sigma\in S_{N}^{ij}}$ for distinct $i,j \in [1,N]\setminus\{s\}$. We must show that this is regular in $z_i-z_j$. When we set to zero $u_\sigma$ for all $\sigma \in S_N^{ij}$, we are thereby setting to zero the images $p_{N+1\to s}^*(u_\tau)$ of $u_\tau$ for all $\tau \in S_{N+1}^{ij}$. Therefore the pullback $\omega|_{(p_{N+1\to s}^*)^{-1}(\{u_\sigma=0\forall \sigma \in S_N^{ij}\})}$ of $\omega$ to this preimage is regular in $z_i-z_j$, by virtue of the defining boundary conditions of $\A_{N+1}$. Hence $(\iota_{w\to z_s}(\omega))|_{u_{\sigma}=0 \forall \sigma\in S_{N}^{ij}}$ is regular in $z_i-z_j$, again as required.

Finally we consider $(\iota_{w\to z_s}(\omega))|_{u_{\sigma}=0 \forall \sigma\in S_{N}^{is}}$ for $i \in [1,N]\setminus\{s\}$. When we set to zero $u_\sigma$ for all $\sigma \in S_N^{is}$, we are thereby setting to zero the images $p_{N+1\to s}^*(u_\tau)$ of $u_\tau$ \emph{both} for all $\tau \in S_{N+1}^{is}$ \emph{and} crucially also for all $\tau \in S_{N+1}^{i,N+1}$. Therefore the defining boundary conditions of $\A_{N+1}$ guarantee that the pullback $\omega|_{(p_{N+1\to s}^*)^{-1}(\{u_\sigma=0\forall \sigma \in S_N^{is}\})}$ of $\omega$ to this preimage is regular in both $z_i-z_s$ and crucially also in $z_i-w$. Hence $(\iota_{w\to z_s}(\omega))|_{u_{\sigma}=0 \forall \sigma\in S_{N}^{is}}$ is regular in $z_i-z_s$, again as required. (One should keep in mind that the process of taking Laurent expansions introduces additional singularities. For example
\[\iota_{w\to z_s} \frac{1}{w-z_i} = \sum_{k=0}^\8 (-1)^k\frac{1}{(z_s-z_i)^{k+1}} (w-z_s)^k .\]
These are dealt with by the ``and crucially'' part of the argument above.)

The argument for $(\iota_{w\to z_s}(\omega))|_{u_{\sigma}=0 \forall \sigma\in S_{N}^{si}}$ is similar. 


It remains to check that the map is a map of dg $\A_N$-modules, cf. \cref{lem: AN action on AN+1}. But this follows from the observation that, for every $\sigma \in S_N$, we have
\[ p_{N+1\to s}^* (\iota_N(u_\sigma)) = \sum_{\tau\in \pi_N^{-1}(\sigma)} p_{N+1\to s}^*(u_\tau) = v u_\sigma + (1-v) u_\sigma = u_\sigma .\qedhere\]
\end{proof}

\subsection{Cospan of dg Lie algebras}\label{sec: lie-algebra-splitting}
Let us define $\A_{N+1}'$ to be the nonunital dg subalgebra of $\A_{N+1}$ consisting of those elements vanishing as $w\to \8$.
At this stage we have maps of commutative algebras in dg $\A_N$-modules
\[ \A_{N+1}' \xrightarrow{} \bigoplus_{k=1}^N \A_N\sql w-z_k\sqr \xleftarrow{} \bigoplus_{k=1}^N \A_N\sql w-z_k\sqr_+ .\]
(Here $\A_N\sql w-z_k\sqr_+:= \A_N[[w-z_k]] \ox \CC[v,\dd v]$, as in \cref{sec: splitting}.)

Let $\g$ be a simple finite-dimensional Lie algebra over $\CC$, as earlier. 
On tensoring with $\g$ we get maps of Lie algebras in dg $\A_N$-modules. Namely, let us define
\[ \a \coloneq \g \ox \bigoplus_{k=1}^N \A_N\sql w-z_k\sqr, \qquad \a_+ \coloneq \g \ox \bigoplus_{k=1}^N \A_N\sql w-z_k\sqr_+, \qquad \a_- \coloneq \g \ox \A_{N+1}' .\]
Then we have the cospan of Lie algebras in dg $\A_N$-modules
\[ \a_- \xrightarrow{\iglob} \a \xleftarrow{\iloc} \a_+. \]

This is analogous to the cospan of Lie algebras in $\B_N$-modules we had in \cref{sec: usual rational conformal blocks} above. (In what follows we shall often omit the map $\iloc$ and simply identify elements of $\a_+$ with their embedded images in $\a$.) 

Moreover, we still have the following.
\begin{prop}\label{prop: a is a sum}
   As a dg $\A_N$-module, $\a$ is the sum (although not, as we shall see, the direct sum) of the images of $\a_+$ and $\a_-$:
\[ \a = \a_+ + \iglob(\a_-) .\]
\end{prop}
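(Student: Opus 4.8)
The plan is to treat this as a Mittag–Leffler / partial-fractions problem, now decorated with the simplicial data of $\RavConf_{N+1}$, exactly paralleling the classical decomposition \cref{movabledecomp} but retaining only surjectivity. Since $\g$ is finite-dimensional, $\g\ox(-)$ is exact and commutes with $\iglob$ and $\iloc$, so it suffices to show that the map of dg $\A_N$-modules
\[ \A_{N+1}'\oplus\bigoplus_{k=1}^N\A_N\sql w-z_k\sqr_+\longrightarrow\bigoplus_{k=1}^N\A_N\sql w-z_k\sqr,\qquad(\Phi,(\psi_k))\mapsto\bigl(\iota_{w\to z_k}(\Phi)+\psi_k\bigr)_k, \]
is surjective, with $\iota_{w\to z_k}$ the expansion map of \cref{defprop: iota}. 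First I would split a given $(\omega_k)_k$ as $\omega_k=\omega_k^-+\omega_k^+$ into its principal part $\omega_k^-=\sum_{m\ge1}c_{k,-m}(w-z_k)^{-m}$ and its part $\omega_k^+$ regular in $w-z_k$. Because the boundary conditions defining $\A_N\sql w-z_k\sqr$ are imposed coefficientwise in $w-z_k$, one has $\omega_k^+\in\A_N\sql w-z_k\sqr_+$ (which I absorb into $\psi_k$), while each $c_{k,-m}$ is regular in the $z_i-z_j$ on the appropriate $u$-faces and, crucially, $\omega_k^-|_{v=0}=\omega_k^-|_{v=1}=0$. Thus everything reduces to realising the tuple of principal parts $(\omega_k^-)_k$ by a single global $\Phi\in\A_{N+1}'$.

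I would build such a $\Phi=\sum_{k=1}^N\Phi_k$ by partial fractions. For fixed $k$, the rational factors $(w-z_k)^{-m}=(z_{N+1}-z_k)^{-m}$ lie in $\B_{N+1}'$ and are singular only along $w=z_k$. For the simplicial factors I would invert the pullback $p_{N+1\to k}^*$ of \cref{eq: pullback to product of simplices}, which is a surjection onto the forms on $\Delta_\CC^1\times\Delta_\CC^{N!-1}$: indeed $v$ lifts to $\sum_\sigma u_{\tau_\sigma^+}$ and $u_\sigma$ to $u_{\tau_\sigma^+}+u_{\tau_\sigma^-}$, where $\tau_\sigma^\pm\in S_{N+1}$ are the two orders refining $\sigma$ that place $N+1$ immediately after, respectively before, $k$. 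Fixing the canonical linear section $L_k$ supported on the adjacency orders (so $v\,u_\sigma\mapsto u_{\tau_\sigma^+}$ and $(1-v)u_\sigma\mapsto u_{\tau_\sigma^-}$) and setting $\Phi_k\coloneq\sum_{m\ge1}L_k(c_{k,-m})(w-z_k)^{-m}$, the identity $p_{N+1\to k}^*\circ L_k=\id$ gives $\iota_{w\to z_k}(\Phi_k)=\omega_k^-$ on the nose; and since $\Phi_k$ has no pole along $w=z_j$ for $j\ne k$, the difference $\iota_{w\to z_k}(\Phi)-\omega_k^-$ is regular in $w-z_k$. Hence $\Phi$ realises all the principal parts modulo $\bigoplus_k\A_N\sql w-z_k\sqr_+$, as required.

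The real content — and where I expect the difficulty — is verifying that each $\Phi_k$ actually lies in $\A_{N+1}'$, i.e. obeys the boundary conditions of $\RavConf_{N+1}$ at every face. For a pair $i,j\in[1,N]$ this is inherited from $\omega_k$, since $\tau_\sigma^\pm$ restricts to $\sigma$ on $[1,N]$ and hence $\tau_\sigma^\pm\in S_{N+1}^{ij}\iff\sigma\in S_N^{ij}$, so $L_k$ respects the relevant face restrictions (cf. \cref{lem: AN action on AN+1}). For a pair $\{i,N+1\}$ with $i\ne k$, regularity in $z_i-w$ is automatic because $\Phi_k$ has no pole along $w=z_i$. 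The delicate faces are the collision faces $\{u_\tau=0:\tau\in S_{N+1}^{k,N+1}\}$ and $\{u_\tau=0:\tau\in S_{N+1}^{N+1,k}\}$: on the first, all orders $\tau_\sigma^+$ are killed, the surviving adjacency data is carried by the $\tau_\sigma^-$, and tracing through $p_{N+1\to k}^*$ (whose preimage of this face is exactly $\{v=0\}$) identifies the relevant coefficient of $\Phi_k$ with $\omega_k^-|_{v=0}=0$, forcing the pole part to vanish and hence regularity in $z_k-w$; the second face is dual, using $\omega_k^-|_{v=1}=0$. This is precisely the mechanism by which the raviolo boundary conditions $\omega_k^-|_{v=0}=\omega_k^-|_{v=1}=0$ do the work that vanishing-at-$\infty$ did classically. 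The one genuinely fiddly bookkeeping point is making $L_k$ consistent with the simplicial relations $\sum_\tau u_\tau=1$, $\sum_\tau\dd u_\tau=0$; I would handle this by defining $L_k$ on reduced forms rather than as an algebra map. Finally I would stress that only the sum, not the directness, is being asserted, so no uniqueness statement should be attempted — indeed the decomposition is genuinely non-unique.
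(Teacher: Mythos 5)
Your overall strategy coincides with the paper's proof of \cref{prop: a is a sum}: split each local element $\omega_k\in\A_N\sql w-z_k\sqr$ coefficientwise into its polar and regular parts in $w-z_k$, note that the raviolo boundary conditions force $\omega_k^-|_{v=0}=\omega_k^-|_{v=1}=0$, and then realise the tuple of polar parts as the expansion of a single global element of $\g\ox\A_{N+1}'$ by means of a section of the simplicial pullback $p_{N+1\to k}^*$ of \cref{eq: pullback to product of simplices}. The reduction to surjectivity, the splitting, and the list of faces to be checked all match the paper's argument (where the global lift is the map $g_k$ of \cref{def: gk}).

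The genuine gap is exactly the point you set aside as ``fiddly bookkeeping'': your section $L_k$ is not defined, and everything that remains to be proved depends on it. The prescription $v\,u_\sigma\mapsto u_{\tau_\sigma^+}$, $(1-v)\,u_\sigma\mapsto u_{\tau_\sigma^-}$ determines $L_k$ only on the linear span of the elements $v\,u_\sigma$ and $(1-v)\,u_\sigma$, whereas the coefficients $c_{k,-m}$ are arbitrary polynomial forms on $\Delta_\CC^1\times\Delta_\CC^{N!-1}$ (arbitrary degree in $v$ and in the $u_\sigma$, with $\dd v$ and $\dd u_\sigma$ factors); declaring that you would ``define $L_k$ on reduced forms'' does not produce such an extension. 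More seriously, every one of your face verifications --- both for the $S_N^{ij}$ faces and for the two collision faces, where you need the restriction of $L_k(c)$ to $\{u_\tau=0\ \forall\tau\in S_{N+1}^{k,N+1}\}$ to be computed from $c|_{v=0}$ --- is a property of the (unspecified) extension, not of the bilinear prescription, so none of them can actually be checked as the proof stands. The paper resolves precisely this difficulty by making the section geometric: $p_{N+1\to k}$ admits a scheme-theoretic left inverse $q_k$ (\cref{qdef}), with $q_k^*=\iota_{\{k,N+1\}\subset[1,N+1]}\ox\iota_{[1,N]\subset[1,N+1]}$, explicitly $q_k^*(v)=\sum_{\tau\in S_{N+1}^{k,N+1}}u_\tau$ and $q_k^*(u_\sigma)=\sum_{\tau\in\pi_N^{-1}(\sigma)}u_\tau$. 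Being the pullback of a map of schemes, $q_k^*$ is an algebra map defined on all forms and automatically compatible with restriction to faces: for instance, on the face $\{u_\tau=0\ \forall\tau\in S_{N+1}^{k,N+1}\}$ one has $q_k^*(v)=0$, so $q_k^*(\omega)$ restricts there to the pullback of $\omega|_{v=0}=0$, which is the whole collision-face check. Note also that the paper's section is supported on \emph{all} orders refining $\sigma$, respectively all orders with $k$ preceding $N+1$, not just the adjacency ones --- an indication that your adjacency-supported ansatz is unlikely to admit the restriction-compatible extension your verifications presuppose. If you replace $L_k$ by $q_k^*$, your argument closes up and becomes the paper's proof.
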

\begin{proof}
We must show that every element of $\a$ can be written as a sum of an element of $\a_+$ and an element of $\iglob(\a_-)$. Let us define 
\[ \a^\pm := \g \ox \bigoplus_{i=1}^N \A_N\sql w-z_i\sqr_\pm \] 
where on the right $\pm$ means restricting to non-negative (respectively, strictly negative) powers of $w-z_i$. Thus, $\a^+ \equiv \a_+$, but of course $\a^- \neq \iglob(\a_-)$. At the level of dg $\A_N$-modules we evidently have
\( \a = \a^- \oplus \a^+. \) 
Given any element $X \in \a$, let $X = X^+ + X^-$ be its corresponding decomposition. It is enough to show that $X^-$ is in the image of $\iglob(\a_-)$ modulo terms in $\a^+$.
In other words, it is enough to construct a map of dg $\A_N$-modules 
\[ g: \a^- \to \a_-, \] 
the ``building global objects'' map, with the property that $(\iglob(g(X^-)))^-=X^-$ for all $X \in \a$. 
To that end, we first note that the map 
\[ p_{N+1\to s} : \Delta_\CC^1 \times \Delta_\CC^{N!-1} \to \Delta_\CC^{(N+1)!-1} \]
we defined above has a left inverse
\begin{equation} q_{s} : \Delta_\CC^{(N+1)!-1} \to \Delta_\CC^1 \times \Delta_\CC^{N!-1} \label{qdef}\end{equation}
given by (here, recall \cref{def: iota j})
\[ q_{s}^* := \iota_{\{s,N+1\}\subset [1,N+1]} \ox \iota_{[1,N]\subset [1,N+1]} .\]
That is, explicitly, we have
\[ q_{s}^*(v) = \sum_{\tau \in S_{N+1}^{s,N+1}} u_\tau ,\qquad q_{s}^*(u_\sigma) = \iota_N(u_\sigma),\,\,\sigma \in S_N, \]
with $\iota_N$ as in \cref{eq: iota_N} and $S_{N+1}^{s,N+1}$ as in \cref{eq: S_N^{ij}}.
To see that $q_{s}$ is left inverse to $p_{N+1\to s}$ we note first that $p_{N+1\to s}^* q_{s}^* (u_\sigma) = p_{N+1\to s}^* (\iota_N(u_\sigma)) = u_\sigma$ for every $\sigma \in S_N$, as we checked in the proof of \cref{defprop: iota} above; and second that 
\[ p_{N+1\to s}^* q_{s}^* (v) 
= p_{N+1\to s}^* \left( \sum_{\tau \in S_{N+1}^{s,N+1}} u_\tau \right) 
 = v\sum_{\sigma \in S_N} u_\sigma =v.\]
where in the last equality we have used the defining relation $\sum_{\sigma \in S_N} u_\sigma = 1$ of the algebro-geometric simplex $\Delta_\CC^{N!-1}$.

Now, suppose we are given an element $\omega \in \A_N\sql w-z_k\sqr_-$. Such an $\omega$ is in particular a polynomial differential form valued in $(w-z_k)^{-1} \B_N[(w-z_k)^{-1}]$. We apply to it the map of commutative algebras in $\B_N$-modules
\be (w-z_k)^{-1} \B_N[(w-z_k)^{-1}] \into \B_{N+1}'.\nn\ee 
The result is an element of the dg commutative algebra
\[ \B_{N+1}' \ox \CC[v,\dd v] \ox \CC[u_\sigma,\dd u_\sigma]_{\sigma\in S_N}\big/ \langle \sum_{\sigma\in S_N} u_\sigma - 1, \sum_{\sigma\in S_N} \dd u_\sigma \rangle \]
obeying certain boundary conditions. We may map it to the dg commutative algebra 
\[ \B_{N+1}' \ox \CC[u_\sigma,\dd u_\sigma]_{\sigma\in S_{N+1}}\big/ \langle \sum_{\sigma\in S_{N+1}} u_\sigma - 1, \sum_{\sigma\in S_{N+1}} \dd u_\sigma \rangle \] 
via the map $q_{s,N}^*$. Let us check that the resulting form, call it $\tilde\omega$, obeys the defining boundary conditions of $\A_{N+1}$. First, consider singularities in $z_i-z_j$ for any $i,j\in [1,N]$ with $i\neq j$. We must consider the pullback of $\tilde\omega$ to $\{u_\sigma=0 \forall \sigma \in S_{N+1}^{ij}\}$. On the latter zero locus, we have $q_{s,N}^*(u_\tau)=0$ for every $\tau \in S_N^{ij}$. Therefore this pullback of $\tilde\omega$ is regular in $z_i-z_j$, since $\omega$ obeyed the boundary conditions of $\A_N$. Next, consider singularities in $w-z_s$. We are to consider the pullback of $\tilde\omega$ to $\{u_\sigma=0 \forall \sigma \in S_{N+1}^{s,N+1}\}$ (or the same with $S_{N+1}^{N+1,s}$, for which the argument is similar). On that zero locus, $q_{s}^*(v)$ vanishes. Therefore $\tilde\omega$ is regular in $w-z_s$ there, since $\omega$ obeyed the boundary conditions of $\A_N\sql w-z_s\sqr$. Finally note that $\tilde\omega$ is obviously regular everywhere in $z_i-w$ for $i\neq s$. 

In this way we obtain a map of dg $\A_{N}$-modules
\begin{equation} g_k: \A_N\sql w-z_k\sqr_- \to \A_{N+1}' \label{def: gk}\end{equation}
for each $k$. For future use, let us remark that for each individual $k$, the map $g_k$ is even a map of commutative algebras in dg $\A_{N}$-modules. (It is the analogue in our raviolo context of the map $(w-z_k)^{-1}\B_N[(w-z_k)^{-1}] \to \B_{N+1}'$ above.)

Hence we obtain a map of dg $\A_N$-modules
\[ g: \g \ox \bigoplus_{k=1}^N \A_N\sql w-z_k\sqr_- \to \g \ox \A_{N+1}' .\]
By construction, we have that $(\iglob(g(X)))^-=X^-$ for all $X \in \a$. Thus, finally, 
\be X = \iglob(g(X)) + (X - \iglob(g(X))) \in \iglob(\a_-) + \a^+ \ee
for every $X\in \a$, as required. 
\end{proof}

The map $\iglob$ has a nontrivial kernel.\footnote{For example, consider the case $N=2$, $N+1=3$. The element 
  \[ \frac{u_{(123)} u_{(312)}}{w-z_2} \]
  belongs to $\A_3'$, and it is in the kernel of the expansion map $\A_3 \to \A_2\sql w-z_1\sqr\oplus \A_2\sql w-z_2\sqr$. Indeed, $p_{3\to 1}^*(u_{(123)}) = 0$ and $p_{3\to 2}^*(u_{(312)}) = 0$.
  
  (Observe, in passing, that for example neither $\frac{u_{(123)}}{w-z_2}$ nor $\frac{\dd u_{(123)}}{w-z_2}$ belong to $\A_3$: they are not regular in $w-z_2$ on pullback to $u_{(321)}=u_{(312)} = u_{(132)} =0$.) 
}
Of course, one could always simply \emph{define} the subalgebra $\A_{N+1}'$ to be the quotient of $\A_{N+1}$ by the kernel of $\iglob$ (a dg ideal). However, the following example illustrates a more profound disanalogy between the usual case and the ravioli case.

\begin{exmp}\label{example: OS-like elements}
Consider the case $N=2$, $N+1=3$. 
Let us write
\[ v_{12} \coloneq \iota_{\{1,2\}}(u_{(12)}) = u_{(123)} + u_{(132)} + u_{(312)}, \]
\[ v_{13} \coloneq \iota_{\{1,3\}}(u_{(12)}) = u_{(123)} + u_{(213)} + u_{(132)}, \]
\[ v_{23} \coloneq \iota_{\{2,3\}}(u_{(12)}) = u_{(123)} + u_{(213)} + u_{(231)} \] 
(and $v_{32} = 1- v_{23}$ etc.). 
  Consider the element 
  \[ \Omega_{12}:= \frac{ \dd v_{31}}{w-z_1} \wedge \frac{\dd v_{32}}{w-z_2} -
  \frac{ \dd v_{21}}{z_2-z_1} \wedge \frac{\dd v_{32}}{w-z_2} -
  \frac{ \dd v_{31}}{w-z_1} \wedge \frac{\dd v_{12}}{z_1-z_2} \in \A_3' .\]
It is nonzero. The singular part of its expansion in small $w-z_1$ is 
\[\dd v_{31} \wedge \left(\dd v_{32} - \dd v_{12} \right)  \frac{ 1}{w-z_1}\frac 1 {z_1-z_2} . 
\]
Consider the pullback of this to the copy of $\Delta^1 \times \Delta^1$ in $\Delta^3$ defined by the map $p_{3;1}$ above. We have, on this copy of $\Delta^1 \times \Delta^1$,
\begin{align*}
p_{3\to1}^*(v_{12}) &= 0 + (1-v) u_{(12)} +v u_{(12)} = u_{(12)} \\
p_{3\to1}^*(v_{13}) &= 0 + v u_{(21)} + v u_{(12)} = v (u_{(21)} + u_{(12)}) = v \\
p_{3\to1}^*(v_{23}) &= 0 + v u_{(21)} + (1-v) u_{(21)} =  u_{(21)}.
\end{align*}
Therefore this pullback vanishes:
\[ - \dd v \wedge \dd ( u_{(12)} - u_{(12)} ) \frac{ 1}{w-z_1}\frac 1 {z_1-z_2}  =0 .\]
A similar story holds for the expansion in small $w-z_2$. 


We conclude that the image of this element $\Omega_{12} \in \A_3'$ in $\A_2\sql w-z_1\sqr\oplus \A_2\sql w-z_2\sqr$, while nonzero, is nonsingular in the local variables $w-z_i$ in each summand, i.e. it lies in the subalgebra
$\A_2\sql w-z_1\sqr_+\oplus \A_2\sql w-z_2\sqr_+$. 
\end{exmp}

\begin{rem}
The element $\Omega_{12}$ is closed and we expect that it represents a nontrivial cohomology class in $\A_3$. However we expect that its image in $\A_2\sql w-z_1\sqr\oplus \A_2\sql w-z_2\sqr$ is exact. That is, we expect that $H(\Omega_{12})$ is in the kernel of the induced map of cohomologies. 
\end{rem}
\begin{rem}
It is interesting to note the superficial similarity of the elements of which $\Omega_{12}$ is an example, namely  
\[ \frac{ \dd v_{N+1,j}}{w-z_j} \wedge \frac{\dd v_{jk}}{z_j-z_k}  + \frac{ \dd v_{jk}}{z_j-z_k} \wedge \frac{\dd v_{k,N+1}}{z_k-w}  + \frac{ \dd v_{k,N+1}}{z_k-w} \wedge \frac{\dd v_{N+1,j}}{w-z_j}, \]
with the relations 
  \[ \frac{ \dd z_i - \dd z_j}{z_i-z_j} \wedge \frac{\dd z_j - \dd z_k}{z_j-z_k}  + \frac{ \dd z_j - \dd z_k}{z_j-z_k} \wedge \frac{\dd z_k - \dd z_i}{z_k-z_i}  + \frac{ \dd z_k - \dd z_i}{z_k-z_i} \wedge \frac{\dd z_i - \dd z_j}{z_i-z_j}  = 0 \]
  which hold in the de Rham complex of holomorphic forms on the usual configuration space $\Conf_N = \AA_\CC^N \setminus \{\textrm{diagonals}\}= \Spec \B_N$, and which are examples of \emph{Orlik-Solomon} relations associated to a hyperplane arrangement. 
  See \cite{SchecVarchArrangementsHyperplanesLie1991} and e.g. \cite[\S2]{VarchYoungCyclotomicDiscriminantalArrangements2019}. 
\end{rem}
  
As \cref{example: OS-like elements} shows, the image of $\a_-$ in $\a$ has nontrivial intersection with $\a_+$. Let us define $\a_0$ to be this intersection, a Lie subalgebra in dg $\A_N$-modules of $\a$:
\[ \a_0 := \iglob(\a_-) \cap \a_+ \subset \a.\]

\begin{rem} 
Recall that in the usual case reviewed in \cref{sec: usual rational conformal blocks}, we were able to arrange that $\b_- \cap \b_+ = 0$ in $\b$, and hence that $\b = \b_- \oplus \b_+$ was the direct sum of vector spaces, or of $\B_N$-modules. We did that by defining $\b_- = \g \ox \B_{N+1}'$, where $\B_{N+1}'$ was the nonunital subalgebra of $\B_{N+1}$ consisting of those rational expressions in $w$ vanishing as $w\to \8$; that ensured, in \cref{movablelrembeddings}, that the image of $\B_{N+1}'$ had trivial intersection with the image of $\bigoplus_{i=1}^N \B_N[[w-z_i]]$. 
In the construction of rational conformal blocks, \cite{FeigiFrenkResheGaudinModelBethe1994}, it is common to define $\B_{N+1}'$ that way, for precisely this reason. Doing so yields what is called the \emph{modified space of conformal blocks} in \cite[\S13]{FrenkelBenZvi}. 
If one weakens that restriction on $\b_-$ then one also has a non-trivial intersection $\b_- \cap \b_+ =: \b_0$ in the usual setting, and the space of coinvariants has a residual quotient, 
\[ \coinv(\g;\B_N;M_1,\dots,M_N) \cong (\B_N \ox M)\big/\b_0 .\] 
For example, if one insists only that elements of $\b_-$ be \emph{regular} as $w\to \8$, then one finds $\b_0 = \B_N \ox \g$, the Lie algebra of zero modes. In that case, the space of rational coinvariants is  isomorphic as a $\B_N$-module to the quotient by the diagonal action of $\g$ on $M= \bigotimes_{i=1}^NM_i$:
\[ \coinv(\g;\B_N;M_1,\dots,M_N) \cong \B_N \ox \bigl(M\big/\g\bigr) .\] 
\end{rem}

In our present setting, there seems to be no obvious way to avoid this non-trivial intersection $\a_0$.
By the PBW theorem (which holds since $\A_N \supset \QQ$) we get that
\begin{equation} U_{\A_N}(\a) \cong U_{\A_N}(\iglob(\a_-)) \ox_{U_{\A_N}(\a_0)} U_{\A_N}(\a_+) \label{PBW}\end{equation}
as $(U_{\A_N}(\iglob(\a_-)),U_{\A_N}(\a_+))$-bimodules in dg $\A_N$-modules. 

That has the following somewhat awkward consequence: $U_{\A_N}(\a)$ is not free as a $(U_{\A_N}(\a_-),U_{\A_N}(\a_+))$-bimodule in dg $\A_N$-modules, and therefore we have no reason to expect $U_{\A_N}(\a)$ to be cofibrant in that category with respect to its projective model structure, cf. \cref{rem: derived coinvariants} below.

\subsection{Ravioli coinvariants}\label{sec: induced modules and coinvariants}
Let now $M_i$ be any smooth module over the dg Lie algebra $\g \ox \CC\sql w-z_i\sqr_+$, for each $1\leq i\leq N$. (By \dfn{smooth} we mean, following the usual case in \cite[\S5.1.5]{FrenkelBenZvi}, that for all $m\in M_i$ the Lie ideal $\g \ox (w-z_i)^k\CC\sql w-z_i\sqr_+$ acts as zero on $m$, for all sufficiently large $k$.)

Then $\A_N \ox M_i$ is a smooth module over  $\g \ox \A_N\sql w-z_i\sqr_+$, for each $i$, and  
\[ \A_N \ox M \coloneq \A_N \ox \bigotimes_{i=1}^N M_i ,\]
is a left $U_{\A_N}(\a_+)$-module.
We have the induced module
\[ \mathscr M := U_{\A_N}(\a) \ox_{U_{\A_N}(\a_+)} (\A_N\ox M), \]
a left $U_{\A_N}(\a)$-module. It is equivalently, the tensor product (over $\A_N$) of the induced modules
\[ \mathscr M_i := U_{\A_N}(\g \ox \A_N\sql w-z_i\sqr) \ox_{U_{\A_N}(\g \ox \A_N\sql w-z_i\sqr_+)} (\A_N \ox M_i) \]
at the marked points. 

We may then define the space (more precisely, the dg $\A_N$-module) of \dfn{ravioli coinvariants} of $\g$ acting on $M_1,\dots,M_N$ to be
\[ \coinv(\g;\A_N;M_1,\dots,M_N) \coloneq \A_N \ox_{U_{\A_N}(\a_-)} U_{\A_N}(\a) \ox_{U_{\A_N}(\a_+)}  (\A_N\ox M) .\]
By the PBW theorem, \cref{PBW}, we have that
\begin{equation} \coinv(\g;\A_N;M_1,\dots,M_N) \cong \A_N \ox_{U_{\A_N}(\a_0)}  (\A_N\ox M) =:  (\A_N\ox M)/\a_0\label{def: coinv}\end{equation}
as left $\A_N$-modules. 

\begin{rem}\label{rem: derived coinvariants}
This definition has the merit of being relatively concrete (although we don't have an explicit description of $\a_0$). In principle however, one should really consider the derived tensor product
\begin{align} 
\A_N \lox_{U_{\A_N}(\a_-)} U_{\A_N}(\a) \lox_{U_{\A_N}(\a_+)} M ,\end{align}
which will be modelled by the tensor product
\[ \widetilde \coinv := \A_N \ox_{U_{\A_N}(\a_-)} QU_{\A_N}(\a) \ox_{U_{\A_N}(\a_+)} M \]
where $QU_{\A}(\a)$ is a cofibrant replacement of $U_{\A}(\a)$ in the category of $(U_{\A_N}(\a_-),U_{\A_N}(\a_+))$-bimodules in dg $\A_N$-modules equipped with its projective model structure \cite{HinichHomotopyAlgebras}.\footnote{That is, a $(U_{\A_N}(\a_-),U_{\A_N}(\a_+))$-bimodule in dg $\A_N$-modules is the same thing as a left dg $U_{\A_N}(\a_-)\ox_{\A_N}U_{\A_N}(\a_+^\mathrm{op}))$-module, so this is a special case of the model category structure on the category of (left) dg-modules over a (not necessarily graded-commutative) dg-algebra as defined in \cite{HinichHomotopyAlgebras}.} 
We expect \cref{thm: raviolo Y map} below to apply to (a suitable choice of model $\widetilde \coinv$ of) this derived space of coinvariants as well, but we do not show that here. 

\end{rem}

\section{Main result}\label{sec: main result}
We can now state the main result of the present paper, which says that the state-field map for the raviolo vacuum module $\mathscr V$, as we defined it in \cref{sec: raviolo state-field map}, emerges naturally when one considers appropriate limits of the spaces of coinvariants introduced in \cref{sec: induced modules and coinvariants} above. That is, \cref{prop: Y map} above continues to hold in the raviolo case, mutatis mutandis.

Indeed, let us again specialize to the case in which we insert a copy of the vacuum module at the $N-1$st and $N$th marked points, $M_{N-1} = \CC\vac$ and $M_N = \CC\vac$. 
By identifying local coordinates $w-z_i$ in the complex-algebraic direction, we may identify each of the dg Lie algebras $\g\ox \CC\sql w-z_i\sqr$ with a single copy $\g\ox \CC\sql s\sqr$, and thereby identify their vacuum Verma modules with a single abstract copy of $\mathscr V$:
\begin{equation} \mathscr V\cong \mathscr V_i \coloneq \Ind_{\g\ox \CC[[w-z_i]]}^{\g \ox \CC\sql w-z_i\sqr} \CC\vac. \nn\end{equation}
There is then an evident embedding map of dg vector spaces
\be \mathscr V_i \into 
U_{\A_N}(\g \ox \A_N\sql w-z_i\sqr) \ox_{U_{\A_N}(\g \ox \A_N\sql w-z_i\sqr_+)} \left(\A_N\vac\right) \nn\ee
coming from the unit map $1: \CC \into \A_N$. 

At the other sites, we pick arbitrary smooth modules $M_i$ as before. 
Pick vectors $m_i\in \mathscr M_i$ for $1\leq i\leq N-2$, and states $A,B\in \mathscr V$ in this abstract copy of $\mathscr V$. On taking coinvariants, we get the class
\begin{align} 
\bigl[ \atp{z_1}{m_1} \ox \dots \ox \atp{z_{N-2}}{m_{N-2}} \ox \atp{z_{N-1}}{B} \ox \atp{z_N}{A} \bigr] \in \coinv(\g;\mc A_N; M_1,\dots,M_{N-2},\CC,\CC) 
\end{align}
\begin{rem}\label{rem: interpretation}
One should keep in mind that, despite our attempt to make the notation as similar as possible to the usual case in \cref{coinvariant}, this object encodes a lot of information. 
It is a polynomial differential form on a simplex of dimension $N!-1$, whose pullbacks to certain faces of that simplex encode the behaviour in regimes in which some of the insertion points are ordered in particular ways in the topological direction whenever they collide in the complex plane. 
For example, the pullback to the vertex given by $u_{\sigma} =1$ encodes the behaviour in the regime in which \emph{all} the points are so ordered, in the particular total order $\sigma(1) <\dots<\sigma(N)$. 
\end{rem}

The raviolo vacuum module state-field map captures the behaviour of this coinvariant in the limit in which the $N$th marked point becomes close to the $(N-1)$st marked point, while remaining marked points, the vectors $m_1,\dots,m_{N-2}$, and the states $A, B$ are all held fixed.

Indeed, recall that $\A_N \simeq R\Gamma(\RavConf_N,\O)$ models the dg commutative algebra of derived sections of the structure sheaf on ravioli configuration space. We have the expansion map 
\[ \iota_{z_{N} \to z_{N-1}}: \A_N \to \A_{N-1}\sql z_N-z_{N-1}\sqr\] 
defined in the same way as $\iota_{z_{N+1} \to z_N}$ in \cref{sec: expansion-maps} above.

\begin{thm}[Relation of the raviolo state-field map $Y_\Rav$ to coinvariants]\label{thm: raviolo Y map}
  For all homogeneous states $A,B\in \mathscr V$ and vectors $m_i\in \mathscr M_i$, $1\leq i\leq N-2$, we have 
  \begin{align} &\iota_{z_{N}\to z_{N-1}}
  \bigl[ \atp{z_1}{m_1} \ox \dots \ox \atp{z_{N-2}}{m_{N-2}} \ox \atp{z_{N-1}}{B} \ox \atp{z_N}{A} \bigr] \nn\\
  &\quad\qquad
  = 
  (-1)^{|A||B|}\bigl[ \atp{z_1}{m_1} \ox \dots \ox \atp{z_{N-2}}{m_{N-2}} \ox  Y_\Rav(A;z_N-z_{N-1}) \!\!\atp{z_{N-1}}{B} \bigr] 
  \nn\end{align}
 where \[ Y_\Rav(-;x) : \mathscr V \to \Hom_{\mathbf{dgVect}_\CC}(\mathscr V, \mathscr V\sql x\sqr)\] is the raviolo state-field map defined in \cref{sec: raviolo state-field map}.
  \end{thm}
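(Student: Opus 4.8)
The plan is to adapt the proof of the classical \cref{prop: Y map} to the raviolo setting, organising everything as an induction on the number of lowering operators constituting the state $A$. Since both sides are $\CC$-linear in $A$ and $B$ and $\A_N$-multilinear in $m_1,\dots,m_{N-2}$, and since the PBW decomposition of \cref{sec: vacuum-module} gives $\mathscr V\cong U(\g\ox\CC\sql s\sqr_-)\vac$, it suffices to prove the identity for $A = X^1\cdots X^n\vac$ with the $X^i\in\g\ox\CC\sql s\sqr_-$ homogeneous, where $s$ is the local coordinate at the $N$th site. Under the identification $\mathscr V\cong\mathscr V_N$ these lowering operators act as the corresponding negative modes in $\a$ at the $N$th factor, so $\atp{z_N}{X^1\cdots X^n\vac} = X^1_{(z_N)}\cdots X^n_{(z_N)}\atp{z_N}{\vac}$. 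I would then induct on $n$.

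For the base case $n=0$, where $A=\vac$, the right-hand side collapses to the $(N-1)$-point coinvariant $\bigl[\atp{z_1}{m_1}\ox\cdots\ox\atp{z_{N-1}}{B}\bigr]$, since $Y_\Rav(\vac;x)=\id_{\mathscr V}$ and $(-1)^{|A||B|}=1$. The content here is a \emph{vacuum-propagation} statement: attaching the raviolo vacuum module at the auxiliary site $z_N$ and passing to coinvariants reproduces the $(N-1)$-point coinvariants, compatibly with the expansion map $\iota_{z_N\to z_{N-1}}$ (which on the image of $\A_{N-1}$ is simply the inclusion into $\A_{N-1}\sql z_N-z_{N-1}\sqr$). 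This is the raviolo analogue of the base-change isomorphism of \cref{lem: MM base change}; the essential point is that, with no lowering operators acting at $z_N$, the generator $\vac$ contributes trivially to every raviolo Ward identity, so the induced factor at $z_N$ decouples.

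For the inductive step, write $A = X A'$ with $X\in\g\ox\CC\sql s\sqr_-$ a single homogeneous lowering operator, so that $\atp{z_N}{XA'} = X_{(z_N)}\atp{z_N}{A'}$. The engine is the raviolo Ward identity provided by \cref{prop: a is a sum}: via the building-global-objects map $g$ of \cref{def: gk}, the mode $X_{(z_N)}$ lifts to a global section $\iglob(g(X_{(z_N)}))\in\iglob(\a_-)$ which annihilates coinvariants by the very definition \cref{def: coinv}. Writing $\Psi = \atp{z_1}{m_1}\ox\cdots\ox\atp{z_{N-1}}{B}\ox\atp{z_N}{A'}$, this gives the relation
\[
0 \;\equiv\; \iglob(g(X_{(z_N)}))\cdot\Psi \;=\; \sum_{j=1}^{N}\bigl(\text{expansion of }g(X_{(z_N)})\text{ at }z_j\bigr)\cdot\Psi
\]
modulo the image of $\a_-$. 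Solving for the $z_N$ term and applying $\iota_{z_N\to z_{N-1}}$, I would identify the part localised at $z_{N-1}$ with the normal-ordered combination $X_+(x)Y_\Rav(A';x) + (-1)^{|X||A'|}Y_\Rav(A';x)X_-(x)$ of \cref{lem: state-field recursion}, with $x = z_N-z_{N-1}$, acting on the merged state $Y_\Rav(A';x)B$ supplied by the induction hypothesis; the contributions at the spectator sites $z_1,\dots,z_{N-2}$ are reconciled between the two sides, since in the target $(N-1)$-point coinvariant the creation modes appearing in $Y_\Rav(A';x)B$ are themselves subject to the $(N-1)$-point Ward identities and spread to exactly those sites. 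Tracking the Koszul signs incurred in transposing the homogeneous $X^i$ past $B$ produces the prefactor $(-1)^{|A||B|}$.

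The hard part, and the genuine departure from the classical \cref{prop: Y map}, is the bookkeeping of the simplicial and differential-form data that the raviolo setting introduces. A raviolo lowering operator carries a factor $p(v,\dd v)$, and the crucial identity to verify is that composing the global lift $g$ — built from the section $q_s^*$ of \cref{sec: lie-algebra-splitting} — with the expansion $\iota_{z_N\to z_{N-1}}$ — built from the simplex pullback $p^*_{N\to N-1}$ of \cref{defprop: iota} — reproduces precisely the substitution $p(v,\dd v)\mapsto p(1-u,-\dd u)$ that distinguishes $X_-(x)$ from $X_+(x)$ in \cref{sec: raviolo state-field map}. Geometrically, this flip $v\mapsto 1-u$, $\dd v\mapsto -\dd u$ is the reversal of the topological/leaf ordering when the collision of $z_N$ with $z_{N-1}$ is viewed from the $(N-1)$st raviolo rather than the $N$th, and establishing it amounts to a careful pullback computation on the curved copies of $\Delta_\CC^1\times\Delta_\CC^{(N-1)!-1}$ inside $\Delta_\CC^{N!-1}$. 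I expect this form-level matching, rather than the mode algebra, to be the principal obstacle.
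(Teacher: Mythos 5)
Your toolkit is exactly the paper's: the Ward-identity engine given by the building-global-objects map $g_N$ of \cref{def: gk}, propagation of vacua for the base case (this is \cref{propagation of vacua raviolo}; it is not really an analogue of \cref{lem: MM base change}), functoriality of coinvariants under the change of base $\iotacob$ (\cref{lem: functoriality of raviolo coinvariants}), and the pullback computation showing that $q_N^*$ followed by $p_{N\to N-1}^*$ produces the flip $p(v,\dd v)\mapsto p(1-u,-\dd u)$ --- you are right that this last point is the genuinely raviolo-specific ingredient, and it is precisely what the paper's worked example computes. However, the way you interleave the induction with the expansion map has a genuine gap. After one application of the Ward identity to $A=XA'$, the vector at the $(N-1)$st site is no longer $B\in\mathscr V$ but $(\iotanear g_N(X))B$, an element of the induced module $\mathscr M_{N-1}$ with coefficients in $\A_N$ (depending on $z_N$ and on the simplex variables), and the spectator vectors likewise acquire $z_N$-dependence. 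The induction hypothesis --- the statement of \cref{thm: raviolo Y map} for $A'$ --- applies only when the $(N-1)$st entry is a pure state in $\mathscr V$, so it cannot be invoked on these terms; rescuing the step would require first defining an action of $Y_\Rav$ on general vectors of $\mathscr M_{N-1}$ (a raviolo module structure over $\mathscr V$), which is more machinery than the theorem itself. Relatedly, ``applying $\iota_{z_N\to z_{N-1}}$ and identifying the part localised at $z_{N-1}$'' tacitly exchanges $\iota_{z_N\to z_{N-1}}$ with the near-site expansion $\iota_{w\to z_{N-1}}$, but these do \emph{not} commute: the paper notes that $p^*_{N+1\to s}$ and $p^*_{N\to N-1}$ commute only for $s\leq N-2$ and fail to commute for $s=N-1$, a failure it describes as being at the heart of how vertex algebras work.

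The paper's proof avoids both problems by decoupling the two operations. First, a pure-algebra induction on $n$ establishes the full unshuffle swapping identity \cref{swapping raviolo}, in which \emph{all} of the $X^i$ are moved off the $N$th site at once (via $g_N$, $\iotafar$, $\iotanear$), leaving $\vac$ there; no expansion maps enter this induction, so no strengthened hypothesis is needed. Only then are propagation of vacua and the change-of-base $\iotacob$ applied, once, to the resulting expression; the expansion is commuted past the \emph{far} maps $\iotafar$ only, and the far contributions are swapped back onto the $(N-1)$st site coefficient-by-coefficient in $(z_N-z_{N-1})$, where smoothness of $\mathscr V$ guarantees that each coefficient involves only finitely many terms. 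One then recognizes $Y_\Rav$ through its closed unshuffle formula, \cref{lem: state-field recursion}, rather than through its one-step recursion. If you reorganize your induction so that it proves the swapping identity itself, with the expansion applied only at the end, your argument becomes the paper's.
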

  
\begin{proof}
  The proof is given in \cref{sec: proof of thm raviolo Y map} below.
\end{proof}


\subsection{Worked example}
Before we give the proof of \cref{thm: raviolo Y map} it is instructive to work through a simple example in detail.
Let $a\in \g$ and consider the state  
\[ \left(a \ox \frac{\dd v}{z}\right) \vac \in \mathscr V\]
in the vacuum module $\mathscr V$. Everything below will hold, mutatis mutandis, with $\dd v$ replaced by $v(1-v)$; cf. the discussion about cochains versus cohomology in \cref{sec: raviolo state-field map} above.

We are first to identify this abstract copy of $\mathscr V$ with the local copy associated to the $N$th marked point, by identifying the coordinate $z$ with the local coordinate $w-z_N$ at that point. Then we are to insert the state above into a generic coinvariant with some vectors $m_1,\dots,m_{N-2}$ at (what are about to be) the far marked points, and some state $B\in \mathscr V$ at (what is about to be) the nearby $N-1$st marked point. We obtain a coinvariant we shall call $f$, 
\begin{align} 
  f:= \bigl[ \atp{z_1}{m_1} \ox \dots \ox \atp{z_{N-2}}{m_{N-2}} \ox \atp{z_{N-1}}{B} \ox \left(a \ox \frac{\dd v}{w-z_N}\right) \atp{z_N}{\vac} \bigr] \in \A_N \ox M. \label{example raviolo coinvariant}
\end{align}
Our aim is to now to ``swap'' the lowering operator onto the other sites. The first step is to identify the lowering operator $a \ox \frac{\dd v}{w-z_N}$ as the expansion at this $N$th site of some element of the global dg Lie algebra. To do that, we  apply the map $g_N$ of \cref{def: gk} to the element 
\[ a \ox \frac{\dd v}{w-z_N} \in \g \ox \CC\sql w-z_N\sqr_- \into \g \ox \A_N\sql w-z_N\sqr_-.\]
We have, cf. \cref{qdef},
\[ q_N^*(v) = \iota_{\{N,N+1\} \subset [1,N+1]} (v) = \sum_{\sigma \in S_{N+1}^{N,N+1}} u_\sigma \]
and so we find that
\[ G:= g_N(a \ox \frac{\dd v}{w-z_N})  = a \ox \frac{\dd \sum_{\sigma \in S_{N+1}^{N,N+1}} u_\sigma}{w-z_N} \in \g\ox\A_{N+1}'\]
where we have introduced a name, $G$, for this element. 
By definition of coinvariants, we have
\begin{align*} - f &= \sum_{i=1}^{N-2} (-)^* \bigl[ \atp{z_1}{m_1} \ox \dots \ox \left(\iota_{w\to z_i} G \right)\atp{z_i}{m_i} \ox \dots \ox \atp{z_{N-2}}{m_{N-2}} \ox \atp{z_{N-1}}{B} \ox \atp{z_N}{\vac} \bigr] \\
&\qquad+ (-)^{|B|} \bigl[  \atp{z_1}{m_1} \ox \dots \ox \atp{z_{N-2}}{m_{N-2}} \ox \left(\iota_{w\to z_{N-1}} G \right)  \atp{z_{N-1}}{B} \ox \atp{z_N}{\vac}    \bigr] \end{align*}
Here $(-)^*$ are certain Koszul signs, which we do not write out in full.

We are to take the expansion $\iota_{z_N\to z_{N-1}}f \in \A_{N-1}\sql z_N-z_{N-1}\sqr \ox M$ of this element $f \in \A_N \ox M$. 
As we shall discuss in \cref{sec: functoriality of raviolo coinvariants} below, coinvariants are suitably functorial, so that
\begin{align} - \iota_{z_{N} \to z_{N-1}} f &= \sum_{i=1}^{N-2}(-)^{*}\bigl[ \atp{z_1}{m_1} \ox \dots \ox \left(\iota_{z_N\to z_{N-1}}\iota_{w\to z_i} g \right)\atp{z_i}{m_i} \ox \dots \ox \atp{z_{N-2}}{m_{N-2}} \ox \atp{z_{N-1}}{B} \ox \atp{z_N}{\vac} \bigr] \nn\\
  &\qquad+ (-1)^{|B|} \bigl[  \atp{z_1}{m_1} \ox \dots \ox \atp{z_{N-2}}{m_{N-2}} \ox \left(\iota_{z_N\to z_{N-1}}\iota_{w\to z_{N-1}} g \right)  \atp{z_{N-1}}{B} \ox \atp{z_N}{\vac}    \bigr] \label{example coinvariant}\end{align}
Now let us actually compute these expansions of $G$ at the other sites. Recall the definition of the expansion map, \cref{defprop: iota}. For every $s\in \{1,\dots,N-1\}$ we have
\begin{align} p_{N+1\to s}^* \left(\sum_{\sigma \in S_{N+1}^{N,N+1}} u_\sigma \right) &= v \sum_{\sigma \in S_N^{N,s}} u_\sigma + (1-v) \sum_{\sigma \in S_N^{N,s}} u_\sigma = \sum_{\sigma \in S_N^{N,s}} u_\sigma \label{example of p}\end{align}
Thus, in particular, the expansion of $G$ at the $(N-1)$st site is given by
\[ \iota_{w\to z_{N-1}} G = - \sum_{k=0}^\8 \frac{ \dd \sum_{\sigma \in S_N^{N,N-1}} u_\sigma}{ (z_N-z_{N-1})^{k+1}} \left(a  \ox (w-z_{N-1})^k\right). \]
We are working in dg $\A_N$-modules, so that the factor $\frac{ \dd \sum_{\sigma \in S_N^{N,N-1}} u_\sigma}{ (z_N-z_{N-1})^{k+1}}$ is a scalar. Let us apply the expansion map $\iota_{z_N\to z_{N-1}}$ to this scalar prefactor. 
Note that  
\[ p_{N\to N-1}^*  \sum_{\sigma \in S_N^{N,N-1}} u_\sigma = (1-u) \sum_{\sigma \in S_{N-1}} u_\sigma  = (1-u).\] 
Here we write $u$ here rather than $v$ for the coordinate of 
\begin{align} \CC\sql z_{N}-z_{N-1} \sqr &:= \bigl\{ \omega \in \CC((z_N-z_{N-1})) \ox \CC[u,\dd u] \nn\\ &\qquad : \omega|_{u=0} \in \CC[[z_N-z_{N-1}]] \text{ and } \omega|_{u=1} \in \CC[[z_N-z_{N-1}]] \bigr\} \nn\end{align}
to avoid a clash with the coordinate $v$ of e.g. $\CC\sql w-z_s \sqr$. Note that the above is consistent with the fact that $\sum_{\sigma \in S_N^{N,N-1}} u_\sigma= \iota_{\{N-1,N\} \subset [1,N]}(1-u)$; see \cref{def: iota j} for the definition of $\iota_{\{N-1,N\} \subset [1,N]}$.
We obtain that 
\[ \iota_{z_N\to z_{N-1}} \frac{ \dd \sum_{\sigma \in S_N^{N,N-1}} u_\sigma}{ (z_N-z_{N-1})^{k+1}} = 1 \frac{\dd (1-u)}{(z_N-z_{N-1})^{k+1}}  \in \A_{N-1}\sql z_N-z_{N-1}\sqr .\]

Next let us consider the expansions of $g$ at the far sites. Crucially, for each $i\leq N-2$, the operations $\iota_{z_N-z_{N-1}}$ and $\iota_{w\to z_i}$ commute. This is true for the the coefficients in $\CC[z_i,w,(z_i-z_j)^{-1},(w-z_i)^{-1}]_{1\leq i,j\leq N; i\neq j} \equiv \B_{N+1}$ just as in the usual case; and for the forms on the simplex, one checks (similarly to \cref{example of p}) that on the one hand
\[ p_{N\to N-1}^* p_{N+1\to i}^* \left(\sum_{\sigma \in S_{N+1}^{N,N+1}} u_\sigma \right) = \sum_{\sigma \in S_N^{N,i}} u_\sigma = \sum_{\sigma\in S_{N-1}^{N-1,i}} u_\sigma\]
while on the other hand
\[ p_{N+1\to i}^* p_{N\to N-1}^*  \left(\sum_{\sigma \in S_{N+1}^{N,N+1}} u_\sigma \right) = \sum_{\sigma \in S_{\{1,\dots,N-1,N+1\}}^{N-1,N+1}} u_\sigma 
= \sum_{\sigma \in S_{N-1}^{N-1,i}} u_\sigma . \]
(We don't get such agreement when $i=N-1$, which is reassuringly consistent with the fact that the usual expansion maps $\iota_{w\to z_{N-1}}$ and $\iota_{z_N\to z_{N-1}}$ certainly do not commute -- indeed this failure is in some sense at the heart of how vertex algebras work.)

Thus, for each $i\leq N-2$, we have
\( \iota_{z_N\to z_{N-1}}\iota_{w\to z_i} G = \iota_{w\to z_i}\iota_{z_N\to z_{N-1}} G 
 \)
and here
\[ \iota_{z_N-z_{N-1}} G 
= + \sum_{k=0}^\8 (z_N-z_{N-1})^k \ox \left( a \otimes \frac{ \dd \sum_{\sigma \in S_{\{1,\dots,N-1,N+1\}}^{N-1,N+1}} u_\sigma}{ (w-z_{N-1})^{k+1}}\right) . \]
We recognize the terms in this expansion as elements of the global Lie algebra, and by definition of coinvariants we obtain that
\begin{align}  \iota_{z_{N} \to z_{N-1}} f &= 
   \sum_{k=0}^\8  \bigl[ \atp{z_1}{m_1} \ox \dots \ox \atp{z_{N-2}}{m_{N-2}} \ox (-)^{|B|} (z_N-z_{N-1})^k \left( a\ox \frac{\dd v}{ (w-z_{N-1})^{k+1}}\right)\atp{z_{N-1}}{B} \bigr] \nn\\
  &\qquad+ \sum_{k=0}^\8 \bigl[  \atp{z_1}{m_1} \ox \dots \ox \atp{z_{N-2}}{m_{N-2}} \ox (-)^{|B|} \frac{\dd (1-u)}{(z_N-z_{N-1})^{k+1}} \left( a \ox (w-z_{N-1})^k\right)  \atp{z_{N-1}}{B}     \bigr] \nn\end{align}
(Here we dropped the vacuum state at the $N$th site, cf. \cref{sec: propagation of vacua raviolo} below.)

We recognize the first and second lines here as, respectively, the $(-)_+$ and $(-)_-$ parts of the raviolo mode expansion of the state $(a \ox \frac{\dd v}{w-z_N})\vac$ with which we began, as we defined it in \cref{sec: raviolo state-field map}.

\section{Proof of \texorpdfstring{\cref{thm: raviolo Y map}}{Theorem 18}}\label{sec: proof of thm raviolo Y map}
\addtocontents{toc}{\protect\setcounter{tocdepth}{1}}

In this section we prove \cref{thm: raviolo Y map}, namely that the raviolo state-field map $Y_\Rav$ from \cref{sec: raviolo state-field map} emerges naturally when one considers appropriate limits of the spaces of ravioli coinvariants introduced in \cref{sec: induced modules and coinvariants}. 

To separate concerns, we shall first warm up by rehearsing a proof of the analogous statement in the usual case, \cref{prop: Y map}. Our approach is similar to that of \cite{VicedYoungVertexLieAlgebras2017}.

\subsection{Proof of \texorpdfstring{\cref{prop: Y map}}{Theorem 3}}\label{sec: proof: Y map}

We first need to recall a functoriality property of coinvariants and the property known as propagation of vacua. 

\subsubsection{Functoriality of coinvariants}\label{sec: functoriality of coinvariants}
Let us consider certain spaces of coinvariants with $N-1$ movable marked points. 
The construction of coinvariants in \cref{sec: usual rational conformal blocks} of course goes through with $N-1$ in place of $N$, yielding the $\B_{N-1}$-module
\[ \coinv(\g;\B_{N-1};M_1,\dots,M_{N-1}).\]
But we may also choose to work over $\B_{N}$, or over $\B_{N-1}((z_N-z_{N-1}))$, i.e. to allow our coefficient functions to depend in some prescribed way on the formal variable $z_N$, even though there are now only modules assigned to the points $z_1,\dots,z_{N-1}$. More precisely, we may consider the following cospans of commutative algebras
\begin{equation}
\begin{tikzcd}
 \B_N[w,(w-z_i)^{-1}]'_{1\leq i\leq N-1} \dar[hook] \rar[hook] & \dar[hook]\B_{N-1}((z_N-z_{N-1}))[w,(w-z_i)^{-1}]'_{1\leq i\leq N-1} \\
\bigoplus_{j=1}^{N-1} \B_N((w-z_j)) \rar[hook] &\bigoplus_{j=1}^{N-1} \B_{N-1}((z_N-z_{N-1}))((w-z_j))  \\
\bigoplus_{j=1}^{N-1} \B_N[[w-z_j]] \uar[hook] \rar[hook] & \uar[hook] \bigoplus_{j=1}^{N-1} \B_{N-1}((z_N-z_{N-1}))[[w-z_j]] 
\end{tikzcd}
\label{triples}\end{equation}
-- in $\B_N$-modules on the left and in $\B_{N-1}((z_N-z_{N-1}))$-modules on the right. 
We obtain corresponding spaces of coinvariants which we denote respectively as 
\begin{equation} \coinv(\g;\B_N;M_1,\dots,M_{N-1} ) \quad\text{and}\quad \coinv(\g;\B_{N-1}((z_N-z_{N-1}));M_1,\dots,M_{N-1} ). \nn\end{equation}
Moreover the algebra map  
\begin{equation} \B_N \to \B_{N-1}((z_N-z_{N-1})) \nn\end{equation}  
given by expanding in small $z_N-z_{N-1}$ for fixed $z_1,\dots,z_{N-1}$ allows us to change base ring, in the sense that it induces the embeddings of commutative algebras in $\B_{N-1}$-modules shown as horizontal arrows in the diagram above. 
Let us use $\iotacob$ for that change-of-base map. 
The diagram above commutes. 
In this way, one has the following functoriality property of coinvariants. 

\begin{lem}[Base change commutes with taking coinvariants]\label{lem: functoriality of coinvariants}
The following diagram of $\B_{N-1}$-modules commutes:
\begin{equation}\begin{tikzcd} \bigotimes_{i=1}^{N-1}\MM_i \rar\dar & \bigotimes_{i=1}^{N-1} \iotacob\MM_i \dar \\
\coinv (\g;\B_N;M_1,\dots,M_{N-1} ) \dar{\cong} & \coinv(\g;\B_{N-1}((z_N-z_{N-1}));M_1,\dots,M_{N-1} ) \dar{\cong} \\
\B_N \ox M \rar& \B_{N-1}((z_N-z_{N-1})) \ox M
  \end{tikzcd}\nn
\end{equation}
\qed
\end{lem}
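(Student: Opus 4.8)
The plan is to recognise the whole diagram as the image of a single ring homomorphism under the functorial coinvariants construction of \cref{sec: movable punctures in one complex dimension}. Expansion in small $z_N - z_{N-1}$ is a homomorphism of commutative algebras $\iotacob : \B_N \to \B_{N-1}((z_N-z_{N-1}))$, and extension of scalars along it, $\iotacob^* := \B_{N-1}((z_N-z_{N-1})) \ox_{\B_N} (-)$, is a strong monoidal functor from $\B_N$-modules to $\B_{N-1}((z_N-z_{N-1}))$-modules which, being a left adjoint, commutes with all colimits and relative tensor products. The three rows of the diagram are the source tensor products, the coinvariants, and their canonical trivialisations; the two columns are the $\B_N$- and $\B_{N-1}((z_N-z_{N-1}))$-versions; and the horizontal maps are all induced by $\iotacob$. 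Since every construction entering the definition of coinvariants is built from universal envelopes, induced modules and quotients, all of which commute with $\iotacob^*$, commutativity of the diagram reduces to checking that $\iotacob$ is compatible with the underlying cospan of Lie algebras.

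First I would verify that compatibility. The diagram \cref{triples} commutes by construction: it records that Laurent-expanding a rational function first in $w-z_j$ and then in $z_N-z_{N-1}$ agrees with doing so in the opposite order, away from the excluded diagonals. Tensoring \cref{triples} with $\g$ exhibits a morphism from the cospan of \cref{manin b} over $\B_N$ into the corresponding cospan over $\B_{N-1}((z_N-z_{N-1}))$, compatibly with the vector-space splitting \cref{movabledecomp}. Applying $U_{(-)}(-)$ and forming induced modules (recall \cref{lem: MM base change}, which identifies $\iotacob\MM_i$ with the base change of $\MM_i$) is then functorial, so the map taking a vector to its class in coinvariants intertwines base change on the source with base change on the target; this is the commutativity of the top square.

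Next, the bottom square is the naturality of the canonical isomorphism $\coinv \cong (\text{ground ring})\ox M$ of \cref{floating coinvariants}. That isomorphism is assembled entirely from the PBW bimodule decomposition $U_{\B_N}(\b) \cong U_{\B_N}(\b_-)\ox_{\B_N} U_{\B_N}(\b_+)$ and the resulting freeness of the induced module. Because $\iotacob^*$ is monoidal and carries the $\B_N$-cospan to the $\B_{N-1}((z_N-z_{N-1}))$-cospan, it transports this PBW decomposition to the corresponding one over $\B_{N-1}((z_N-z_{N-1}))$, so the trivialisation is preserved on the nose; this yields commutativity of the bottom square, and hence of the entire diagram.

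The step I expect to require the most care is precisely the compatibility of the two distinct expansions encoded in \cref{triples}: one must be certain that the nested operations $\iota_{w\to z_j}$ and $\iotacob$ genuinely commute for $j \le N-1$, i.e.\ that passing to the formal neighbourhood of a \emph{different} marked point $z_j$ introduces no new poles along the diagonal $z_N = z_{N-1}$. This is the same transversality that makes \cref{triples} well defined, and it is exactly the property that fails for $j = N-1$ — which is why a nontrivial state-field map, rather than an identity, must appear in the collision limit of \cref{prop: Y map}.
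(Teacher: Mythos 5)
Your proof is correct and matches the paper's (implicit) reasoning: the paper states this lemma with no proof at all, treating it as immediate from the commutativity of \cref{triples} and the fact that universal envelopes, induced modules and the PBW trivialisation are all functorial under the base change $\iotacob$ — precisely the points you make explicit, including the identification of $\iotacob\MM_i$ with the base change of $\MM_i$ via \cref{lem: MM base change}. One clarification to your closing paragraph: the commutation of $\iota_{w\to z_j}$ with $\iotacob$ that this lemma actually needs holds for \emph{all} $j\leq N-1$, including $j=N-1$, because every element of $\B_N[w,(w-z_i)^{-1}]'_{1\leq i\leq N-1}$ is regular at $w=z_N$; what genuinely fails for $j=N-1$ is commutation on $\B_{N+1}'$, whose elements may have poles at $w=z_N$ — that failure is what forces a nontrivial state-field map into \cref{prop: Y map}, but it plays no role in the present lemma, so your remark should not be read as casting doubt on the case $j=N-1$ used here.
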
 

Let us stress that in the horizontal maps in \cref{triples} above, we are merely performing a change of base ring. 
By contrast, we reserve the notation $ \iota_{z_N\to z_{N-1}}$ for the algebra map which expands in small $z_N-z_{N-1}$ for fixed $z_1,\dots,z_{N-1}$ \emph{and} $w\equiv z_{N+1}$. Thus, for example, 
\be \iota_{z_N\to z_{N-1}}: \B_{N+1}' \to \B_{N-1}[w,(w-z_j)^{-1}]_{1\leq j\leq N-1}((z_N-z_{N-1})) \nn\ee 
sends the element $1/(w-z_N)$ to its expansion $\sum_{k=0}^\8 \frac{(z_N-z_{N-1})^k}{(w-z_{N-1})^{k+1}}$. 

One should keep in mind that the Laurent-expansion maps $\iota_{w\to z_{N-1}}$ and $\iota_{z_N\to z_{N-1}}$ do not commute. For example, they fail to commute when applied to $1/(w-z_N)$. (Indeed, they map from $\B_{N+1}$ into different rings. 
In some sense, this fact is central to the notion of vertex algebras: see e.g. the discussion in \cite[\S1]{FrenkelBenZvi}.)  
On the other hand, for all $i\leq N-2$, the Laurent expansion maps  $\iota_{w \to z_i}$ and $\iota_{z_N \to z_{N-1}}$ do commute.

\subsubsection{Propagation of vacua}\label{sec: propagation of vacua}
When $M_{N} = \CC$, there is a canonical identification, of $\B_N$-modules, between our initial space of coinvariants with $N$ marked points and one with only $N-1$ marked points:\footnote{The reader will notice that while the choice $M_{N}=\CC$ is crucial here, the choice $M_{N-1}=\CC$ actually plays no role. And indeed, the construction goes through more generally, and yields the structure of $\MM_{N-1}$ as a \dfn{module over the vertex algebra} $\VV$.}
\begin{equation} \coinv(\g;\B_N;M_1,\dots,M_{N-2},\CC,\CC) \cong \B_N \ox M \cong \coinv(\g;\B_N;M_1,\dots,M_{N-2},\CC). \nn\end{equation}
One has, moreover, the following equality
\begin{equation} 
\bigl[ \atp{z_1}{m_1} \ox \dots \ox \atp{z_{N-2}}{m_{N-2}} \ox \atp{z_{N-1}}{B} \ox \atp{z_N}{\vac} \bigr] 
= 
\bigl[ \atp{z_1}{m_1} \ox \dots \ox \atp{z_{N-2}}{m_{N-2}} \ox  \atp{z_{N-1}}{B} \bigr]. 
\label{propagation of vacua}\end{equation}
This property is an example of what is sometimes called \dfn{propagation of vacua}. 

\subsubsection{Completion of the proof of \cref{prop: Y map}} 
The equality in \cref{propagation of vacua} establishes the statement of \cref{prop: Y map} in the special case that $A= \vac$ is the vacuum state.

Next we shall show that for all states $A\in \VV$, the class
\begin{equation} \bigl[\atp{z_1}{m_1} \ox \dots \ox \atp{z_{N-2}}{m_{N-2}} \ox \atp{z_{N-1}} B \ox \atp{z_N} A \bigr] \in \coinv(\g;\B_N;M_1,\dots,M_{N-2},\CC,\CC) \cong \B_N \ox M \nn\end{equation}
has a representative of the form
\begin{equation} \sum_i \bigl[A_i^{(-)} \on (\atp{z_1}{m_1} \ox \dots \ox \atp{z_{N-2}}{m_{N-2}}) \ox A_i^{(+)} \on \atp{z_{N-1}}{B} \ox \atp{z_N} \vac \bigr] \nn\end{equation}
for some finite sum over $i$ and for certain $A_i^{(-)}$ and $A_i^{(+)}$ belonging to $U(\g \ox (w-z_N)^{-1}\CC[(w-z_N)^{-1}])$. Here, when we write $A_i^{(-)} \on (m_1 \ox \dots \ox m_{N-2})$, the action is by definition via the embedding 
\begin{align*}
\iotafar: \g \ox (w-z_N)^{-1}\CC[(w-z_N)^{-1}]
  &\into \g \ox (w-z_N)^{-1}\B_N[(w-z_N)^{-1}]\\
  &\into \bigoplus_{i=1}^{N-2} \g \ox \B_N[[w-z_i]].
\end{align*}
Call this embedding $\iotafar$. Similarly when we write $A_i^{(+)} \on B$, the action is via the embedding into $\g \ox \B_N[[w-z_{N-1}]]$ which we shall call $\iotanear$:
\begin{align*}
\iotanear: \g \ox (w-z_N)^{-1}\CC[(w-z_N)^{-1}] &\into \g \ox (w-z_N)^{-1}\B_N[(w-z_N)^{-1}]\\
&\into \g \ox \B_N[[w-z_{N-1}]].
\end{align*}
Indeed, we may suppose
\begin{equation} A = X^1_{-k_1} \dotsm X^n_{-k_n} \vac\nn\end{equation}
for some number $n\in \ZZ_{\geq 0}$ of elements $X^i\in\g$ and mode numbers $-k_i\in \ZZ_{< 0}$. Here $X_k \coloneq X\ox (w-z_N)^k$. (Such states $A$ span $\VV$ as a $\CC$-vector space.)
By a straightforward induction on $n$, one checks that 
\begin{align}& \bigl[\atp{z_1}{m_1} \ox \dots \ox \atp{z_{N-2}}{m_{N-2}} \ox \atp{z_{N-1}} B \ox \atp{z_N} A \bigr] \nn\\
&= (-1)^n \sum_{m=0}^n   \sum_{(\mu,\nu) \in \Unshf m n}  \bigl[ (\iotafar \overleftarrow A_\mu) (\atp{z_1}{m_1} \ox \dots \ox \atp{z_{N-2}}{m_{N-2}}) \ox (\iotanear\overleftarrow A_\nu) \!\!\atp{z_{N-1}}{B} \ox \atp{z_N} \vac \bigr] \label{swapping}
\end{align}
where the inner sum is over unshuffles, i.e. permutations $(\mu_1,\dots,\mu_m,\nu_1,\dots,\nu_{n-m})$ of $(1,\dots,n)$ such that $\mu_1<\dots<\mu_m$ and $\nu_1<\dots<\nu_{n-m}$, and where we write 
\begin{equation} \overleftarrow A_\mu \coloneq X^{\mu_m}_{-k_{\mu_m}} \dotsm X^{\mu_1}_{-k_{\mu_1}} ,\qquad
    \overleftarrow A_\nu \coloneq X^{\nu_{n-m}}_{-k_{\nu_{n-m}}} \dotsm X^{\nu_1}_{-k_{\nu_1}} .\nn\end{equation}
Then by propagation of vacua as in \cref{propagation of vacua}, we may regard the right-hand side in \cref{swapping} as an element 
of the space of coinvariants $\coinv(\g;\B_N;M_1,\dots,M_{N-2},\CC)$. 
The equality in \cref{swapping} is in $\B_N \ox M$. We may apply the change-of-base map $\iotacob$ to both sides to obtain the equality
\begin{align}& \iotacob \bigl[\atp{z_1}{m_1} \ox \dots \ox \atp{z_{N-2}}{m_{N-2}} \ox \atp{z_{N-1}} B \ox \atp{z_N} A \bigr] \label{iotadswapping}\\
&=  (-1)^n \sum_{m=0}^n   \sum_{(\mu,\nu) \in \Unshf m n}  \iotacob\bigl[(\iotafar\overleftarrow A_\mu) (\atp{z_1}{m_1} \ox \dots \ox \atp{z_{N-2}}{m_{N-2}}) \ox ( \iotanear\overleftarrow A_\nu) \atp{\!\!\!z_{N-1}\!\!\!}{B}  \bigr]\nn\\
&=  (-1)^n \sum_{m=0}^n   \sum_{(\mu,\nu) \in \Unshf m n}  \bigl[\iotacob(\iotafar\overleftarrow A_\mu) (\atp{z_1}{m_1} \ox \dots \ox \atp{z_{N-2}}{m_{N-2}}) \ox \iotacob(\iotanear\overleftarrow A_\nu) \atp{\!\!\!z_{N-1}\!\!\!}{B}  \bigr]\nn\\
&=  (-1)^n \sum_{m=0}^n   \sum_{(\mu,\nu) \in \Unshf m n}  \bigl[(\iotacob\iotafar\overleftarrow A_\mu) \iotacob(\atp{z_1}{m_1} \ox \dots \ox \atp{z_{N-2}}{m_{N-2}}) \ox \iotacob(\iotanear\overleftarrow A_\nu) \atp{\!\!\!z_{N-1}\!\!\!}{B}  \bigr]\nn
\end{align}
in $\B_{N-1}((z_N-z_{N-1}))\ox M$. In the second step, we used the functoriality of coinvariants, \cref{lem: functoriality of coinvariants}.

It remains to show that this expression is equal to the expression on the right-hand side in the statement of  \cref{prop: Y map}. The latter is, first and foremost, a formal series in $(z_N-z_{N-1})^{\pm 1}$ whose coefficients belong to the space of coinvariants $\coinv(\g;\B_{N-1};M_1,\dots,M_{N-2},\CC) \cong_{\B_{N-1}} \B_{N-1} \ox M$:
\begin{align} 
&\bigl[ \atp{z_1}{m_1} \ox \dots \ox \atp{z_{N-2}}{m_{N-2}} \ox  Y(A;z_N-z_{N-1}) \!\!\atp{z_{N-1}}{B} \bigr] \nn\\
&
\qquad= \sum_{k\in \ZZ} (z_N-z_{N-1})^{-k-1}  \bigl[ \atp{z_1}{m_1} \ox \dots \ox \atp{z_{N-2}}{m_{N-2}} \ox A_{(k)} \!\!\atp{z_{N-1}}{B} \bigr] 
\nn\end{align}
Smoothness of the module $\VV$ ensures that for each fixed $A,B\in \VV$ this is in fact a formal Laurent series, i.e. $A_{(k)} B = 0$ for $k\gg 0$. Thus, it certainly belongs in $(\B_{N-1} \ox M)((z_N-z_{N-1}))$. 
To show that it is equal to the expression in \cref{iotadswapping} we must show that for each $k\in \ZZ$, the coefficients of $(z_N-z_{N-1})^k$ agree. Consider any term $(\mu,\nu)$ in the sum in \cref{iotadswapping}. 
We have 
\[ \iotacob \iotafar \overleftarrow A_\mu = \iota_{z_N\to z_{N-1}} \iotafar \overleftarrow A_\mu =  \iotafar \iota_{z_N\to z_{N-1}} \overleftarrow A_\mu. \]

Since $(\iotanear\overleftarrow A_\nu) B \in (z_N-z_{N-1})^{-1}\B_{N-1}[(z_N-z_{N-1})^{-1}]\ox \VV$, only finitely many terms in the series $\iota_{z_N\to z_{N-1}} \overleftarrow A_\mu$ contribute to the overall coefficient of $(z_N-z_{N-1})^k$.  The coefficients of these finitely many terms belong to $U(\g \ox \B_{N-1}[w,(w-z_{N-1})^{-1}]')$, and we can swap them over to the module at the marked point $z_{N-1}$, by definition of the space of coinvariants. After doing so we obtain
\begin{align}&  (-1)^n \sum_{m=0}^n (-1)^m  \!\!\!\!\sum_{\!\!(\mu,\nu) \in \Unshf m n\!\!}  \bigl[\atp{z_1}{m_1} \ox \dots \ox \atp{z_{N-2}}{m_{N-2}} \ox 
(\iotanear \iota_{z_N\to z_{N-1}} \overrightarrow A_\mu) (\iotacob \iotanear\overleftarrow A_\nu) \atp{\!\!\!z_{N-1}\!\!\!}{B} \bigr]\nn
\end{align}
where $\overrightarrow A_\mu \coloneq X^{\mu_1}_{-k_{\mu_1}} \dots X^{\mu_m}_{-k_{\mu_m}}$. Here, we recognize the state-field map $Y$:
\begin{equation} \sum_{m=0}^n (-1)^{n-m}  \sum_{(\mu,\nu) \in \Unshf m n} (\iotanear \iota_{z_N\to z_{N-1}} \overrightarrow A_\mu) (\iotacob \iotanear\overleftarrow A_\nu)B = Y(A,z_N-z_{N-1})B .\nn\end{equation}
(This expression can be checked by induction on $n$.)

\subsection{Proof of \texorpdfstring{\cref{thm: raviolo Y map}}{Theorem 16}} 
Now we return to the proof of \cref{thm: raviolo Y map}. 
Where possible, we shall try to follow the proof above almost word-for-word.

We begin by establishing a raviolo analogue of the functoriality property of coinvariants, \cref{lem: functoriality of coinvariants}. 

\subsubsection{Functoriality of raviolo coinvariants}\label{sec: functoriality of raviolo coinvariants}
Once again, let us consider certain spaces of coinvariants with $N-1$ movable marked points. 
We certainly have the dg $\A_{N-1}$-module
\[ \coinv(\g;\A_{N-1};M_1,\dots,M_{N-1}).\]
But we may also choose to work over $\A_{N}$, or over $\A_{N-1}\sql z_N-z_{N-1}\sqr$. More precisely, we may consider the following cospans of commutative algebras
\begin{equation}
\begin{tikzcd}
 \A_N\{w,(w-z_i)^{-1}\}'_{1\leq i\leq N-1} \dar[hook] \rar[hook] & \dar[hook]\A_{N-1}\sql z_N-z_{N-1}\sqr\{w,(w-z_i)^{-1}\}'_{1\leq i\leq N-1} \\
\bigoplus_{j=1}^{N-1} \A_N\sql w-z_j\sqr \rar[hook] &\bigoplus_{j=1}^{N-1} \A_{N-1}\sql z_N-z_{N-1}\sqr\sql w-z_j\sqr  \\
\bigoplus_{j=1}^{N-1} \A_N[[w-z_j]] \uar[hook] \rar[hook] & \uar[hook] \bigoplus_{j=1}^{N-1} \A_{N-1}\sql z_N-z_{N-1}\sqr[[w-z_j]] 
\end{tikzcd}
\label{raviolo triples}\end{equation}
-- in dg $\A_N$-modules on the left and in dg $\A_{N-1}\sql z_N-z_{N-1}\sqr$-modules on the right. Here, the algebras appearing in the top line are defined in close analogy with the definition of $\A_{N+1}'$ in \cref{sec: expansion-maps}. Namely, we first let $\A_N\{w,(w-z_i)^{-1}\}_{1\leq i\leq N-1}$ denote the commutative algebra in dg $\A_N$-modules given by the subalgebra of $\A_{N+1}$ consisting of forms which are regular in $w-z_N\equiv z_{N+1}-z_N$ everywhere:
\begin{align} &\A_N\{w,(w-z_i)^{-1}\}_{1\leq i\leq N-1}\nn\\ &\coloneq \bigl\{ \omega \in \B_N[w,(w-z_i)^{-1}]_{1\leq i\leq N-1} \ox \CC[u_\sigma, \dd u_\sigma]_{\sigma \in S_{N+1}}\big/ \langle \sum_{\sigma\in S_{N+1}} u_\sigma - 1, \sum_{\sigma\in S_{N+1}} \dd u_\sigma \rangle \nn\\
  &\qquad:\text{for all distinct $i,j\in [1,N+1]$, the pullback $\omega|_{\{u_\sigma=0 \forall \sigma\in S_{N+1}^{ij}\}}$ }\nn\\&\qquad\qquad\text{is regular in $z_i-z_j$ } \bigr\}.
\nn\end{align}
Similarly, we let $\A_{N-1}\sql z_N-z_{N-1}\sqr\{w,(w-z_i)^{-1}\}_{1\leq i\leq N-1}$ denote the dg commutative algebra given by 
\begin{align} &\A_{N-1}\sql z_N-z_{N-1}\sqr\{w,(w-z_i)^{-1}\}_{1\leq i\leq N-1}\nn\\ 
  &\coloneq \bigl\{ \omega \in \B_{N-1}((z_N-z_{N-1}))[w,(w-z_i)^{-1}]_{1\leq i\leq N-1}\ox \CC[v,\dd v] \nn\\&\qquad\qquad\qquad 
     \ox \CC[u_\sigma, \dd u_\sigma]_{\sigma \in S_{[1,N-1]\cup\{N+1\}}}\big/ \langle \sum_{\!\!\!\sigma\in S_{[1,N-1]\cup\{N+1\}}\!\!\!} u_\sigma - 1, \sum_{\!\!\!\sigma\in S_{[1,N-1]\cup\{N+1\}}\!\!\!} \dd u_\sigma \rangle \nn\\
  &\qquad:\text{for all distinct $i,j\in [1,N-1]\cup\{N+1\}$,}\nn\\&\qquad\qquad\text{the pullback $\omega|_{\{u_\sigma=0 \forall \sigma\in S_{[1,N-1]\cup\{N+1\}}^{ij}\}}$ is regular in $z_i-z_j$ ,} \nn\\
  &\qquad\quad\text{and the pullbacks $\omega|_{v=0}$ and $\omega|_{v=1}$ are both regular in $z_N-z_{N-1}$} \bigr\}.
\nn\end{align}
It is a commutative algebra in dg $\A_{N-1}\sql z_N-z_{N-1}\sqr$-modules, by an argument similar to that in \cref{lem: AN action on AN+1}.

We again let prime $'$ denote the subalgebras of these algebras consisting of forms $\omega$ such that $\omega\to 0$ as $w\to \8$. 

We obtain corresponding spaces of coinvariants which we denote respectively as
\[ \coinv(\g;\A_N;M_1,\dots,M_{N-1} ) \quad\text{and}\quad \coinv(\g;\A_{N-1}\sql z_N-z_{N-1}\sqr;M_1,\dots,M_{N-1} ). \]
Moreover the algebra map  
\[ \A_N \to \A_{N-1}\sql z_N-z_{N-1}\sqr \nn\]
from \cref{sec: expansion-maps} induces the maps of commutative algebras in dg $\A_{N-1}$-modules shown as horizontal arrows in the diagram above. At the cost of overloading notation somewhat, let us continue to use $\iotacob$ for that change-of-base map.

The diagram above is then a commuting diagram in the category of commutative algebras in dg $\A_{N-1}$-modules. 
In this way, one has the following functoriality property of coinvariants. 

\begin{lem}[Base change commutes with taking coinvariants -- raviolo case]\label{lem: functoriality of raviolo coinvariants}
The following diagram of dg $\A_{N-1}$-modules commutes:
\begin{equation}
\begin{tikzcd} \bigotimes_{i=1}^{N-1}\mathscr M_i \rar\dar & \bigotimes_{i=1}^{N-1} \iotacob \mathscr M_i \dar \\
\coinv (\g;\A_N;M_1,\dots,M_{N-1} )\rar & \coinv(\g;\A_{N-1}\sql z_N-z_{N-1}\sqr;M_1,\dots,M_{N-1} )  
  \end{tikzcd}\nn
\end{equation}
\qed
\end{lem}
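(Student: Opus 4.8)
The plan is to follow the proof of the usual case, \cref{lem: functoriality of coinvariants}, essentially verbatim, reducing everything to the single statement that the change-of-base map $\iotacob=\iota_{z_N\to z_{N-1}}$ intertwines the left-hand and right-hand cospans of \cref{raviolo triples}. Granting that, commutativity of the coinvariants square is formal: both $\coinv(\g;\A_N;M_1,\dots,M_{N-1})$ and $\coinv(\g;\A_{N-1}\sql z_N-z_{N-1}\sqr;M_1,\dots,M_{N-1})$ are relative tensor products of the shape $R\ox_{U_R(\a_-)}\mathscr M$, and the passage from the first to the second is base change along $\iotacob$, i.e. the functor $\A_{N-1}\sql z_N-z_{N-1}\sqr\ox_{\A_N}(-)$. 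Because relative tensor products commute with base change (by associativity of the tensor product), applying $\iotacob$ after taking coinvariants agrees with first changing base on each $\mathscr M_i$ and then taking coinvariants; this is precisely the commutativity asserted, and it propagates from the generators $\bigotimes_i\mathscr M_i$ at the top of the square to the coinvariants at the bottom by naturality of the quotient maps.

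The first substantive step is to check that $\iotacob$ is a well-defined map of dg commutative algebras realizing the horizontal arrows of \cref{raviolo triples}. This is a relabelling of \cref{defprop: iota} (with $N$, $z_N$, $z_{N-1}$ playing the roles of $N+1$, $w$, $z_s$): one expands in small $z_N-z_{N-1}$ with $z_1,\dots,z_{N-1}$ and $w$ held fixed, and simultaneously pulls back the $S_N$-simplex forms to the $S_{[1,N-1]\cup\{N+1\}}$-simplex via the map $p^*_{N\to N-1}$. The verification that this respects the defining boundary conditions of the target algebras (regularity in the various $z_i-z_j$, and in $z_N-z_{N-1}$ after restricting to $v=0,1$) and preserves the primed subalgebras of forms vanishing as $w\to\8$ proceeds exactly as in the proofs of \cref{defprop: iota} and \cref{lem: AN action on AN+1}.

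The crux is then to show that the diagram \cref{raviolo triples} commutes, i.e. that $\iotacob$ is compatible with the cospan embeddings $\iglob$ and $\iloc$. Compatibility with the embeddings of the $+$ parts and with the passage to primed subalgebras is formal. The one point requiring genuine verification is that, for each far site $i\leq N-2$, the change-of-base expansion $\iota_{z_N\to z_{N-1}}$ commutes with the local expansion $\iota_{w\to z_i}$. On the rational coefficients in $\B_{N+1}$ this is identical to the usual case; on the polynomial differential forms it amounts to checking that the simplicial pullbacks $p^*_{N\to N-1}$ and $p^*_{N+1\to i}$ commute on the generators $u_\sigma$, which is exactly the computation carried out for the representative generator $\sum_{\sigma\in S_{N+1}^{N,N+1}}u_\sigma$ in the worked example of \cref{sec: main result}, both composites landing on $\sum_{\sigma\in S_{N-1}^{N-1,i}}u_\sigma$.

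I expect the main obstacle to be purely bookkeeping: keeping straight the four symmetric groups $S_{N+1}$, $S_N$, $S_{N-1}$ and the mixed $S_{[1,N-1]\cup\{N+1\}}$ that index the simplices, and verifying that the maps $p^*$ compose consistently across them. In particular one must confirm that this commutation \emph{fails} at $i=N-1$, matching the familiar non-commutativity of $\iota_{w\to z_{N-1}}$ with $\iota_{z_N\to z_{N-1}}$ that underlies the whole construction. No conceptual ingredient beyond the worked example is needed.
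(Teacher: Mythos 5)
Your overall skeleton matches the paper's: the paper states this lemma with no written proof beyond the assertion that the cospan diagram \cref{raviolo triples} commutes, and your reduction of the lemma to that assertion (plus formal base change of relative tensor products) is the same move. The gap is in what you then claim needs checking. The direct sums in \cref{raviolo triples} run over \emph{all} sites $j=1,\dots,N-1$, so commutativity of that diagram --- equivalently, well-definedness of the bottom horizontal arrow of the lemma's square --- requires the change-of-base map $\iotacob$ to be compatible with the local expansion $\iota_{w\to z_j}$ at \emph{every} site, including $j=N-1$. You assert that only the far sites $i\leq N-2$ require genuine verification, and that the commutation ``fails'' at $i=N-1$, calling this failure a consistency check. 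That conflates two different statements. The familiar non-commutativity of $\iota_{w\to z_{N-1}}$ and $\iota_{z_N\to z_{N-1}}$ occurs on elements with poles at $w=z_N$, such as $1/(w-z_N)$; such elements are excluded from the top row of \cref{raviolo triples}, whose entries are by definition regular in $w-z_N$. On that subalgebra, compatibility at the near site is precisely what has to be proved --- it is the actual content of the lemma --- and if it failed in the sense you assert, the bottom arrow would not descend to coinvariants and the lemma would be false. (In the classical case, \cref{lem: functoriality of coinvariants}, the analogous compatibility at $j=N-1$ does hold, exactly because regularity in $w-z_N$ forces $z_N$ to enter only through the coefficients; your argument, transposed there, would have the same hole.)

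Moreover, in the raviolo setting this near-site check is not routine, because the simplicial pullbacks $p^*_{N+1\to N-1}$ and $p^*_{N\to N-1}$ genuinely fail to commute (the paper notes this in a footnote), and the failure survives on the constrained subalgebra. Take $N=2$ and $\omega = u_{(123)}u_{(321)}/(w-z_1)$. Every pullback of $\omega$ to a face $\{u_\sigma=0\ \forall\sigma\in S_3^{ij}\}$ vanishes identically, since each such face kills $u_{(123)}$ or $u_{(321)}$; hence $\omega$ lies in $\A_2\{w,(w-z_1)^{-1}\}'$. Yet, writing $u$ for the interval coordinate introduced by $\iotacob$ and $v$ for the one introduced by $\iota_{w\to z_1}$ (as in the paper's worked example), one composite gives $\iota_{w\to z_1}(\omega)=0$, because $p^*_{3\to 1}(u_{(123)})=0$, while the other gives $\iota_{w\to z_1}(\iotacob(\omega)) = u(1-u)v(1-v)/(w-z_1)\neq 0$. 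So the top square of \cref{raviolo triples} does not commute strictly on the nose; what one can and must show instead is that the discrepancy at site $N-1$ lies in the image of the action of the global dg Lie algebra $\g\ox\A_{N-1}\sql z_N-z_{N-1}\sqr\{w,(w-z_i)^{-1}\}'_{1\leq i\leq N-1}$, so that the induced map on coinvariants is well defined even though the algebra-level diagram commutes only modulo such terms. (In the example this holds: the discrepancy is, up to sign, the near-site expansion of the global element $\iotacob(\omega)$, whose far-site expansions vanish, so it acts trivially on coinvariants.) This is the step your proposal leaves entirely unaddressed, and verifying commutation only at the far sites cannot close it.
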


We continue to reserve the notation $ \iota_{z_N\to z_{N-1}}$ for the expansion map, in the sense of \cref{sec: expansion-maps}, which expands in small  $z_N-z_{N-1}$ for fixed $z_1,\dots,z_{N-1}$ \emph{and} $w\equiv z_{N+1}$. 

\subsection{Propagation of vacua in the raviolo case}\label{sec: propagation of vacua raviolo}
When $M_N = \CC$ is the trivial module, there is an isomorphism, of dg $\A_N$-modules, between our initial space of coinvariants with $N$ marked points and one with only $N-1$ marked points:
\begin{equation} \coinv(\g;\A_N;M_1,\dots,M_{N-2},\CC,\CC) \cong \coinv(\g;\A_N;M_1,\dots,M_{N-2},\CC). \nn\end{equation}
One has the equality
\begin{equation}
\bigl[ \atp{z_1}{m_1} \ox \dots \ox \atp{z_{N-2}}{m_{N-2}} \ox \atp{z_{N-1}}{B} \ox \atp{z_N}{\vac} \bigr]
=
\bigl[ \atp{z_1}{m_1} \ox \dots \ox \atp{z_{N-2}}{m_{N-2}} \ox  \atp{z_{N-1}}{B} \bigr],
\label{propagation of vacua raviolo}\end{equation}
which is the \dfn{propagation of vacua} property in the raviolo case.

\subsection{Completion of the proof of \texorpdfstring{\cref{thm: raviolo Y map}}{Theorem 16}}\label{sec: proof: raviolo Y map}
The equality in \cref{propagation of vacua raviolo} establishes the statement of \cref{thm: raviolo Y map} in the special case that $A= \vac$ is the vacuum state.

Next we shall show that for all states $A\in \mathscr V$ in the raviolo vacuum module $\mathscr V$, the class
\begin{equation} \bigl[\atp{z_1}{m_1} \ox \dots \ox \atp{z_{N-2}}{m_{N-2}} \ox \atp{z_{N-1}} B \ox \atp{z_N} A \bigr] \in  \A_N \ox M \nn\end{equation}
has a representative of the form
\begin{equation} \sum_i \bigl[A_i^{(-)} \on (\atp{z_1}{m_1} \ox \dots \ox \atp{z_{N-2}}{m_{N-2}}) \ox A_i^{(+)} \on \atp{z_{N-1}}{B} \ox \atp{z_N} \vac \bigr] \nn\end{equation}
for some finite sum over $i$ and for certain $A_i^{(-)}$ and $A_i^{(+)}$ belonging to $U(\g\ox \CC\sql w-z_N\sqr_-)$. 
Recall the algebra map \[g_N: \A_N\sql w-z_N\sqr_- \to \A_{N+1}'\] from \cref{def: gk}.
When we write $A_i^{(-)} \on (m_1 \ox \dots \ox m_{N-2})$, the action is by definition via the map of dg Lie algebras
\begin{align} \iotafar: \g\ox \CC\sql w-z_N\sqr_- &\into \g \ox \A_N\sql w-z_N\sqr_- \nn\\
    &\xrightarrow{g_N} \g \ox \A_{N+1}' \xrightarrow{(\iota_{w\to z_1},\dots,\iota_{w\to z_{N-2}})} \bigoplus_{i=1}^{N-2} \g \ox \A_N\sql w-z_i\sqr_+,\nn\end{align}
which we continue to call $\iotafar$. Similarly when we write $A_i^{(+)} \on B$, the action is by definition via the map of dg Lie algebras
\begin{align} \iotanear: \g\ox \CC\sql w-z_N\sqr_- &\into \g \ox \A_N\sql w-z_N\sqr_- \nn\\
  &\xrightarrow{g_N} \g \ox \A_{N+1}' \xrightarrow{\iota_{w\to z_{N-1}}} \g \ox \A_N\sql w-z_{N-1}\sqr_+.\nn\end{align}
Indeed, we may suppose
\[ A = X^1 \dotsm X^n \vac\nn\]
for some number $n\geq 0$ of elements $X^i \in \g \ox \CC\sql w-z_N\sqr_-$, $1\leq i\leq n$. By a straightforward induction on $n$, one checks that
\begin{align}& \bigl[\atp{z_1}{m_1} \ox \dots \ox \atp{z_{N-2}}{m_{N-2}} \ox \atp{z_{N-1}} B \ox \atp{z_N} A \bigr] \nn\\
&= \sum_{m=0}^n   \sum_{(\mu,\nu) \in \Unshf m n} (-1)^{n+\chi}  \bigl[ (\iotafar \overleftarrow A_\mu) (\atp{z_1}{m_1} \ox \dots \ox \atp{z_{N-2}}{m_{N-2}}) \ox (\iotanear\overleftarrow A_\nu) \!\!\atp{z_{N-1}}{B} \ox \atp{z_N} Y\vac \bigr] \label{swapping raviolo}
\end{align}
where the inner sum is over unshuffles, as defined above after \eqref{swapping}, and where we write
\begin{equation} \overleftarrow A_\mu \coloneq X^{\mu_m} \dotsm X^{\mu_1} ,\qquad
    \overleftarrow A_\nu \coloneq X^{\nu_{n-m}} \dotsm X^{\nu_1} .\nn\end{equation}
 In the expression above $(-1)^\chi$ denotes the appropriate Koszul sign coming from the braiding of the tensor product; it is implicitly a function of the grades of the factors $X^i$ and of the states $m_j$ (all of which without loss of generality we shall assume are homogeneous) and on the unshuffle $(\mu,\nu)$. We don't need to work it out explicitly at this stage -- many of the terms will cancel out in the next swapping step below; in particular the dependence on the $|m_j|$ will drop out.

By propagation of vacua as in \cref{propagation of vacua raviolo}, we may regard the right-hand side in \cref{swapping raviolo} as an element of the space of coinvariants $\coinv(\g;\A_N;M_1,\dots,M_{N-2},\CC,\CC)$. We may apply the change-of-base map $\iotacob$ to both sides to obtain the equality
\begin{align}& \iotacob  \bigl[\atp{z_1}{m_1} \ox \dots \ox \atp{z_{N-2}}{m_{N-2}} \ox \atp{z_{N-1}} B \ox \atp{z_N} A \bigr] \label{iotadswapping raviolo}\\
&=  \sum_{m=0}^n   \sum_{(\mu,\nu) \in \Unshf m n} (-1)^{n+\chi} \iotacob\bigl[(\iotafar\overleftarrow A_\mu) (\atp{z_1}{m_1} \ox \dots \ox \atp{z_{N-2}}{m_{N-2}}) \ox ( \iotanear\overleftarrow A_\nu) \atp{\!\!\!z_{N-1}\!\!\!}{B}  \bigr]\nn\\
&=  \sum_{m=0}^n   \sum_{(\mu,\nu) \in \Unshf m n} (-1)^{n+\chi} \bigl[\iotacob(\iotafar\overleftarrow A_\mu) (\atp{z_1}{m_1} \ox \dots \ox \atp{z_{N-2}}{m_{N-2}}) \ox \iotacob(\iotanear\overleftarrow A_\nu) \atp{\!\!\!z_{N-1}\!\!\!}{B}  \bigr]\nn\\
&=  \sum_{m=0}^n   \sum_{(\mu,\nu) \in \Unshf m n} \hskip-16pt (-1)^{n+\chi}  \bigl[(\iotacob\iotafar\overleftarrow A_\mu) \iotacob(\atp{z_1}{m_1} \ox \dots \ox \atp{z_{N-2}}{m_{N-2}}) \ox \iotacob(\iotanear\overleftarrow A_\nu) \atp{\!\!\!z_{N-1}\!\!\!}{B}  \bigr]\nn.
\end{align}
In the second step, we used \cref{lem: functoriality of raviolo coinvariants}. 

The space of coinvariants $\coinv(\g, \A_{N-1}\sql z_N-z_{N-1}, M_1,\dots,M_{N-2},\CC,\CC)$ is a quotient of the free module $\A_{N-1}\sql z_N-z_{N-1}\sqr \ox M$, cf. \cref{def: coinv}. Consider a representative in that free module. An element of $\A_{N-1}\sql z_N-z_{N-1}\sqr \ox M$ is by definition an element of 
\[ \A_{N-1}((z_N-z_{N-1})) \ox \CC[v,\dd v] \ox M \]
whose pullbacks to $v=0$ and $v=1$ are regular in $z_N-z_{N-1}$. So it is a Laurent series in $z_N-z_{N-1}$ (albeit one obeying certain extra conditions), and thus to specify it it is enough to give the coefficient of $(z_N-z_{N-1})^k$ for every $k\in \ZZ$.
So, let $k\in \ZZ$ and consider the coefficent of $(z_N-z_{N-1})^k$ in the expression in \cref{iotadswapping raviolo}. Consider any term $(\mu,\nu)$ in the sum. 
We still have\footnote{In more detail: it is still the case that $\iota_{w\to z_s}$ and $\iota_{z_N \to z_{N-1}}$ commute whenever $s\leq N-2$, where these are now the expansion maps defined as in \cref{defprop: iota}. 
Indeed, one checks that the maps $p_{N+1\to s}^*$ and $p_{N\to N-1}^*$ (the latter defined by obvious analogy with the former) commute for all $s\leq N-2$. It is interesting to note also that they do \emph{not} commute for $s=N-1$.}
\[ \iotacob \iotafar \overleftarrow A_\mu = \iota_{z_N\to z_{N-1}} \iotafar \overleftarrow A_\mu =  \iotafar \iota_{z_N\to z_{N-1}} \overleftarrow A_\mu. \]

By smoothness of the vacuum module $\mathscr V$, we have that
\[ \iotanear \overleftarrow A_\nu B \in \A_{N-1}\sql z_N-z_{N-1}\sqr_- \ox \mathscr V . \]
(To stress the point: on grading grounds, at most finitely many terms in the series $\iotanear X\in \g \ox \A_N[[w-z_{N-1}]]$ are nonzero when acting on $B$, for any given state $B\in \mathscr V$ and $X\in \g \ox \CC\sql w-z_N\sqr$. It follows that there is a lower bound on the powers of $z_N-z_{N-1}$ that appear in $\iotanear \overleftarrow A_\nu B$. This logic is exactly as in the usual case discussed in \cref{sec: proof: Y map}.)

Therefore only finitely many terms in the series $\iota_{z_N\to z_{N-1}} \overleftarrow A_\mu$ contribute to the overall coefficient of $(z_N-z_{N-1})^k$.  The coefficients of these finitely many terms belong to $U(\g \ox \A_{N-1}\sql z_N-z_{N-1}\sqr_-)$, and we can swap them over to the module at the marked point $z_{N-1}$, by definition of the space of coinvariants. After doing so we obtain 
\begin{equation}  (-1)^{|A||B|}\bigl[\atp{z_1}{m_1} \ox \dots \ox \atp{z_{N-2}}{m_{N-2}} \ox 
Y_\Rav(A, z_N-z_{N-1})  \atp{\!\!\!z_{N-1}\!\!\!}{B} \bigr]\nn
\end{equation}
where we recognize the expression for the raviolo state-field map $Y_\Rav$ from \cref{lem: state-field recursion}.
This completes the proof of \cref{thm: raviolo Y map}.

\appendix
\section{Semisimplicial objects and the Thom-Sullivan functor}\label{sec: review of thom-sullivan}
\subsection{Semisimplicial objects}\label{sec: simplicial sets}
Let $\Delta$ denote the category whose objects are the finite nonempty totally-ordered sets 
\begin{equation} [n] := \{0<1<\dots<n\}, \qquad n\in \ZZ_{\geq 0}, \nn\end{equation}
and whose morphisms are the strictly 
order-preserving maps $\theta: [n] \to [N]$. 
Such maps are generated by \dfn{coface maps},
\begin{equation} d_j : [n] \to [n+1];\quad i\mapsto \begin{cases} i & i<j \\ i+1 & i\geq j \end{cases} \quad\text{for}\quad j=0,1,\dots,n+1.\nn\end{equation}
A \dfn{semicosimplicial object} $A$ in a category $\C$ is a functor $A: \Delta \to \C$.
Similarly,
a \dfn{semisimplicial object} $Z$ in a category $\C$ is a functor $Z: \Delta^\op\to \C$.   
The maps $\del^n_i := Z(d^n_i) : Z([n+1]) \to Z([n])$ are the \dfn{face maps} of $Z$. 
One thinks of the category $\Delta$ as follows:
\be 
\begin{tikzcd} 
\dotsb {[2]}
\rar[<-,shift left=-4pt]\rar[<-,shift left=4pt]\rar[<-] & 
{[1]} \rar[<-,shift left=2pt]\rar[<-,shift right=2pt]& {[0]}
\end{tikzcd}
\nn\ee
and so a semisimplicial object $Z$ in $\C$ defines a diagram in $\C$ of the form
\be
\begin{tikzcd}
\dotsb Z([2]) \rar[->,shift left=-4pt]\rar[->,shift left=4pt]\rar[->] &
Z([1]) \rar[->,shift left=2pt]\rar[->,shift right=2pt]& Z([0]).
\end{tikzcd}
\nn\ee

\subsection{Polynomial differential forms on the standard algebro-geometric simplex}\label{sec: omega}
There is a simplicial dg commutative algebra 
\begin{equation} \Omega:\Delta^\op \to \dgCAlg \nn\end{equation} 
defined as follows. For each $n\geq 0$, $\Omega([n])$ is the dg commutative algebra
\begin{equation} \Omega([n]) :=  \CC[t_0,\dots,t_n; \dd t_0, \dots \dd t_n]\big/\langle \sum_{i=0}^n t_i -1, \sum_{i=0}^n \dd t_i \rangle \nn\end{equation} 
with $t_i$ in degree $0$ and $\dd t_i$ in degree $1$, for each $i$, and equipped with the usual de Rham differential.  
For any map $\phi: [n] \to [N]$ of $\Delta$, 
\begin{equation} \Omega(\phi) : \Omega([N]) \to \Omega([n]) \nn\end{equation}
is the map of dg commutative algebras defined by $t_i \mapsto \sum_{j\in \phi^{-1}(i)} t_j$.
One should think of $\Omega([n])$ as the complex of polynomial differential forms on the \dfn{standard algebro-geometric $n$-simplex},
\begin{equation} \Delta_\CC^n := \Spec \CC[t_0,\dots,t_n]/\langle \sum_{i=0}^n t_i -1 \rangle \into \AA_\CC^{n+1}.
\nn\end{equation}

\subsection{The functor \texorpdfstring{$\Th$}{Th}}\label{sec: Thom-Sullivan functor}
Suppose we are given a functor $A : \Delta \to \CAlg(\Vect_\CC)$; that is, suppose we are given a semicosimplicial object in commutative algebras in vector spaces. One can construct a commutative algebra in dg vector spaces, given by the graded vector space
\begin{align} \Th^\bul(A)  &:= \biggl\{ \mathbf a = (a_n)_{n\geq 0 } \in \prod_{n\geq 0} A([n]) \ox \Omega^\bul([n]) :\nn\\&\qquad\qquad\qquad
\left(A(\phi) \ox \id \ox\right) a_n
= \left(\id\ox\Omega(\phi)\right) a_m \quad \text{in}\quad \Omega([n]) \ox A([m])  \nn\\&\qquad\qquad\qquad
\qquad\text{for all maps $\phi:[n] \to [m]$ of $\Delta$} \biggr\},\label{def: Thom-Sullivan functor}
\end{align}
equipped with the de Rham differential $\dd = \dd_{\text{de Rham}} \ox \id$ and the graded commutative product given by $(\omega \ox a)( \tau \ox b) :=  \omega \wedge \tau \ox ab$.
This defines the action on objects of a functor, called the \dfn{Thom-Sullivan} \cite{HS} or \dfn{Thom-Whitney} \cite{FMM}  functor, from semicosimplicial commutative algebras to dg commutative algebras,
\begin{equation} \Th:[\Delta, \CAlg(\Vect_\CC)] \to \dgCAlg. \nn\end{equation}
There is a quasi-isomorphism of dg vector spaces $\int:\Th^\bul(A) \xrightarrow\sim \C^\bul(A)$ to the unnormalized cochain complex $\C^\bul(A)$ associated $A$, namely the complex with $C^n(A) := A([n])$ for $n\geq 0$, $C^n(A) = 0$ for $n<0$, and differential $\dd_\C^n := \sum_{j=0}^{n+1} (-1)^j d_j$. (This quasi-isomorphism is defined by integrating over the simplices; see \cite[\S5.2.6]{HS}.) 

(By suitably totalizing, the definition $\Th^\bul$ extends to a functor from semicosimplicial dg commutative algebras to dg commutative algebras, which is how $\Th^\bul$ is more commonly presented; but the semicosimplicial commutative algebras we encounter in the present paper are all concentrated in degree $0$, so \cref{def: Thom-Sullivan functor} suffices for our purposes.)

\printbibliography

\end{document}

